\newtheorem{theorem}{Theorem}[section]
\newtheorem{definition}[theorem]{Definition}
\newtheorem{lemma}[theorem]{Lemma}
\newtheorem{proposition}[theorem]{Proposition}
\newtheorem{corollary}[theorem]{Corollary}
\newdimen\bibspace
\renewenvironment{thebibliography}[1]{%
 \section*{\refname 
       \@mkboth{\MakeUppercase\refname}{\MakeUppercase\refname}}%
     \list{\@biblabel{\@arabic\c@enumiv}}%
          {\settowidth\labelwidth{\@biblabel{#1}}%
           \leftmargin\labelwidth
           \advance\leftmargin\labelsep
           \itemsep\bibspace
           \parsep\z@skip     %
           \@openbib@code
           \usecounter{enumiv}%
           \let\p@enumiv\@empty
           \renewcommand\theenumiv{\@arabic\c@enumiv}}%
     \sloppy\clubpenalty4000\widowpenalty4000%
     \sfcode`\.\@m}
    {\def\@noitemerr
      {\@latex@warning{Empty `thebibliography' environment}}%
     \endlist}
           \newcommand{\ud}{\mathrm{d}}
\newcommand{\be}{\begin{equation}}      \newcommand{\ee}{\end{equation}}
\begin{document}

\title{Compactness of solutions to higher order elliptic equations}

\author{{\medskip Miaomiao Niu\thanks{M. Niu is supported by National Science Foundation of China (11901020), Beijing National Science Foundation (1204026) and Science and technology Project of Beijing Municipal Commission of Education China (KM202010005027).},\ \  Zhongwei Tang\thanks{Z. Tang is supported by National Science Foundation of China (12071036).},\ \  Ning Zhou} }

\date{}

\maketitle

\begin{abstract}
We use blow up analysis for local integral equations to prove compactness of solutions to higher order critical elliptic equations provided the potentials only have non-degenerate zeros. Secondly, corresponding to Schoen's Weyl tensor vanishing conjecture for the Yamabe equation on manifolds, we establish a Laplacian vanishing rate of the potentials at blow up points of solutions.
\end{abstract}
{\bf Key words:} Higher order equations, Compactness, Blow up analysis.

{\noindent\bf Mathematics Subject Classification (2010)}\quad 35B44 · 35G20 · 35J61

\section{Introduction}

This paper aims to study the compactness of solutions for the higher order critical elliptic equations
\be\label{eq:sigmaSchrodinger}
(-\Delta)^{\sigma} u-a(x)u=u^{\frac{n+2 \sigma}{n-2 \sigma}} \quad \text { in }\, B_{3},
\ee
where $1 \leq \sigma<n/2$ is an integer, the potential $a(x)$ is nonnegative and smooth in $B_3$, $n \geq 4\sigma$. We also prove that the Laplacian of $a(x)$ has to vanish at possible blow up points of a sequence of blowing up solutions. 

When $\sigma=1$ and the equation is defined in $\mathbb{R}^n$, \eqref{eq:sigmaSchrodinger} becomes the semilinear Schr\"odinger equation. If $a(x)\equiv \lambda$, where $\lambda$ is a positive constant, \eqref{eq:sigmaSchrodinger} equipped with the Dirichlet boundary conditions is called the Br\'ezis-Nirenberg problem. Compactness of finite energy changing-signs solutions of Br\'ezis-Nirenberg problem was established in dimensions $n>6$ by Devillanova-Solimini \cite{DSConcentration2002}. For the fractional case $0<\sigma<1$, compactness of solutions of \eqref{eq:sigmaSchrodinger} has been studied by Niu-Peng-Xiong \cite{NPXCompactness2018} and compactness of solutions of fractional Br\'ezis-Nirenberg problem has been studied by Yan-Yang-Yu \cite{YYYEquations2015}.

If $a(x)\equiv0$ and the equation is defined in $\mathbb{R}^n$, \eqref{eq:sigmaSchrodinger} becomes
\be\label{eq:a==0}
(-\Delta)^{\sigma} u=u^{\frac{n+2 \sigma}{n-2 \sigma}} \quad \text { in }\, \mathbb{R}^n,
\ee
the compactness of solutions of \eqref{eq:a==0} fails for $0<\sigma<n/2$. Indeed, by the classification theorem of the solutions of \eqref{eq:a==0} (see, e.g. \cite{CGSAsymptotic1989}, \cite{LA1998}, \cite{WXClassification1999}, \cite{CLOClassification2006}, \cite{JLXOnI2014}), the positive solutions of \eqref{eq:a==0} are the form
$$
u_{x_0, \lambda}(x)=c(n,\sigma)\Big(\frac{\lambda}{1+\lambda^2|x-x_0|^2}\Big)^{\frac{n-2\sigma}{2}},
$$
where $c(n,\sigma)$ is a constant depending of $n$ and $\sigma$, $\lambda>0$ and $x_0\in \mathbb{R}^n$. Clearly, $u_{x_0, \lambda}(x_0)\to +\infty$ as $\lambda\to +\infty$.

The compactness of solutions to the Nirenberg problems has been studied extensively. In this case, the right-hand side of Eq. \eqref{eq:a==0} becomes $K(x)u^{\frac{n+2\sigma}{n-2\sigma}}$. Let $K(x)$ satisfies some flatness assumptions. For $\sigma=1$, $n=3$, the compactness of solutions to the Nirenberg problems was proved in Chang-Gursky-Yang \cite{CGYThe1993} and Schoen-Zhang \cite{SZPrescribed1996}. For $\sigma=1$, $n \geq 4$, it was proved by Li \cite{LPrescribing1995}. For $\sigma=2$ and $n\geq5$, it was proved by Djadli-Malchiodi-Ahmedou \cite{DMAPrescribingI2002, DMAPrescribingII2002} and Felli \cite{FExistence2002}. For $\sigma \in(0,1)$, it was proved by Jin-Li-Xiong \cite{JLXOnI2014, JLXOnII2015}. For $\sigma \in(0,n/2)$, it was proved by Jin-Li-Xiong \cite{JLXThe2017}.

The first main result of the paper is as follows.

\begin{theorem}\label{thm:1.1}
Suppose that $1 \leq \sigma<n / 2$ and $\sigma$ is an integer. Let $u \in C^{2 \sigma}(B_{3})$ be a nonnegative solution of
\be\label{eq:Differential}
(-\Delta)^{\sigma} u-a(x)u=u^{\frac{n+2 \sigma}{n-2 \sigma}} \quad \text { in }\, B_{3},
\ee
where $a(x)$ is a nonegative smooth function in $B_3$ and $n\geq 4\sigma$. Suppose
\be\label{eq:sign conditions}
(-\Delta)^{k} u \geq 0 \quad \text { in }\, B_3, \quad k=1, \cdots, \sigma-1.
\ee
If either
\begin{enumerate}[(i)]
  \item $a>0$ in $B_{2}$, or
  \item $\Delta a>0$ on $\{x: a(x)=0\} \cap B_{2}$ and $n \geq 4 \sigma+2$
\end{enumerate}
holds, then
$$
\|u\|_{C^{2\sigma}(B_{1})} \leq C,
$$
where $C>0$ depends only on $n, \sigma,\|a\|_{C^{4}(B_{3})}$ and $\inf _{B_{2}} a$ if $(i)$ holds, otherwise it depends only on $n, \sigma,\|a\|_{C^{4}(B_{3})}$ and $\inf _{\{x: a(x)=0\} \cap B_{2}} \Delta a .$
\end{theorem}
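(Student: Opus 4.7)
The plan is to argue by contradiction via a blow-up analysis carried out at the level of a local integral equation. Suppose the conclusion fails; then there exist sequences $\{a_j\}$ (satisfying the hypotheses uniformly) and nonnegative $u_j \in C^{2\sigma}(B_3)$ solving \eqref{eq:Differential} with $\max_{\overline{B}_1} u_j \to \infty$. The first step is to recast \eqref{eq:Differential} locally as an integral equation. Using the sign conditions \eqref{eq:sign conditions}, one iterates the Riesz potential representation on a slightly smaller ball (say $B_{5/2}$) to obtain
\[
u_j(x) = \int_{B_{5/2}} G_\sigma(x,y)\bigl(a_j(y)u_j(y) + u_j(y)^{(n+2\sigma)/(n-2\sigma)}\bigr)\,\ud y + h_j(x),
\]
where the kernel satisfies $G_\sigma(x,y) \asymp |x-y|^{2\sigma - n}$ and $h_j$ is a nonnegative smooth polyharmonic correction uniformly bounded on $B_2$. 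This representation opens the door to the integral-equation blow-up machinery developed in \cite{JLXOnI2014,JLXOnII2015,JLXThe2017} for the fractional setting.

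A standard selection procedure (Schoen; Li; Jin--Li--Xiong) then yields points $x_j \to x_0 \in \overline{B}_{3/2}$ and scales $M_j := u_j(x_j) \to \infty$ such that the rescalings
\[
v_j(y) = M_j^{-1}\,u_j\bigl(x_j + M_j^{-2/(n-2\sigma)} y\bigr)
\]
converge locally uniformly, and in the integral equation, to the standard bubble $U(y) = c(n,\sigma)(1+|y|^2)^{-(n-2\sigma)/2}$; the potential disappears in the limit since $M_j^{-4\sigma/(n-2\sigma)} \to 0$. Combining a Harnack-type inequality for the integral equation with the rapid decay of $U$, one classifies each blow-up sequence as isolated simple and establishes the exterior pointwise control
\[
u_j(x) \leq C\,M_j^{-1}\,|x-x_j|^{2\sigma - n}\qquad \text{for } |x-x_j| \leq r_0.
\]

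The contradiction is then extracted from a Pohozaev-type identity for the integral equation, applied on a fixed small ball $B_\rho(x_j)$ and tested against the dilation generator $(x-x_j)\cdot\nabla u_j + \tfrac{n-2\sigma}{2} u_j$. The exterior decay forces all boundary contributions to have size $O(M_j^{-2})$; the critical nonlinearity contributes only boundary terms; and the potential produces a bulk term which, after Taylor expanding $a_j$ near $x_0$ (using $\nabla a_j(x_0) = 0$ in case (ii) since $x_0$ is a zero of a nonnegative function), behaves to leading order like a positive multiple of $a(x_0)\,M_j^{-4\sigma/(n-2\sigma)}$ in case (i), and of $\Delta a(x_0)\,M_j^{-(4\sigma+4)/(n-2\sigma)}$ in case (ii). In case (i), for $n > 4\sigma$ this bulk term strictly dominates the boundary (with the endpoint $n = 4\sigma$ handled by a logarithmic refinement coming from the divergence of $\int U^2$), and the positivity $a > 0$ cannot be balanced. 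In case (ii), the assumption $n \geq 4\sigma + 2$ keeps the moment $\int_{\R^n} |y|^2 U(y)^2\,\ud y$ convergent (logarithmically divergent at $n = 4\sigma + 2$), so the bulk term is again at least of the boundary's order, and its sign, forced to be positive by $\Delta a > 0$ on $\{a=0\}$, is incompatible with the identity.

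The main obstacle is developing the Pohozaev identity directly on the integral equation, since only the nonnegativity, not a pointwise bound, is assumed on the intermediate Laplacians $(-\Delta)^k u_j$: every integration by parts must be executed through the kernel $G_\sigma$ with careful tracking of boundary remainders. A secondary difficulty is ruling out accumulating or interacting blow-up configurations, which is handled by iterating the selection procedure and using the exterior bound to show the bubbles are mutually non-interacting at leading order; a third is the logarithmic bookkeeping at the borderline dimensions $n = 4\sigma$ in (i) and $n = 4\sigma + 2$ in (ii).
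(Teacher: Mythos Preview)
Your proposal is correct and follows essentially the same route as the paper: reduce to the integral equation via the Green representation and the sign conditions \eqref{eq:sign conditions} (the paper's Proposition~\ref{pro:Dif-Int}), run the blow-up selection and isolated/isolated-simple analysis for the integral equation (Sections~3--5), and derive the contradiction from the Pohozaev identity (Proposition~\ref{pro:Pohozaev} and Proposition~\ref{pro:Estimates1}). One small imprecision: the remainder $h_j$ in the integral representation is \emph{not} uniformly bounded a priori (it inherits boundary data of $u_j$); what is actually used---and what the paper verifies---is that $h_j$ is positive, smooth, and satisfies the scale-invariant Harnack-type condition \eqref{eq:hsatisfy}, which suffices throughout the blow-up analysis.
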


By Green's representation, we can write the differential equations \eqref{eq:Differential} into the integral equation \eqref{eq:Integral} below, and we will work in the integral equation setting. This is inspired by a unified approach to the Nirenberg problem and its generalizations studied by Jin-Li-Xiong in \cite{JLXThe2017}, see also \cite{LXCompactness2019} and \cite{JXAsymptotic2021}.

Assume that the dimension $n \geq 1$ and $1\leq\sigma<n / 2$ is a real number. We consider the local integral equation involving the Riesz potential

\begin{equation}\label{eq:Integral}
u(x)=\int_{B_3} \frac{a(y) u(y)+u(y)^{\frac{n+2 \sigma}{n-2 \sigma}}}{|x-y|^{n-2\sigma}}\,\ud y+h(x),\,\quad u \geq 0 \quad \text { in }\, B_{3},
\end{equation}
where $0\leq h\in C^{\infty}(B_3)$ satisfy
\be\label{eq:hsatisfy}
\max _{\overline{B}_{2}} h \leq A_2 \min _{\overline{B}_{2}} h \, \text{ and }\, \sum_{j=1}^{3}r^j|\nabla^j h(x)|\leq A_2\|h\|_{L^{\infty}(B_r(x))}
\ee
for all $x\in B_2$, $0<r<1$, and some positive constant $A_2$. Under the assumptions of Theorem \ref{thm:1.1}, one can rewrite the equation \eqref{eq:Differential} as \eqref{eq:Integral} (after some scaling, see details in Section 2). Note that if $u\in C^{2\sigma}(B_3)$ is a solution of \eqref{eq:Integral}, then one can verify that the function $\tilde{h}:=\int_{B_3\backslash B_1} (a(y) u(y)+u(y)^{\frac{n+2 \sigma}{n-2 \sigma}})|x-y|^{2\sigma-n}\,\ud y$ is smooth and nonegative in $B_1$. Thus, one always can consider the equation \eqref{eq:Integral} in a smaller ball with the same assumptions. For simplicity, in the whole paper, we use $C^{\alpha}(B_3)$ to denote $C^{[\alpha], \alpha-[\alpha]}(B_3)$.

Next we state the corresponding result for the integral equation \eqref{eq:Integral}.

\begin{theorem}\label{thm:1.2}
Let $u \in C^{2\sigma}(B_{3})$ be a solution of \eqref{eq:Integral} with $a \geq 0$ and $n \geq 4 \sigma .$ If either
\begin{enumerate}[(i)]
  \item $a>0$ in $B_{2}$, or
  \item $\Delta a>0$ on $\{x: a(x)=0\} \cap B_{2}$ and $n \geq 4 \sigma+2$
\end{enumerate}
holds, then
$$
\|u\|_{C^{2\sigma}(B_{1})} \leq C,
$$
where $C>0$ depends only on $n, \sigma,\|a\|_{C^{4}(B_{3})}$ and $\inf _{B_{2}} a$ if $(i)$ holds, otherwise it depends only on $n, \sigma,\|a\|_{C^{4}(B_{3})}$ and $\inf _{\{x: a(x)=0\} \cap B_{2}} \Delta a .$
\end{theorem}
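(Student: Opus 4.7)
\medskip
\noindent\textbf{Proof proposal.} The strategy is blow-up analysis by contradiction. Assume the conclusion fails; then there is a sequence of solutions $u_i \in C^{2\sigma}(B_3)$ of \eqref{eq:Integral}, with data $(a_i, h_i)$ satisfying the hypotheses and uniform bounds, such that $\|u_i\|_{L^\infty(B_1)} \to \infty$. Along a subsequence $a_i \to a$ in $C^3$ and $h_i \to h$. Following the blow-up machinery developed for local integral equations in Jin--Li--Xiong \cite{JLXThe2017} and adapted to the potential setting in Niu--Peng--Xiong \cite{NPXCompactness2018} and Li--Xiong \cite{LXCompactness2019}, I would select local maxima $x_i$ at which $M_i := u_i(x_i) \to \infty$ and, after passing to a subsequence, extract a finite blow-up set $\{\bar x^{(1)}, \ldots, \bar x^{(N)}\} \subset \overline{B}_2$ around which the rescalings
\[
v_i(y) := M_i^{-1} u_i\bigl(x_i + M_i^{-2/(n-2\sigma)} y\bigr)
\]
converge in $C^{2\sigma}_{\mathrm{loc}}(\mathbb{R}^n)$ to the standard bubble $U(y) = c(n,\sigma)(1+|y|^2)^{-(n-2\sigma)/2}$. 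The potential term $a(x)u$ is subcritical under this scaling, so it does not affect the limit.

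The second step is to refine the blow-up picture to \emph{isolated simple} blow-up points. A standard selection argument, a Harnack-type inequality for the integral equation, and the control \eqref{eq:hsatisfy} on $h$ together with $a \ge 0$ force distinct blow-up points to remain bounded apart and imply that near each $\bar x^{(k)}$ the solution $u_i$ has only one scale. The decisive quantitative output is the two-sided bound
\[
C^{-1} M_i \bigl(1 + M_i^{4/(n-2\sigma)} |x-x_i|^2\bigr)^{-(n-2\sigma)/2} \le u_i(x) \le C M_i^{-1} |x-x_i|^{-(n-2\sigma)}
\]
valid in a fixed neighborhood of $\bar x^{(k)}$; the outer upper bound is obtained by an iterated Moser-type estimate on \eqref{eq:Integral} exploiting the decay of the Riesz kernel.

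The third and decisive step is a local Pohozaev-type identity for \eqref{eq:Integral}, derived along the lines of \cite{JLXThe2017} by pairing the equation against $\bigl((x-\bar x)\cdot\nabla + \tfrac{n-2\sigma}{2}\bigr) u_i$ and exploiting the Riesz potential structure. For a small fixed $r > 0$ and $\bar x = \bar x^{(k)}$ it schematically reads
\[
\int_{B_r(\bar x)} \Bigl(a(x) + \tfrac{1}{2}(x-\bar x)\cdot \nabla a(x)\Bigr) u_i^2\, dx = \mathcal{P}(u_i, r) + o(M_i^{-2}),
\]
where the boundary functional $\mathcal{P}(u_i, r)$ is of definite sign and, by step two, converges to a positive universal constant times $M_i^{-2}$. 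Taylor-expanding $a$ at $\bar x$ (using $\nabla a(\bar x) = 0$ when $a(\bar x) = 0$, which holds because $a \ge 0$ near $\bar x$) and inserting the two-sided bound yields
\[
\int_{B_r(\bar x)} a(x) u_i^2 \, dx \sim c_1\, a(\bar x)\, M_i^{-4\sigma/(n-2\sigma)}(\log M_i)^{\varepsilon} + c_2\, \Delta a(\bar x)\, M_i^{-(4\sigma+4)/(n-2\sigma)}(\log M_i)^{\varepsilon'},
\]
with $c_1, c_2 > 0$ explicit, $\varepsilon = 1$ iff $n = 4\sigma$, and $\varepsilon' = 1$ iff $n = 4\sigma + 2$. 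Comparing orders against $M_i^{-2}$ gives the contradiction: for $n \ge 4\sigma$ the first term beats $M_i^{-2}$ unless $a(\bar x) = 0$, contradicting (i); for $n \ge 4\sigma + 2$ and $a(\bar x) = 0$, the second term beats $M_i^{-2}$ unless $\Delta a(\bar x) \le 0$, contradicting (ii).

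The main obstacle I anticipate is twofold: deriving and controlling the errors in a local Pohozaev identity directly at the level of the integral equation \eqref{eq:Integral} (since one lacks pointwise access to $(-\Delta)^\sigma u_i$, and the nonlocal Riesz kernel couples interior and exterior contributions), and balancing the critical integrals $\int u_i^2$ and $\int |x-\bar x|^2 u_i^2$ against the boundary functional precisely at the thresholds $n = 4\sigma$ and $n = 4\sigma + 2$, where the log-divergent $L^2$ and weighted $L^2$ norms of the bubble are exactly what determine the dimension restrictions stated in the theorem.
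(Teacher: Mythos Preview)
Your overall strategy---contradiction, blow-up selection, reduction to isolated simple blow-up, sharp pointwise bounds, and a Pohozaev identity whose potential term $\int (\sigma a + \tfrac12 (x-\bar x)\cdot\nabla a)u_i^2$ dominates the $O(M_i^{-2})$ remainder exactly when $n\ge 4\sigma$ (for $a(\bar x)>0$) or $n\ge 4\sigma+2$ (for $\Delta a(\bar x)>0$)---matches the paper's proof, and your dimensional counting is correct.

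The gap is in your second step. You write that separation of blow-up points and the reduction to \emph{isolated simple} blow-up follow from ``a Harnack-type inequality \ldots\ together with $a\ge 0$,'' and that the sharp upper bound $u_i(x)\le CM_i^{-1}|x-x_i|^{2\sigma-n}$ then comes from ``an iterated Moser-type estimate.'' Neither is true without the hypotheses (i) or (ii). The sharp decay is only available \emph{after} one knows the blow-up is isolated simple (see the paper's Proposition~\ref{pro:Blowup6}, whose proof uses the monotonicity of $r^{(n-2\sigma)/2}\bar u_i(r)$); for a merely isolated blow-up one has only the weak bound $u_i\le CM_i^{-\lambda_i}|x-x_i|^{2\sigma-n+\delta_i}$ with $\delta_i>0$. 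And the passage from isolated to isolated simple is itself proved by a Pohozaev-sign argument at the rescaled second critical radius (paper's Proposition~\ref{pro:Estimates2}), which already requires $\beta_i:=\sigma a_i(x_i)M_i^{\frac{2(n-4\sigma)}{n-2\sigma}}+\tfrac{\sigma+1}{2n}\Delta a_i(x_i)M_i^{\frac{2(n-4\sigma-2)}{n-2\sigma}}\ge 0$, i.e.\ precisely hypotheses (i)/(ii). The same mechanism is needed again to rule out accumulation of distinct blow-up points (paper's Proposition~\ref{pro:Proof2}). So the positivity assumptions on $a$ or $\Delta a$ enter the argument \emph{three} times---separation, isolated $\Rightarrow$ isolated simple, and the final contradiction---not only at the end; your sketch collapses the first two into routine steps where they are not.

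A minor point: what you call the ``boundary functional $\mathcal P(u_i,r)$'' is in the paper's identity the pairing of $a_iu_i+u_i^{p_i}$ against the remainder $b_i$ and $\nabla b_i$ (not a boundary term), and the relevant fact at the final step is merely that $M_i^2\mathcal P_\sigma$ stays bounded, not that it converges to a specific positive constant.
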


In view of Proposition \ref{pro:Dif-Int}, Theorem \ref{thm:1.1} follows from Theorem \ref{thm:1.2}.

The Yamabe equation on Riemannian manifolds have been studied very extensively, whose potential is the scalar curvature multiplied by a constant. Compactness and blow-up phenomenon of solutions to the Yamabe equation have been well understood. A conjecture due to Schoen asserts that the Weyl tensor should vanish up to $[\frac{n-6}{2}]$-th order derivatives at a blow up point, where $n$ is the dimension of manifolds. This conjecture has been verified in dimensions 6 and 7 by Marques \cite{MA2005} and, independently, in dimensions 6-11 by Li-Zhang \cite{LZCompactnessII2005,LZCompactnessIII2007}. M. Khuri, F. Marques, and R. Schoen \cite{KMSA2009} proved this conjecture up to dimension 24. If $n \geq 25$, a counterexample was obtained by Marques \cite{MBlow2009}. Consequently, the set of solutions of the Yamabe equation is compact in $C^{2}$ if the Weyl tensor or some of its derivatives of order $\leq[\frac{n-6}{2}]$ does not vanish everywhere in dimension less than 24. If the Weyl tensor does not vanish everywhere, compactness was proved in all dimensions $n \geq 6$ by \cite{LZCompactnessII2005,MA2005}. Similar phenomenon has been proved recently by Li-Xiong \cite{LXCompactness2019} for the fourth order $Q$-curvature equation in dimension $n \geq 8$.

Another purpose of paper is to establish an analogue for Yamabe type equations with non-geometric potentials. The second main result in this paper is

\begin{theorem}\label{thm:1.3}
Suppose that $1 \leq \sigma<n / 2$. Let $u_{i} \in C^{2}(B_{3})$, $i=1,2, \cdots$, be a nonegative solution of
\begin{equation}\label{5}
u_i(x)=\int_{B_3} \frac{a_i(y) u_i(y)+u_i(y)^{\frac{n+2 \sigma}{n-2 \sigma}}}{|x-y|^{n-2\sigma}}\,\ud y+h_i(x) \quad \text { in }\, B_{3},
\end{equation}
where $a_{i} \geq 0$, $\|a_{i}\|_{C^{4}(B_{3})} \leq A_{0}$ for some $A_{0}>0$ and $a_{i} \rightarrow a$ in $C^{4}(B_{3})$. Suppose that $\Delta a_{i} \geq 0$ in $\{x: a_{i}(x)<\varepsilon\} \cap B_{2}$ for some $\varepsilon>0$ independent of $i$ and $n \geq 4 \sigma+2 .$ If $x_{i} \rightarrow \bar{x} \in B_{1}$ and $u_{i}(x_{i}) \rightarrow \infty$ as $i \rightarrow \infty$, then $a(\bar{x})=\Delta a(\bar{x})=0 .$ Furthermore,
\begin{enumerate}[(i)]
  \item If $4 \sigma+2 \leq n<4 \sigma+4$, we can find $x_{i}^{\prime} \rightarrow \bar{x}$ such that
  $$
a_{i}(x_{i}^{\prime}) u(x_{i}^{\prime})^{\frac{2}{\sigma+1}} (\ln u_{i}(x_{i}^{\prime}))^{-1}+\Delta a_{i}(x_{i}^{\prime}) \leq C(\ln u_{i}(x_{i}^{\prime}))^{-1}
$$
  for $n=4 \sigma+2$ and
  $$
a_{i}(x_{i}^{\prime}) u(x_{i}^{\prime})^{\frac{4}{n-2 \sigma}}+\Delta a_{i}(x_{i}^{\prime}) \leq C u(x_{i}^{\prime})^{\frac{2(4 \sigma+2-n)}{n-2 \sigma}}
$$
  for $4 \sigma+2<n<4 \sigma+4$, where $C>0$ depends only on $n, \sigma, \varepsilon$ and $A_{0}$.
  \item If $n \geq 4 \sigma+4$, assume that
  \begin{equation}\label{6}
x_{i} \text { is a local maximum point of } u_{i}, \quad \max _{B_{\bar{d}}(x_{i})} u_{i}(x) \leq \bar{b} u_{i}(x_{i})
\end{equation}
  for some positive constants $\bar{b}$ and $\bar{d}$. Then
  $$
a_{i}(x_{i}) u(x_{i})^{\frac{4}{n-2 \sigma}}+\Delta a_{i}(x_{i}) \leq C\left\{\begin{array}{ll}
u_{i}(x_{i})^{-\frac{4 }{n-2 \sigma}} \ln u_{i}(x_{i}) & \text { for }\, n=4 \sigma+4, \\
u_{i}(x_{i})^{-\frac{4 }{n-2 \sigma}} & \text { for }\, n>4 \sigma+4,
\end{array}\right.
$$
  where $C>0$ depends only on $n, \sigma, \varepsilon, A_{0}$, as well as constants $\bar{b}$ and $\bar{d}$.
\end{enumerate}
\end{theorem}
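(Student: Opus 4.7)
The plan is to combine a blow-up analysis at $\bar x$ with a Pohozaev-type identity for the integral equation \eqref{5}, following the framework developed for related lower-order equations by Jin-Li-Xiong \cite{JLXThe2017} and Li-Xiong \cite{LXCompactness2019}. The qualitative assertion $a(\bar x)=\Delta a(\bar x)=0$ and the quantitative rates in (i)--(ii) will emerge from the same computation, read either as a limit ($i\to\infty$) or as a finite-$i$ comparison of prefactors.

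First I would perform the blow-up. In case (ii) we already have a local maximum $x_i$ with $\max_{B_{\bar d}(x_i)}u_i\le \bar b\,u_i(x_i)$, so set $M_i:=u_i(x_i)$, $\lda_i:=M_i^{-2/(n-2\sigma)}$, and
$$
\tilde u_i(y):=M_i^{-1}u_i(x_i+\lda_i y),\qquad \tilde a_i(y):=\lda_i^{2\sigma}a_i(x_i+\lda_i y).
$$
Standard elliptic arguments produce $\tilde u_i\to U$ in $C^{2\sigma}_{\mathrm{loc}}(\R^n)$ with $U(y)=c(n,\sigma)(1+|y|^2)^{-(n-2\sigma)/2}$, while $\tilde a_i\to 0$. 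In case (i), where $x_i$ need not be a local maximum, I would insert a Schoen-type selection to replace $x_i$ by nearby $x_i'\to\bar x$ which are local maxima with comparable rate, recovering the same rescaled picture.

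Next I would invoke an integral Pohozaev identity for $u_i$, localized to a small ball $B_r(x_i')$. The critical nonlinearity and the smooth piece $h_i$ contribute only boundary terms, while the potential piece produces an interior contribution of the form
$$
-\sigma\int_{B_r(x_i')}a_i u_i^2\,\ud x-\tfrac12\int_{B_r(x_i')}\bigl((x-x_i')\cdot\nabla a_i\bigr)u_i^2\,\ud x.
$$
Taylor-expanding $a_i$ at $x_i'$ and using that $\tilde u_i$ is asymptotically radial (so odd-in-$y$ contributions vanish to leading order after rescaling) reduces this interior piece, modulo controlled errors, to
$$
-\sigma\,a_i(x_i')\!\int_{B_r(x_i')}\!u_i^2\,\ud x-\frac{\sigma+1}{2n}\Delta a_i(x_i')\!\int_{B_r(x_i')}\!|x-x_i'|^2 u_i^2\,\ud x.
$$

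Changing variables to $y=(x-x_i')/\lda_i$ converts these into $M_i^{-4\sigma/(n-2\sigma)}\!\int_{B_{r/\lda_i}}\tilde u_i^2\,\ud y$ and $M_i^{-(4\sigma+4)/(n-2\sigma)}\!\int_{B_{r/\lda_i}}|y|^2\tilde u_i^2\,\ud y$. Since $U\sim|y|^{-(n-2\sigma)}$ at infinity, $\int U^2$ converges for $n>4\sigma$, while $\int|y|^2U^2$ is logarithmically divergent exactly at $n=4\sigma+2$ and convergent for $n>4\sigma+2$; the analogous moment $\int|y|^4U^2$ becomes borderline at $n=4\sigma+4$. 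This dichotomy produces the case split of the theorem: for $4\sigma+2\le n<4\sigma+4$ the $\Delta a$-weighted term dominates and yields the stated $M_i$-rates (with a $\ln M_i$ factor at $n=4\sigma+2$), while for $n\ge 4\sigma+4$ both leading pieces are comparable, giving the sharper $M_i^{-4/(n-2\sigma)}$ bound (with a logarithm at $n=4\sigma+4$). The boundary terms are controlled by the interior $C^{2\sigma}$-estimate on annuli furnished by the hypotheses, and sending $i\to\infty$ in the resulting inequality forces $a(\bar x)=\Delta a(\bar x)=0$. The main obstacle will be the quantitative bookkeeping: upgrading the radial-symmetry cancellation into an effective error estimate since $\tilde u_i$ is only asymptotically radial, extracting the precise $\ln M_i$ prefactor at the borderline dimensions $n=4\sigma+2$ and $n=4\sigma+4$, and, in case (i), implementing the selection of $x_i'$ so that the local-maximum information needed for the blow-up is available.
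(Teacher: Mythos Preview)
Your overall architecture is right---blow up, Pohozaev identity, Taylor expand $a_i$, compute moments of $U^2$---and the dimension thresholds you identify are exactly those that drive the case split. But there is a genuine structural gap that your plan does not address: you never establish that the blow up point is \emph{isolated simple}, and without this the rest of the argument cannot close.

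Concretely, the step ``The boundary terms are controlled by the interior $C^{2\sigma}$-estimate on annuli furnished by the hypotheses'' is where the argument breaks. The hypotheses do not furnish such control. What is needed is the sharp pointwise upper bound $u_i(x)\le C\,u_i(x_i')^{-1}|x-x_i'|^{2\sigma-n}$ on an annulus, and this is only available once one knows the blow up is isolated simple (this is Proposition~\ref{pro:Blowup6} in the paper). The same sharp bound is also what makes the replacement of $\tilde u_i$ by $U$ quantitative: the paper devotes Section~4 (Lemmas~\ref{lem:Expansions2}--\ref{lem:Expansions3}, Corollary~\ref{cor:Expansions4}) to proving $|\Phi_i-\Theta_1|$ estimates via the nondegeneracy of the linearized equation, and these feed into the Pohozaev computation (Proposition~\ref{pro:Estimates1}). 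Your phrase ``asymptotically radial so odd-in-$y$ contributions vanish'' is the qualitative shadow of this, but the quantitative version is a separate argument you have not outlined.

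There is also a logical subtlety in case (ii) that your direct approach misses. To prove a blow up point is isolated simple (Proposition~\ref{pro:Estimates2}), the paper needs a lower bound on $\beta_i:=\sigma a_i(x_i)m_i^{2(n-4\sigma)/(n-2\sigma)}+\tfrac{\sigma+1}{2n}\Delta a_i(x_i)m_i^{2(n-4\sigma-2)/(n-2\sigma)}$---but for $n\ge 4\sigma+4$ this lower bound is essentially the estimate you are trying to prove. The paper resolves this circularity by arguing by contradiction: assume the conclusion fails along a subsequence, use that failure to supply the $\beta_i$ hypothesis of Proposition~\ref{pro:Estimates2}, deduce isolated simple, and then run the Pohozaev argument to reach a contradiction. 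Your direct derivation does not account for this loop. In case (i) the situation is better because $\Delta a_i\ge 0$ on the sublevel set already gives $\beta_i\ge 0$, but you still need the bubble-separation step (Proposition~\ref{pro:Proof2}) to promote the Schoen selection to an \emph{isolated} blow up point before invoking isolated simple.
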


The proofs of Theorem \ref{thm:1.2} and \ref{thm:1.3} make use of important ideas for the proof of compactness of positive solutions of the Yamabe equation, which were outlined first by Schoen \cite{SVariational1989}, as well as methods developed through the work Li \cite{LPrescribing1995}, Li-Zhu \cite{LZYamabe1999}, Li-Zhang \cite{LZA2004, LZCompactnessII2005, LZCompactnessIII2007} and Marques \cite{MA2005}. Since potentials in \eqref{eq:Differential} and \eqref{eq:Integral} are not geometric and their Taylor expansion polynomials of order $\geq 2$ have not to be orthogonal to the zeroth and first order polynomials, it is not possible to construct correctors. It is unclear to us how to show higher order derivatives vanishing estimates.

The rest of this paper is organized as follows. In Section 2, we show the integral representation for nonnegative solutions to the differential equation \eqref{eq:Differential}. In section 3, we establish basic results of so-called isolated simple blow up points. In section 4, we establish the refined quantitative asymptotic analysis mentioned above. In section 5, we prove that the isolated blow up points must to be isolated simple blow up points. The main theorems are proved in section 6.

\section{An integral representation for the integer order cases}
For $n \geq 3$, let $G_{1}(x, y)$ be the Green function of $-\Delta$ on $B_{3}$. Namely, for any $u \in C^{2}(B_{3}) \cap C(\overline{B}_{3})$,
$$
u(x)=\int_{B_{3}} G_{1}(x, y)(-\Delta) u(y) \,\ud y+\int_{\partial B_{3}} H_{1}(x, y) u(y) \,\ud S_{y},
$$
where
$$
H_{1}(x, y)=-\frac{\partial}{\partial \nu_{y}} G_{1}(x, y) \quad \text { for }\, x \in B_{3},\, y \in \partial B_{3}.
$$
By induction, we have, for $2 m<n$ and $u \in C^{2 m}(B_{3}) \cap C^{2 m-2}(\overline{B}_{3})$,
\be\label{eq:green}
u(x)=\int_{B_{3}} G_{m}(x, y)(-\Delta)^{m} u(y) \,\ud y+\sum_{i=1}^{m} \int_{\partial B_{3}} H_{i}(x, y)(-\Delta)^{i-1} u(y) \,\ud  S_{y},
\ee
where
$$
G_{m}(x, y)=\int_{B_{3} \times \cdots \times B_{3}} G_{1}(x, y_{1}) G_{1}(y_{1}, y_{2}) \cdots G_{1}(y_{m-1}, y) \,\ud y_{1} \cdots \,\ud y_{m-1},
$$
and
$$
H_{i}(x, y)=\int_{B_{3} \times \cdots \times B_{3}} G_{1}(x, y_{1}) G_{1}(y_{1}, y_{2}) \cdots G_{1}(y_{i-2}, y_{i-1}) H_{1}(y_{i-1}, y) \,\ud  y_{1} \cdots \,\ud y_{i-1}
$$
for $2 \leq i \leq m$.

By direct computations, we have
\begin{equation}\label{eq:Gm}
G_{m}(x, y)=c(n, m)|x-y|^{2 m-n}+A_{m}(x, y),
\end{equation}
where $c(n, m)=\frac{\Gamma((n-2 m)/{2})}{2^{2 m} \pi^{n / 2} \Gamma(m)}$, $A_{m}(\cdot, \cdot)$ is smooth in $B_{3} \times B_{3}$, and
$$
H_{i}(x, y) \geq 0, \quad i=1, \cdots, m.
$$

\begin{proposition}\label{pro:Dif-Int}
Assume as in Theorem \ref{thm:1.1}. Then there exists $\tau>0$ such that
\begin{equation}
u(x)=c(n,m) \int_{B_{\tau}} \frac{a(y)u(y)+u(y)^{\frac{n+2 m}{n-2 m}}}{|x-y|^{n-2 m}} \,\ud y+h_{1}(x) \quad \text { for }\, x \in B_{\tau},
\end{equation}
where $h_{1}(x)$ is a positive smooth function in $B_{\tau}$.
\end{proposition}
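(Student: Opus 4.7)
The plan is to apply the polyharmonic Green's representation \eqref{eq:green} with $m=\sigma$ on an interior ball and isolate the Riesz-kernel contribution via the splitting \eqref{eq:Gm}. Since $u \in C^{2\sigma}(B_3)$, I would pick $R \in (1,3)$ with $u \in C^{2\sigma}(\overline{B}_R)$ and apply \eqref{eq:green} on $B_R$ in place of $B_3$, using the Green and Poisson kernels $G_\sigma^R$, $H_i^R$ associated to the iterated Dirichlet problem on $B_R$ (which inherit the same splitting $G_\sigma^R(x,y) = c(n,\sigma)|x-y|^{2\sigma-n} + A_\sigma^R(x,y)$ with $A_\sigma^R$ smooth on $\overline{B}_R \times \overline{B}_R$, and $H_i^R \geq 0$). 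Substituting $(-\Delta)^\sigma u = a u + u^{(n+2\sigma)/(n-2\sigma)}$ peels off the Riesz-potential term, and choosing $\tau < R$ and moving the Riesz integral over $B_R \setminus B_\tau$ into the remainder produces the stated identity with
\begin{equation*}
h_1(x) := c(n,\sigma)\!\int_{B_R \setminus B_\tau}\!\!\frac{au + u^p}{|x-y|^{n-2\sigma}}\,dy + \int_{B_R}\! A_\sigma^R(x,y)[au + u^p]\,dy + \sum_{i=1}^\sigma \int_{\partial B_R}\! H_i^R(x,y)(-\Delta)^{i-1}u \, dS_y,
\end{equation*}
where $p = (n+2\sigma)/(n-2\sigma)$.

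Smoothness of $h_1$ on $B_\tau$ is then immediate from the smoothness of $A_\sigma^R$ on the product ball, the smoothness of the annular Riesz kernel for $x \in B_\tau$, and the smoothness of each $H_i^R(\cdot, y)$ in $x \in B_\tau$ for $y \in \partial B_R$. Equivalently, one can observe that $(-\Delta)^\sigma h_1 \equiv 0$ on $B_\tau$ and invoke interior elliptic regularity.

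The main delicate point is the strict positivity of $h_1$. The boundary sum is nonnegative because $H_i^R \geq 0$, and the sign condition \eqref{eq:sign conditions} together with $u \geq 0$ guarantee $(-\Delta)^{i-1}u \geq 0$ on $\partial B_R$ for every $i = 1,\dots,\sigma$; for a nontrivial solution, the strong maximum principle forces $u > 0$ in $B_3$, and the strict positivity of the Dirichlet Poisson kernel $H_1^R$ makes the $i=1$ boundary integral strictly positive on $B_\tau$. The only piece of $h_1$ whose sign is not a priori controlled is the $A_\sigma^R$-volume integral, but the newly introduced annular Riesz term provides a strictly positive lower bound on $B_\tau$ that, by a direct size comparison against $\sup|A_\sigma^R|$ times the $L^1$-norm of $au + u^p$, dominates the $A_\sigma^R$-contribution once $\tau$ is chosen sufficiently smaller than $R$. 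This sign analysis is the principal obstacle; the rest of the proposition is a straightforward unpacking of \eqref{eq:green} and \eqref{eq:Gm}.
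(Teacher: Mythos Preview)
Your overall strategy matches the paper's: apply the Green representation \eqref{eq:green}, split via \eqref{eq:Gm}, and check positivity of the remainder using the sign conditions \eqref{eq:sign conditions} together with the Poisson boundary integral. The smoothness argument is fine.

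The gap is in your positivity mechanism for the $A_\sigma^R$-integral. You claim the annular Riesz term ``dominates the $A_\sigma^R$-contribution once $\tau$ is chosen sufficiently smaller than $R$'' via a direct size comparison against $\sup|A_\sigma^R|\cdot\|au+u^p\|_{L^1(B_R)}$. But in your decomposition the $A_\sigma^R$-integral is taken over \emph{all} of $B_R$ and is therefore a fixed number independent of $\tau$, while the annular Riesz integral, though increasing as $\tau\downarrow 0$, remains bounded (it tends to a finite limit). Shrinking $\tau$ thus gives no mechanism to force one to exceed the other, and there is no a~priori reason your crude upper bound on the $A_\sigma^R$-piece is smaller than the annular Riesz lower bound.

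The paper closes this with a slightly different split: it keeps the \emph{full} Green kernel $G_m$ on the annulus $B_3\setminus B_\tau$---that term is automatically nonnegative since $G_m\ge 0$ on a ball---and places the regular part $A_m$ only over $B_\tau$, so that $\bigl|\int_{B_\tau} A_m(x,y)(au+u^p)\,\ud y\bigr|<c_1/2$ once $\tau$ is small. The strict lower bound then comes solely from the $i=1$ boundary term: from $-\Delta u\ge 0$ and the maximum principle one has $u\ge c_1:=\inf_{\partial B_3}u>0$, and since $\int_{\partial B_3}H_1(x,y)\,\ud S_y=1$ this gives $\int_{\partial B_3}H_1 u\,\ud S\ge c_1$, hence $h_1\ge c_1/2$. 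Your decomposition can be repaired the same way---move the annular $A_\sigma^R$-piece back into a nonnegative $G_\sigma^R$-integral over the annulus, leaving only $\int_{B_\tau}A_\sigma^R(\cdot)(au+u^p)$ to control---or, even more directly, observe that $h_1(x)=u(x)-c(n,\sigma)\int_{B_\tau}|x-y|^{2\sigma-n}(au+u^p)\,\ud y$ with the subtracted integral of order $O(\tau^{2\sigma})$. But the argument as written does not close.
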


\begin{proof}
Without loss of generality, we can assume that $u \in C^{2 m}(\overline{B}_{3})$ and $u>0$ in $\overline{B}_{3}$. Otherwise, we just consider the equation in a smaller ball.

Since $-\Delta u \geq 0$ in ${B}_{3}$ and $u>0$ in $\overline{B}_3$, it follows from the maximum principle that
$$
u(x) \geq c_{1}:=\inf _{\partial B_{3}} u>0 \quad \text { for all }\, x \in \overline{B}_{3}.
$$
Since $a, u \in C^2(B_3)$, we have $au, u^{\frac{n+2 m}{n-2 m}} \in L^{1}(B_{3})$. Thus, one can find $0<\tau<1/4$ such that
$$
\int_{B_{\tau}}|A_{m}(x, y)| (a(y)u(y)+u(y)^{\frac{n+2 m}{n-2 m}}) \,\ud y<\frac{c_{1}}{2} \quad \text { for }\, x \in B_{\tau},
$$
where $A_{m}(x, y)$ is as in \eqref{eq:Gm}. By \eqref{eq:green}, we write
$$
u(x)=c(n,m) \int_{B_{\tau}} \frac{a(y)u(y)+u(y)^{\frac{n+2 m}{n-2 m}}}{|x-y|^{n-2 m}} \,\ud y+h_{1}(x),
$$
where
$$
\begin{aligned}
h_{1}(x)=&\int_{B_{\tau}} A_{m}(x, y)  (a(y)u(y)+u(y)^{\frac{n+2m}{n-2m}}) \,\ud y+\int_{B_{3} \backslash B_{\tau}} G_{m}(x, y) (a(y)u(y)+u(y)^{\frac{n+2m}{n-2m}}) \,\ud  y \\
&+\sum_{i=1}^{m} \int_{\partial B_{3}} H_{i}(x, y)(-\Delta)^{i-1} u(y) \,\ud  S_{y} \\
\geq &-\frac{c_{1}}{2}+\int_{\partial B_{3}} H_{1}(x, y) u(y) \,\ud  S_{y} \\
\geq &-\frac{c_{1}}{2}+\inf _{\partial B_{3}} u\\
=&\frac{c_{1}}{2} \quad \text { for }\, x \in B_{\tau},
\end{aligned}
$$
where we used the sign conditions \eqref{eq:sign conditions} in the first inequality and the fact that $\int_{\partial B_3} H_1(x,y)\,\ud S_{y}=1$ in the second inequality. This is the only place that we used these sign conditions \eqref{eq:sign conditions}. On the other hand, it is easy to check that $h_{1}$ is smooth in $B_{\tau}$. This completes the proof.
\end{proof}

\section{Analysis of isolated blow up points}
We first recall some local estimates for the solutions of the linear integral equation
\begin{equation}\label{eq:LIE}
u(x)=\int_{B_3} \frac{V(y) u(y)}{|x-y|^{n-2 \sigma}}\, \ud y+h(x) \quad \text { in }\, B_{3},
\end{equation}
where $1\leq\sigma<n / 2$, $0\leq h\in C^{\infty}(B_3)$ satisfy \eqref{eq:hsatisfy}. Their proofs can be found in Jin-Li-Xiong \cite{JLXThe2017}, see also Li-Xiong \cite{LXCompactness2019}.

\begin{proposition}\label{pro:JLXHarnack}
Let $0 \leq V \in L^{\infty}(B_{3})$, and let $0 \leq u \in C^0(B_{3})$ satisfy \eqref{eq:LIE}, then we have
$$
\max _{\overline{B}_{2}} u \leq C \min _{\overline{B}_{2}} u,
$$
where $C>0$ depends only on $n, \sigma, A_2,$ and $\|V\|_{L^{\infty}(B_{3})}$.
\end{proposition}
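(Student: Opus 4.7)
The plan is to establish the Harnack inequality by decomposing the Riesz integral in \eqref{eq:LIE} into a far piece and a local piece around the evaluation point, and then absorbing the local piece via a weighted-supremum bootstrap; this is the standard approach for integral equations with bounded potential, and the details are worked out in \cite{JLXThe2017}.

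Fix arbitrary $x_1, x_2 \in \overline{B}_2$; the goal is $u(x_1) \le C u(x_2)$. For a parameter $\delta \in (0, 1/2)$ to be chosen, apply \eqref{eq:LIE} at $x_1$ and split
\[
u(x_1) - h(x_1) = \int_{B_\delta(x_1)} \frac{V u}{|x_1 - y|^{n-2\sigma}}\,\ud y + \int_{B_3 \setminus B_\delta(x_1)} \frac{V u}{|x_1 - y|^{n-2\sigma}}\,\ud y =: I_{\mathrm{near}} + I_{\mathrm{far}}.
\]
For $y \in B_3 \setminus B_\delta(x_1)$ the triangle inequality together with $|x_1 - x_2| \le 4$ gives $|x_2 - y| \le (1 + 4/\delta)|x_1 - y|$, so $I_{\mathrm{far}} \le C_\delta \int_{B_3} V u\,|x_2 - y|^{2\sigma - n}\,\ud y \le C_\delta u(x_2)$. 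Using $V \in L^\infty$,
\[
I_{\mathrm{near}} \le C_n \|V\|_{L^\infty}\,\delta^{2\sigma} \sup_{B_\delta(x_1)} u.
\]
Finally, \eqref{eq:LIE} with $V, u \ge 0$ forces $u \ge h$ on $B_3$, so the Harnack-type bound on $h$ in \eqref{eq:hsatisfy} yields $h(x_1) \le A_2 h(x_2) \le A_2 u(x_2)$. Collecting,
\[
u(x_1) \le C_n \|V\|_{L^\infty}\,\delta^{2\sigma} \sup_{B_\delta(x_1)} u + (C_\delta + A_2)\, u(x_2).
\]

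To absorb the supremum, fix $\delta_* \in (0, 1/2)$ so that $C_n \|V\|_{L^\infty}\,\delta_*^{2\sigma} \le 1/2$, and introduce the weighted quantity $\Phi := \sup_{x \in B_{2+\delta_*}} (2+\delta_* - |x|)^\alpha u(x)$ for some $\alpha \in (0, 1)$. Apply the displayed estimate at a maximizer $\bar x$ with $\delta$ chosen as a suitable fraction of $d := 2 + \delta_* - |\bar x|$; since $\sup_{B_\delta(\bar x)} u \le \Phi/(d/2)^\alpha$, multiplying through by $d^\alpha$ produces an inequality of the form $\Phi \le \theta \Phi + C u(x_2) + d^\alpha h(\bar x)$ with $\theta = 2^{\alpha-1} < 1$. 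The term $h(\bar x)$ for $\bar x$ possibly in $B_{2+\delta_*} \setminus \overline{B}_2$ is controlled by combining the Harnack of $h$ on $\overline{B}_2$ with a short-range Taylor expansion using the gradient bound in \eqref{eq:hsatisfy}. Hence $\Phi \le C u(x_2)$, which implies $\sup_{\overline{B}_2} u \le C u(x_2)$; taking $x_2$ to be a minimizer of $u$ on $\overline{B}_2$ completes the proof.

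The main obstacle is the absorption step. A naive iteration on concentric balls $\overline{B}_{r_k}$ with $r_k$ increasing from $2$ fails because the Harnack-type bound on $h$ in \eqref{eq:hsatisfy} is posited only on $\overline{B}_2$, so one cannot simply reapply the estimate on a slightly larger ball. The weighted-supremum device localizes the analysis so that extending the Harnack of $h$ is needed only over a short range, where the gradient bound in \eqref{eq:hsatisfy} suffices. The weight exponent $\alpha$ must be calibrated below $1$ for the absorption factor $2^{\alpha-1}$ to be strictly less than one.
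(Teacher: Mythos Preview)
The paper does not prove this proposition; it refers to \cite{JLXThe2017}. Your near--far splitting and the bounds $I_{\mathrm{far}}\le C_\delta\, u(x_2)$ and $h(x_1)\le A_2\, u(x_2)$ for $x_1\in\overline B_2$ are correct, and you rightly single out the absorption of $\sup_{B_\delta(x_1)}u$ as the crux. But the weighted-supremum device does not close as you describe it. With $\delta=d/2$, the coefficient of $\Phi$ after multiplying by $d^\alpha$ is $C_n\|V\|_{L^\infty}(d/2)^{2\sigma}\cdot 2^\alpha$, which equals your claimed $\theta=2^{\alpha-1}$ only when $d=2\delta_*$; if the maximizer $\bar x$ sits well inside the ball (so $d$ is of order $1$) this coefficient is of size $\|V\|_{L^\infty}$ and need not be $<1$. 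Patching with $\delta=\min(d/2,\delta_*)$ makes the absorption go through, but then in the regime $d<2\delta_*$ the far-piece term after weighting is $C_{d/2}\,d^\alpha u(x_2)\sim d^{\alpha-(n-2\sigma)}u(x_2)$, which diverges as $d\to 0$ whenever $\alpha<n-2\sigma$; the resulting bound depends on where $\bar x$ falls, hence on $u$, and is not the uniform Harnack constant claimed. Separately, your extension of the Harnack of $h$ to $\bar x\in B_{2+\delta_*}\setminus\overline B_2$ via the gradient bound in \eqref{eq:hsatisfy} is not justified: that inequality is stated only at points of $B_2$ and controls $|\nabla h(x)|$ by a local \emph{supremum} of $h$, not by $\min_{\overline B_2}h$, so it does not propagate the Harnack outward.

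What does work (and is the mechanism behind the result in \cite{JLXThe2017}) is to first read off from the equation at $x_2$ the global bound $\|Vu\|_{L^1(B_3)}\le 5^{n-2\sigma}u(x_2)$, and then iterate the identity $u=T(Vu)+h$ finitely many times: once the number of Riesz convolutions exceeds $n/(2\sigma)$ the composed kernel is bounded, so the iterated tail $(TV)^N u(x_1)$ is pointwise $\le C\|Vu\|_{L^1}\le C\,u(x_2)$; the finitely many intermediate terms involve $h$ only through $Vh$, whose $L^1$ norm over $B_3$ is likewise $\le C\,u(x_2)$ and whose $L^\infty$ norm on $\overline B_2$ is $\le C\|V\|_{L^\infty}u(x_2)$ by the Harnack of $h$, which is enough for a pointwise bound on a slightly smaller ball. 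No weighted supremum is needed.
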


\begin{proposition}\label{pro:JLXSchauder}
Suppose the hypotheses in Proposition \ref{pro:JLXHarnack}. Then $u \in C^{2}(B_{2})$ and
$$
\|u\|_{C^{2}(B_{2})} \leq C\|u\|_{L^{\infty}(B_{3})},
$$
where $C>0$ depends only on $n, \sigma, A_2,$ and $\|V\|_{L^{\infty}(B_{3})}$.

If $V \in C^{1}(B_3)$, then $u \in C^{3}(B_2)$ and
$$
\|\nabla^{3} u\|_{L^{\infty}(B_1)} \leq C\|u\|_{L^{\infty}(B_3)},
$$
where $C>0$ depends only on $n, \sigma, A_2,$ and $\|V\|_{C^{1}(B_{3})}$.
\end{proposition}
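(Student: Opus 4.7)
The plan is to differentiate the integral equation \eqref{eq:LIE} pointwise under the integral sign, exploiting the fact that for $\sigma\ge 1$ the Riesz kernel $|x-y|^{-(n-2\sigma)}$ can be differentiated once (and, off the borderline integer case $\sigma=1$, twice) while keeping a locally integrable majorant. First I would observe that $Vu\in L^\infty(B_3)$ with norm controlled by $\|V\|_{L^\infty(B_3)}\|u\|_{L^\infty(B_3)}$, and that moving the nonnegative Riesz integral to the other side of \eqref{eq:LIE} gives $h\le u$ pointwise, so $\|h\|_{L^\infty(B_3)}\le\|u\|_{L^\infty(B_3)}$ and, by \eqref{eq:hsatisfy}, all derivatives of $h$ up to third order on $B_2$ are controlled by $\|u\|_{L^\infty(B_3)}$.

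For the gradient bound I would write
$$\partial_{x_i} u(x)=(2\sigma-n)\int_{B_3}\frac{(x_i-y_i)V(y)u(y)}{|x-y|^{n-2\sigma+2}}\,\ud y+\partial_{x_i}h(x),$$
and bound the integral by the locally integrable weight $|x-y|^{-(n-2\sigma+1)}$, which is integrable over $B_3$ uniformly in $x\in B_2$ since $\sigma\ge 1>1/2$. For the second derivatives I would attempt the same procedure, producing the kernel $|x-y|^{-(n-2\sigma+2)}$; this is integrable precisely when $\sigma>1$, so every integer $\sigma\ge 2$ is dispatched by direct differentiation plus a dominated convergence argument to justify interchanging the differential operator with the integral.

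The borderline integer case $\sigma=1$ is the main obstacle, since the second-order derivative of $|x-y|^{-(n-2)}$ is not locally integrable. The idea is that $c(n,1)|x-y|^{-(n-2)}$ is the fundamental solution of $-\Delta$ on $\R^n$, so applying $-\Delta$ to \eqref{eq:LIE} in the distributional sense converts it to the classical interior equation $-\Delta u=c_n Vu-\Delta h$ on $B_3$ with right-hand side in $L^\infty$; standard interior elliptic estimates (Calder\'on--Zygmund in $W^{2,p}$, or Schauder after a short bootstrap using the $C^1$ bound already proved, to produce continuous $Vu$) on a slightly smaller ball then yield the stated second-derivative bound, with the constant in the exact form of the statement.

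Finally, the second assertion, which upgrades $u$ to $C^3(B_1)$ when $V\in C^1(B_3)$, is handled by differentiating the representation once more: the product rule gives $\partial(Vu)=(\partial V)u+V(\partial u)$, where both factors are bounded in $L^\infty(B_2)$ by the first part and the assumption on $V$, after which either one additional kernel differentiation (integrable for $\sigma>3/2$, hence for every integer $\sigma\ge 2$) or a PDE bootstrap of $-\Delta u=c_n Vu-\Delta h$ with right-hand side now in $C^1$ (for $\sigma=1$) completes the argument. The only genuine technical point throughout is the borderline integrability at $\sigma=1$; once one exploits the elliptic structure of $(-\Delta)^\sigma$ sitting behind the Riesz kernel, everything else reduces to dominated convergence and kernel bookkeeping.
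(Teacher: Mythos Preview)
The paper does not give its own proof of this proposition; it simply cites Jin--Li--Xiong \cite{JLXThe2017} (see also Li--Xiong \cite{LXCompactness2019}). Your direct-differentiation approach is the natural one, and for real $\sigma>1$ it works exactly as you describe: the $k$-th derivative of the Riesz kernel is dominated by $|x-y|^{-(n-2\sigma+k)}$, which is locally integrable for $k\le 2$ once $\sigma>1$, and dominated convergence then yields the $C^2$ bound. Note, though, that the proposition is stated for \emph{real} $\sigma\ge 1$, not only integers; you phrase several steps in terms of ``every integer $\sigma\ge 2$'' when the kernel estimate already covers all real $\sigma>1$.

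There are two places where your sketch has a genuine gap. First, the endpoint $\sigma=1$: converting to $-\Delta u=c_n Vu-\Delta h$ with $Vu\in L^\infty$ gives $u\in W^{2,p}_{\mathrm{loc}}$ for every $p<\infty$ (hence $C^{1,\alpha}$ for all $\alpha<1$), but Calder\'on--Zygmund does \emph{not} produce $u\in C^2$, and Schauder requires a H\"older right-hand side, which you do not have since $V$ is only $L^\infty$. The same obstruction recurs for the $C^3$ claim at $\sigma=1$: with $V\in C^1$ you can bootstrap to $u\in C^{2,\alpha}$, but $\nabla(Vu)=(\nabla V)u+V\nabla u$ is merely continuous (because $\nabla V\in C^0$ need not be H\"older), so a further Schauder step is unavailable. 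Either the statement is to be read with an $\varepsilon$-loss at this endpoint (which suffices for every application in the paper), or the cited proof handles $\sigma=1$ by a different device.

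Second, for the $C^3$ bound when $1<\sigma\le 3/2$: your dichotomy ``one additional kernel differentiation (integrable for $\sigma>3/2$) or PDE bootstrap at $\sigma=1$'' leaves this range uncovered. The clean fix is to localize with a cutoff $\chi\in C_c^\infty(B_{5/2})$, $\chi\equiv 1$ on $B_2$, split off the far piece (smooth in $B_1$), and integrate by parts once in the near piece to transfer one $x$-derivative onto $\chi Vu$ with no boundary term. This represents $\partial_j u$ on $B_1$ as a Riesz potential of the $L^\infty$ function $\partial_j(\chi Vu)$ plus a smooth remainder, and then your own first-part argument (valid for all real $\sigma>1$) gives $\partial_j u\in C^2(B_1)$, i.e.\ $u\in C^3(B_1)$.
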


Let $\tau_{i}\geq 0$ satisfy $\lim _{i \rightarrow \infty} \tau_{i}=0$, $p_{i}=(n+2 \sigma) /(n-2 \sigma)-\tau_{i}$, $a_{i} \geq 0$ be a sequence
of functions converging to $a$ in $C^{2}(B_{3})$, and $h_i\geq 0$ be a sequence of functions in $C^{\infty}(B_{3})$ satisfying for $0<r<1 / 2$,
\begin{equation}\label{eq:Harnackhi}
\max _{\bar{B}_{r}(x)} h_{i} \leq A_2 \min _{\bar{B}_{r}(x)} h_{i}\, \text{ and }\, \sum_{j=1}^{3} r^{j}|\nabla^{j} h_{i}(x)| \leq A_2\|h_{i}\|_{L^{\infty}(B_{r}(x))}, \quad x \in B_{2}.
\end{equation}
Let $u_{i}\geq 0$ be a sequence of $C^{2}(B_{3})$ solutions of
\begin{equation}\label{eq:intui=}
u_i(x)=\int_{B_3} \frac{a_i(y) u_i(y)+u_i(y)^{p_i}}{|x-y|^{n-2\sigma}}\,\ud y+h_i(x) \quad \text { in }\, B_{3}.
\end{equation}
We say a point $\bar{x} \in B_{2}$ is a blow up point of $\{u_{i}\}$ if $u_{i}(x_{i}) \rightarrow \infty$ for some $x_{i} \rightarrow \bar{x}$.

\begin{definition}\label{def:isolated}
Suppose that $\{u_{i}\}$ satisfies \eqref{eq:intui=}. We say a point $\bar{x} \in B_{2}$ is an isolated blow up point of $\{u_{i}\}$ if there exist $0<\bar{r}<\operatorname{dist}(\bar{x}, \partial B_{3})$, $\bar{C}>0$, and a sequence $x_{i}$ tending to $\bar{x}$, such that $x_{i}$ is a local maximum of $u_{i}$, $u_{i}(x_{i}) \rightarrow \infty$ and
$$
u_{i}(x) \leq \bar{C}|x-x_{i}|^{-2 \sigma /(p_{i}-1)} \quad \text { for all }\, x \in B_{\bar{r}}(x_{i}).
$$
\end{definition}

Let $x_{i} \rightarrow \bar{x}$ be an isolated blow up point of $u_{i}$, define for $0<r<\bar{r}$,
\begin{equation}\label{22}
\bar{u}_{i}(r)=\frac{1}{|\partial B_{r}(x_{i})|} \int_{\partial B_{r}(x_{i})} u_{i} \quad \text { and } \quad \bar{w}_{i}(r)=r^{2 \sigma /(p_{i}-1)} \bar{u}_{i}(r).
\end{equation}

\begin{definition}\label{def:isolated simple}
We say $x_{i} \rightarrow \bar{x} \in B_{2}$ is an isolated simple blow up point if $x_{i} \rightarrow \bar{x}$ is an isolated blow up point such that for some $\rho>0$ (independent of $i$), $\bar{w}_{i}$ has precisely one critical point in $(0, \rho)$ for large $i$.
\end{definition}

In the above, we use $B_{2}$ and $B_{3}$ for conveniences. One can replace them by open sets.

If $x_{i} \rightarrow 0$ is an isolated blow up point, then we will have the following Harnack inequality in the annulus centered at $0$.

\begin{lemma}\label{lem:Harnack}
Suppose that $\{u_{i}\}$ satisfies \eqref{eq:intui=}, and $x_{i} \rightarrow 0$ is an isolated blow up point of $\{u_{i}\}$, i.e., for some positive constants $A_{1}$ and $\bar{r}$ independent of $i$,
\begin{equation}\label{eq:Harnack1}
|x-x_{i}|^{2 \sigma /(p_{i}-1)} u_{i}(x) \leq A_{1} \quad \text { for all }\, x \in B_{\bar{r}}(x_{i}) \subset B_{3}.
\end{equation}
Then for any $0<r<\bar{r}/3$, we have the following Harnack inequality:
$$
\sup_{{B}_{2 r}(x_{i}) \backslash \overline{{B}_{r / 2}(x_{i})}} u_{i}\leq C\inf_{{B}_{2 r}(x_{i}) \backslash \overline{{B}_{r / 2}(x_{i})}} u_{i},
$$
where $C>0$  depends only on $n, \sigma, A_{1}, A_2, \bar{r},$ and $\sup _{i}\|a_{i}\|_{L^{\infty}(B_{\bar{r}}(x_{i}))}$.
\end{lemma}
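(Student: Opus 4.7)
The plan is to reduce the claim to the local Harnack inequality of Proposition \ref{pro:JLXHarnack} via two rescalings: a blow-up rescaling at scale $r$, followed by a fixed-scale localization that covers the resulting unit-scale annulus.

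First, I set $v_i(y) = r^{2\sigma/(p_i-1)} u_i(x_i+ry)$, defined on $\{y: x_i+ry\in B_3\}$. A direct change of variables in \eqref{eq:intui=} converts it into an integral equation of the same form for $v_i$ on this enlarged domain, with potential $V_i(\zeta) = r^{2\sigma} a_i(x_i + r\zeta) + v_i(\zeta)^{p_i-1}$ and inhomogeneous term $\tilde h_i(y) = r^{2\sigma/(p_i-1)} h_i(x_i+ry)$. The hypothesis \eqref{eq:Harnack1} translates to the pointwise bound $|y|^{2\sigma/(p_i-1)} v_i(y) \leq A_1$ on $B_{\bar r/r}$, so on any fixed annulus bounded away from the origin both $v_i$ and $V_i$ are uniformly bounded in $i$. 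The assertion of the lemma is now equivalent to a Harnack inequality for $v_i$ on $\{1/2 \leq |y|\leq 2\}$ with a constant independent of $r$ and $i$.

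Next, I would cover this annulus by a finite, $r$-independent number of small balls $B_\rho(\bar y_k)$, with $\rho$ fixed so that $B_{3\rho}(\bar y_k)\subset\{1/4\leq|y|\leq 4\}$, arranged as an overlapping chain. On each ball, split the integral equation into a local part supported in $B_{3\rho}(\bar y_k)$ plus a residual
$$H_i^{(k)}(y) := \tilde h_i(y) + \int_{\{\zeta:\, x_i+r\zeta\in B_3\}\setminus B_{3\rho}(\bar y_k)} \frac{V_i(\zeta)v_i(\zeta)}{|y-\zeta|^{n-2\sigma}}\,\ud\zeta,$$
and rescale $\eta=(y-\bar y_k)/\rho$ to a unit domain so that Proposition \ref{pro:JLXHarnack} applies with potential $\rho^{2\sigma} V_i(\bar y_k + \rho\,\cdot)$, which is bounded uniformly in $i$ by the previous step. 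This yields a local Harnack inequality $\max_{B_{2\rho}(\bar y_k)} v_i \leq C \min_{B_{2\rho}(\bar y_k)} v_i$; chaining along the finitely many covering balls gives the global Harnack estimate on the annulus, and undoing the rescaling $y=(x-x_i)/r$ returns the statement of the lemma.

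The main technical obstacle is the verification that each residual $H_i^{(k)}$ satisfies the analogue of \eqref{eq:hsatisfy} uniformly in $i$, so that Proposition \ref{pro:JLXHarnack} really applies to the localized equation. The Harnack-type estimate for $H_i^{(k)}$ reduces to the elementary fact that $|y_1-\zeta|/|y_2-\zeta|$ stays in a fixed interval such as $[1/5,5]$ whenever $y_1,y_2\in B_{2\rho}(\bar y_k)$ and $\zeta\notin B_{3\rho}(\bar y_k)$, combined with the analogous assumption on $h_i$ itself. The gradient conditions in \eqref{eq:hsatisfy} are obtained by differentiating under the integral sign: each derivative of the kernel $|y-\zeta|^{2\sigma-n}$ costs a factor of order $\rho^{-1}$, which is exactly the form of the condition. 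One subtlety worth flagging is that the $L^1$ mass of $V_iv_i$ near the singular region $\zeta\approx 0$ need not be bounded — the bound $v_i^{p_i}(\zeta)\leq A_1^{p_i}|\zeta|^{-2\sigma p_i/(p_i-1)}$ implied by \eqref{eq:Harnack1} is generally not integrable — but this is harmless, since the Harnack and derivative conditions on $H_i^{(k)}$ only involve pointwise comparisons of the kernel at points well separated from $\zeta$, and never its total mass.
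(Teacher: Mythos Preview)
Your proposal is correct and follows essentially the same route as the paper's proof: rescale by $r$ to reduce to a unit-scale annulus, write the rescaled equation as a linear integral equation with bounded potential $V_i = r^{2\sigma}a_i(x_i+r\,\cdot) + v_i^{p_i-1}$ and a nonnegative residual inhomogeneity, verify that the residual satisfies the Harnack-type hypothesis via kernel comparison and the nonnegativity of the integrand, and then cover the annulus by a fixed finite chain of balls on which Proposition~\ref{pro:JLXHarnack} applies. The only cosmetic difference is that the paper performs a single split (into $\Omega_1 = B_{5/2}\setminus B_{1/4}$ and its complement) before covering by balls, whereas you split separately on each covering ball; your version is slightly more explicit, and your remark on the derivative conditions for $H_i^{(k)}$ and on the irrelevance of the total mass near $\zeta=0$ are points the paper leaves implicit.
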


\begin{proof}
For $0<r<{\bar{r}}/{3}$, define
$$
w_{i}(x):=r^{2 \sigma /(p_{i}-1)} u_{i}(r x+x_{i})
$$
and
$$
\tilde{h}_i(x):=r^{2 \sigma /(p_{i}-1)} h_{i}(r x+x_{i}).
$$
By the equation of $u_i$, we have
$$
w_{i}(x)=\int_{B_{3/r}} \frac{r^{2\sigma}a_{i}(r {y}+x_{i}) w_{i}({y})+w_{i}({y})^{p_{i}}}{|x-y|^{n-2 \sigma}} \,\ud y+\tilde{h}_i(x) \quad \text{ in }\, B_{3}.
$$
Since $x_{i} \rightarrow 0$ is an isolated blow up point of $u_{i}$,
$$
w_{i}(x) \leq A_{1}|x|^{-2 \sigma /(p_{i}-1)} \quad \text { for all }\, x \in B_{3}.
$$
Set $\Omega_{1}=B_{5 / 2} \backslash B_{1 / 4}$, $\Omega_{2}=B_{2} \backslash B_{1 / 2}$ and $V_{i}(y)=r^{2\sigma}a_i(ry+x_i)+w_{i}(y)^{p_{i}-1} .$ Thus $w_i$ satisfies the linear equation
$$
w_{i}(x)=\int_{\Omega_1} \frac{V_i(y)w_i(y)}{|x-y|^{n-2 \sigma}} \,\ud y+\bar{h}_i(x) \quad \text{ in }\, \Omega_1,
$$
where
$$
\bar{h}_{i}(x)=\tilde{h}_{i}(x)+\int_{B_{3 / r} \backslash \Omega_{1}} \frac{V_i(y)w_i(y)}{|x-y|^{n-2 \sigma}} \,\ud y.
$$
It is easily seen that $\|V_{i}\|_{L^{\infty}(\Omega_1)} \leq C.$ Since $a_{i}$ and $w_{i}$ are nonnegative, it follows from \eqref{eq:Harnackhi} that $\max _{\bar{\Omega}_{2}} \bar{h}_{i} \leq C \min _{\bar{\Omega}_{2}} \bar{h}_i$. Covering $\Omega_1$ by finite many balls and applying Proposition \ref{pro:JLXHarnack} to $w_{i}$ yields $\max _{\bar{\Omega}_{2}} w_{i} \leq C \min _{\bar{\Omega}_{2}} w_{i}$, where $C>0$ depends only on $n, \sigma, A_{1}, A_2, \bar{r},$ and $\sup _{i}\|a_{i}\|_{L^{\infty}(B_{\bar{r}}(x_{i}))}$. Lemma \ref{lem:Harnack} follows after rescaling back to $u_{i}$.
\end{proof}

\begin{proposition}\label{pro:Blowup1}
Assume the assumptions in Lemma \ref{lem:Harnack}. Suppose that $\|a_{i}\|_{C^{2}(B_{3})} \leq A_{0} .$ Then for any $R_{i} \rightarrow \infty$ and $\varepsilon_{i} \rightarrow 0^{+}$, we have, after passing to a subsequence (still denoted as $\{u_{i}\}$, $\{x_{i}\}$, etc.), that
\begin{equation}\label{eq:B1-1}
\|m_{i}^{-1} u_{i}(m_{i}^{-(p_{i}-1) / 2 \sigma} \cdot+x_{i})-(1+\bar{c}|\cdot|^{2})^{(2 \sigma-n) / 2}\|_{C^{2}(B_{2 R_{i}}(0))} \leq \varepsilon_{i},
\end{equation}
\begin{equation}\label{eq:B1-2}
r_i:=R_{i} m_{i}^{-(p_{i}-1)/2 \sigma} \rightarrow 0 \quad \text { as } \, i \rightarrow \infty,
\end{equation}
where $m_{i}=u_{i}(x_{i})$ and $\bar{c}$ depends only on $n$ and $\sigma$.
\end{proposition}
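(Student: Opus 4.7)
The plan is the standard rescaling at the blow-up point followed by passage to the limit in the integral equation. Set $\lambda_i := m_i^{(p_i-1)/2\sigma}$ and define
\begin{equation*}
v_i(y) := m_i^{-1} u_i\bigl(x_i + \lambda_i^{-1} y\bigr), \qquad y \in \Omega_i := \lambda_i(B_3 - x_i).
\end{equation*}
A change of variables in \eqref{eq:intui=} gives the rescaled equation
\begin{equation*}
v_i(y) = \int_{\Omega_i} \frac{\lambda_i^{-2\sigma} a_i(x_i + \lambda_i^{-1} w)\, v_i(w) + v_i(w)^{p_i}}{|y - w|^{n - 2\sigma}}\,\ud w + \tilde h_i(y),
\end{equation*}
with $\tilde h_i(y) := m_i^{-1} h_i(x_i + \lambda_i^{-1} y)$. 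The nonlinear term is critical-invariant (up to the correction $\tau_i \to 0$), while the linear coefficient $\lambda_i^{-2\sigma} = m_i^{-(p_i - 1)}$ tends to $0$. Rescaling the isolated blow-up bound \eqref{eq:Harnack1} yields $v_i(w) \leq A_1 |w|^{-2\sigma/(p_i-1)}$, and $v_i(0) = 1$ is a local maximum.

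Next I would establish uniform $C^2_{\mathrm{loc}}(\mathbb{R}^n)$ compactness of $\{v_i\}$. The decay $v_i(w) \leq A_1 |w|^{-2\sigma/(p_i-1)}$ gives uniform $L^\infty$ bounds on every ball $B_R$ away from $0$, while the local-max property together with Lemma \ref{lem:Harnack} (applied on shrinking annuli) controls $v_i$ near $0$ by $v_i(0) = 1$. Before invoking Proposition \ref{pro:JLXSchauder} I would show $\tilde h_i \to 0$ locally uniformly: since $h_i \leq u_i$ by \eqref{eq:intui=}, evaluating at a point where the isolated blow-up bound is effective and then applying the Harnack hypothesis \eqref{eq:Harnackhi} gives $\|h_i\|_{L^\infty(B_2)} \leq C$, so $\tilde h_i = m_i^{-1} h_i(\cdots) \to 0$. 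Writing the equation for $v_i$ in the linear form \eqref{eq:LIE} with potential $V_i(w) := \lambda_i^{-2\sigma} a_i(x_i + \lambda_i^{-1} w) + v_i(w)^{p_i - 1}$ (uniformly bounded on $B_{2R}$) and an $h$-term absorbing $\tilde h_i$ plus the tail contribution from $\Omega_i \setminus B_{2R}$, Proposition \ref{pro:JLXSchauder} yields $\|v_i\|_{C^2(B_R)} \leq C(R)$. Arzelà--Ascoli with a diagonal extraction produces a nonnegative $v \in C^2_{\mathrm{loc}}(\mathbb{R}^n)$ with $v(0) = 1$ and $\nabla v(0) = 0$ such that $v_i \to v$ in $C^2_{\mathrm{loc}}$.

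To pass to the limit in the integral equation, the tail of the nonlinear term is controlled by $v_i(w)^{p_i} \leq A_1^{p_i} |w|^{-2\sigma p_i /(p_i - 1)}$, whose exponent tends to $-(n + 2\sigma)$, giving uniform integrability against $|y - w|^{2\sigma - n}$ at infinity; the linear contribution vanishes because $\lambda_i^{-2\sigma} \to 0$ dominates $a_i v_i$. Since $\Omega_i$ exhausts $\mathbb{R}^n$, the limit $v$ satisfies
\begin{equation*}
v(y) = \int_{\mathbb{R}^n} \frac{v(w)^{(n + 2\sigma)/(n - 2\sigma)}}{|y - w|^{n - 2\sigma}}\,\ud w, \quad v \geq 0 \text{ in } \mathbb{R}^n.
\end{equation*}
By the classification theorems cited in the introduction (\cite{CGSAsymptotic1989, WXClassification1999, CLOClassification2006, JLXOnI2014}), $v$ is a standard bubble, and $v(0) = 1$ together with the critical point at $0$ force $v(y) = (1 + \bar c |y|^2)^{(2\sigma - n)/2}$ with $\bar c = \bar c(n, \sigma)$. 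This proves \eqref{eq:B1-1} on any fixed ball, and a standard diagonal argument then replaces the given $R_i$ by a subsequence so that convergence holds in $C^2(B_{2 R_i})$ within tolerance $\varepsilon_i$; since $\lambda_i \to \infty$ (because $p_i - 1$ is bounded away from $0$ and $m_i \to \infty$), slowing $R_i$ further if needed also gives $r_i = R_i \lambda_i^{-1} \to 0$, which is \eqref{eq:B1-2}.

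The hard part will be verifying that the limit solves the \emph{global} integral equation on $\mathbb{R}^n$. Two points require care: the expansion $\Omega_i \nearrow \mathbb{R}^n$ needs uniform integrability of the tail (handled by the decay of $v_i$), and the disappearance of $\tilde h_i$ needs an $i$-independent bound on $h_i$ obtained via the isolated blow-up bound combined with the Harnack-type hypothesis \eqref{eq:Harnackhi}. Once the global integral equation is in hand, invoking the classification to identify $v$ as a canonical bubble is routine.
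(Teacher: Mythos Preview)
Your proposal is correct and follows essentially the same rescaling-and-classification strategy as the paper. Two minor points: (i) the paper obtains the uniform $L^\infty$ bound for $v_i$ on $B_1$ not by iterating Harnack on shrinking annuli but by a clean contradiction---if $\sup_{B_1} v_i$ were attained at $\bar x_i$ and tended to $\infty$, a second rescaling at $\bar x_i$ together with Proposition~\ref{pro:JLXHarnack} would force $1 = v_i(0) \leq C\,\sup_{B_1} v_i \cdot (\sup_{B_1} v_i)^{-1} \to 0$; (ii) your tail exponent is off by a factor of two (it is $-2\sigma p_i/(p_i-1) \to -(n+2\sigma)/2$, not $-(n+2\sigma)$), though this is still strictly below $-2\sigma$ and hence still integrable against $|y-w|^{2\sigma-n}$, so the argument goes through unchanged.
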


\begin{proof}
Define
$$
\varphi_{i}(x):=m_{i}^{-1} u_{i}(m_{i}^{-(p_{i}-1) / 2 \sigma} x+x_{i})\quad \text { for } |x|<2m_{i}^{(p_{i}-1) / 2 \sigma},
$$
and
$$
\hat{h}_{i}(x):=\int_{B_{3} \backslash B_{2}}  \frac{a_i(y) u_i(y)+u_i(y)^{p_i}}{|x-y|^{n-2\sigma}} \,\ud y+h_{i}(x).
$$
It is easy to see that $\hat{h}_{i}$ also satisfies \eqref{eq:Harnackhi} for all $x \in B_{1}$. By the equation of $u_i$, we have
\begin{equation}\label{eq:B1-3}
\varphi_{i}(x)=\int_{B_{2m_{i}^{(p_{i}-1) / 2 \sigma}}} \frac{m_{i}^{1-p_{i}}a_{i}(m_{i}^{-(p_{i}-1) / 2 \sigma} {y}+x_{i}) \varphi_{i}({y})+\varphi_{i}({y})^{p_{i}}}{|x-y|^{n-2 \sigma}} \,\ud y+\tilde{h}_{i}(x),
\end{equation}
\begin{equation}\label{eq:B1-4}
\varphi_{i}(0)=1, \quad \nabla \varphi_{i}(0)=0, \quad 0<\varphi_{i}(x) \leq A_{1}|x|^{-2 \sigma /(p_{i}-1)},
\end{equation}
where $\tilde{h}_{i}(x):=m_{i}^{-1} \hat{h}_{i}(m_{i}^{-(p_{i}-1) / 2 \sigma} x+x_{i})$.

Since $\max _{\partial B_{1}} \hat{h}_{i} \leq \max _{\partial B_{1}} u_{i} \leq A_{3}$, by \eqref{eq:Harnackhi} for $\hat{h}_{i}$ we have
\begin{equation}\label{eq:tildehto0}
\tilde{h}_{i} \rightarrow 0 \quad \text { in } C_{l o c}^{2}(\mathbb{R}^{n}) \quad \text { as }\, i \rightarrow \infty.
\end{equation}
For any $R>0,$ we claim that
\begin{equation}\label{eq:B1-5}
\|\varphi_{i}\|_{C^{2, \alpha}(B_{R})} \leq C(R)
\end{equation}
for some $\alpha \in(0,1)$ and all sufficiently large $i$. Indeed, by \eqref{eq:B1-4} and Proposition \ref{pro:JLXSchauder}, it is sufficient to prove that $\varphi_{i} \leq C$ in $B_{1}$. If $\varphi_{i}(\bar{x}_{i})=\sup _{B_{1}} \varphi_{i} \rightarrow \infty,$ let
$$
\tilde{\varphi}_{i}(z):=\varphi_{i}(\bar{x}_{i})^{-1} \varphi_{i}(\varphi_{i}(\bar{x}_{i})^{-(p_{i}-1) / 2 \sigma} z+\bar{x}_{i}) \leq 1 \quad \text { for }\, |z| \leq \frac{1}{2} \varphi_{i}(\bar{x}_{i})^{(p_{i}-1) / 2 \sigma}.
$$
By \eqref{eq:B1-4},
$$
\tilde{\varphi}_{i}(z_{i})=\varphi_{i}(\bar{x}_{i})^{-1} \varphi_{i}(0) \rightarrow 0
$$
for $z_{i}=-\varphi_{i}(\bar{x}_{i})^{(p_{i}-1) / 2 \sigma} \bar{x}_{i} .$ Since $\varphi_{i}(\bar{x}_{i}) \leq A_{1}|\bar{x}_{i}|^{-2 \sigma /(p_{i}-1)},$ we have $|z_{i}| \leq A_{1}^{(p_{i}-1) /2 \sigma} .$ Hence, we can find $t>0$ independent of $i$ such that such that $z_{i} \in B_{t}$. Since $\tilde{\varphi}_{i}$ satisfies a similar equation to \eqref{eq:B1-3}, applying Proposition \ref{pro:JLXHarnack} to $\tilde{\varphi}_{i}$ in $B_{2 t}$, we conclude that
$$
1=\tilde{\varphi}_{i}(0) \leq C \tilde{\varphi}_{i}(z_{i}) \rightarrow 0,
$$
which is impossible. Thus \eqref{eq:B1-5} is valid.

It follows from \eqref{eq:B1-5} that, after passing to a subsequence if necessary,
\begin{equation}\label{eq:varitovar}
\varphi_{i} \rightarrow \varphi\quad \text{ in }\, C_{l o c}^{2}(\mathbb{R}^{n})
\end{equation}
for some $\varphi \in C^{2}(\mathbb{R}^{n}) .$ For any fixed $x$ and $R>2|x|$, by \eqref{eq:B1-4} we have
\begin{equation}\label{eq:O(R)}
\begin{aligned}
&\int_{B_{2 m_{i}^{(p_{i}-1) / 2\sigma}}\backslash B_R}\frac{m_{i}^{1-p_{i}}a_{i}(m_{i}^{-(p_{i}-1) / 2 \sigma} {y}+x_{i}) \varphi_{i}({y})+\varphi_{i}({y})^{p_{i}}}{|x-y|^{n-2 \sigma}} \,\ud y\\
\leq& C  \int_{B_{2 m_{i}^{(p_{i}-1) / 2\sigma}}\backslash B_R} |y|^{2\sigma-n}(m_{i}^{1-p_{i}}|y|^{-\frac{2\sigma}{p_i-1}}+|y|^{-\frac{2\sigma p_{i}}{p_{i}-1}}) \,\ud y\\
=&m_{i}^{1-p_{i}}O(R^{-\frac{n-6\sigma}{2}+O(\tau_{i})})+O(R^{-\frac{n-2\sigma}{2}+O(\tau_{i})}).
\end{aligned}
\end{equation}
Combining \eqref{eq:tildehto0}, \eqref{eq:varitovar} and \eqref{eq:O(R)} together, by \eqref{eq:B1-3} we have that for any fixed $R>0$ and $x \in B_{R / 2}$,
\begin{equation}
\varphi(x)=\int_{B_{R}} \frac{\varphi(y)^{\frac{n+2\sigma}{n-2\sigma}}}{|x-y|^{n-4}} \,\ud y+O(R^{-\frac{n-2\sigma}{2}}).
\end{equation}
Sending $R \rightarrow \infty$, it follows from Lebesgue's monotone convergence theorem that
$$
\varphi(x)=\int_{\mathbb{R}^{n}} \frac{\varphi(y)^{\frac{n+2\sigma}{n-2\sigma}}}{|x-y|^{n-4}} \,\ud y, \quad x \in \mathbb{R}^{n}.
$$
It follows the classification theorem in \cite{CLOClassification2006} or \cite{LRemark2004} that
$$
\varphi(x)=(1+\bar{c}|x|^{2})^{-\frac{n-2 \sigma}{2}}
$$
with $\bar{c}=(\frac{\pi^{n / 2} \Gamma(\sigma)}{\Gamma({n}/{2}+\sigma)})^{1 / \sigma},$ where we used that $\varphi(0)=1$ and $\nabla \varphi(0)=0$. Proposition \ref{pro:Blowup1} follows immediately.
\end{proof}

Note that since passing to subsequences does not affect our proofs, in the rest of the paper we will always choose $R_{i} \rightarrow \infty$ with $R_i^{\tau_i}=1+o(1)$ first, and then $\varepsilon_{i} \rightarrow 0^{+}$ as small as we wish (depending on $R_{i}$) and then choose our subsequence $\{u_{i}\}$ to work with.

Without loss of generality, we assume $\bar{r}=2$ to the end of the section.

\begin{proposition}\label{pro:Blowup2}
Under the hypotheses of Lemma \ref{lem:Harnack}, there exists some positive constant $C=C(n, \sigma, A_0, A_{1})$ such that,
$$
u_{i}(x) \geq C^{-1} m_{i}(1+\bar{c} m_{i}^{(p_{i}-1) / \sigma}|x-x_{i}|^{2})^{(2 \sigma-n) / 2}, \quad |x-x_{i}| \leq 1.
$$
In particular, for any $e \in \mathbb{R}^{n}$ with $|e|=1,$ we have
$$
u_{i}(x_{i}+e) \geq C^{-1} m_{i}^{-1+(n-2 \sigma)\tau_{i} / 2 \sigma },
$$
where $\tau_{i}=(n+2 \sigma) /(n-2 \sigma)-p_{i}$.
\end{proposition}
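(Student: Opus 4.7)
I would split $\overline{B}_1(x_i)$ into the inner bubble region $B_{r_i}(x_i)$, with $r_i=R_im_i^{-(p_i-1)/(2\sigma)}$ as in \eqref{eq:B1-2}, and the outer annulus $r_i\le|x-x_i|\le 1$. A useful preliminary observation is that on the outer annulus $\bar{c}m_i^{(p_i-1)/\sigma}|x-x_i|^2\ge \bar{c}R_i^2\to\infty$, so up to a universal constant the target function $m_i(1+\bar{c}m_i^{(p_i-1)/\sigma}|x-x_i|^2)^{(2\sigma-n)/2}$ is comparable there to
$$m_i^{1+(p_i-1)(2\sigma-n)/(2\sigma)}|x-x_i|^{2\sigma-n}=m_i^{-1+(n-2\sigma)\tau_i/(2\sigma)}|x-x_i|^{2\sigma-n},$$
while on $B_{r_i}(x_i)$ it stays between universal multiples of $m_i$.

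\textbf{Inner region.} On $B_{r_i}(x_i)$ the bound follows from Proposition \ref{pro:Blowup1} alone. Writing $z=m_i^{(p_i-1)/(2\sigma)}(x-x_i)$, \eqref{eq:B1-1} gives $|m_i^{-1}u_i(x)-(1+\bar{c}|z|^2)^{(2\sigma-n)/2}|\le\varepsilon_i$, and since we are free to choose $\varepsilon_i$ much smaller than $R_i^{-(n-2\sigma)}$ (as noted right after Proposition \ref{pro:Blowup1}), the error is absorbed and we obtain $u_i(x)\ge \tfrac{1}{2}m_i(1+\bar{c}|z|^2)^{(2\sigma-n)/2}$ throughout $B_{r_i}(x_i)$, which is exactly the claimed lower bound written in the original coordinates.

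\textbf{Outer region.} For $r_i\le|x-x_i|\le 1$ I would discard the nonnegative contributions $a_iu_i$ and $h_i$ from \eqref{eq:intui=} and keep only the Riesz potential of $u_i^{p_i}$ restricted to $B_{r_i}(x_i)$. Since $|x-y|\le|x-x_i|+r_i\le 2|x-x_i|$ for $y\in B_{r_i}(x_i)$, this gives
$$u_i(x)\ge 2^{2\sigma-n}|x-x_i|^{2\sigma-n}\int_{B_{r_i}(x_i)}u_i(y)^{p_i}\,\ud y.$$
The substitution $y=x_i+m_i^{-(p_i-1)/(2\sigma)}z$, together with $u_i(y)=m_i\varphi_i(z)$ and the exponent identity
$$p_i-\frac{(p_i-1)n}{2\sigma}=-1+\frac{(n-2\sigma)\tau_i}{2\sigma},$$
reduces the above integral to $m_i^{-1+(n-2\sigma)\tau_i/(2\sigma)}\int_{B_{R_i}}\varphi_i(z)^{p_i}\,\ud z$, and by \eqref{eq:varitovar} this last integral is bounded below by a positive constant independent of $i$, because $\varphi_i$ converges locally uniformly to $(1+\bar{c}|\cdot|^2)^{-(n-2\sigma)/2}$, whose $L^{p_i}$-mass on $B_1$ is strictly positive. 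This matches, up to a universal factor, the outer-region target identified in the first paragraph.

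\textbf{Conclusion and obstacle.} Combining the two regimes yields the first inequality, and specializing to $|x-x_i|=1$ gives the second. The only delicate step is the one-line exponent identity above, which relies on $p_i-1=\tfrac{4\sigma}{n-2\sigma}-\tau_i$; apart from this bookkeeping the argument is a single-shot Riesz-potential lower bound built directly on Proposition \ref{pro:Blowup1}, and in particular uses neither Lemma \ref{lem:Harnack} nor any further blow-up analysis.
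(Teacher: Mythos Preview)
Your proof is correct and takes essentially the same approach as the paper: both drop the nonnegative terms $a_iu_i$ and $h_i$ from \eqref{eq:intui=}, restrict the remaining Riesz integral to $B_{r_i}(x_i)$, and use the change of variables together with Proposition~\ref{pro:Blowup1} to obtain the outer bound, while the inner bound comes directly from Proposition~\ref{pro:Blowup1}. Your write-up is in fact more detailed than the paper's (which compresses the outer-region computation into a single displayed inequality), but the underlying idea is identical.
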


\begin{proof}
By change of variables and using Proposition \ref{pro:Blowup1}, some calculation leads to that, for $r_{i} \leq|x-x_{i}| \leq 1$,
\begin{equation}\label{eq:B2-1}
u_i(x) \geq \int_{|y-x_{i}| \leq r_{i}} \frac{u_{i}(y)^{p_{i}}}{|x-y|^{n-2 \sigma}} \,\ud y \geq \frac{1}{4} C m_{i} U(m_{i}^{(p_{i}-1) / 2 \sigma}(x-x_{i})),
\end{equation}
where $U(z)=(1+\bar{c}|z|^{2})^{(2 \sigma-n) / 2}$. The proposition follows immediately from the above and Proposition \ref{pro:Blowup1}.
\end{proof}

\begin{lemma}\label{lem:Blowup3}
Under the hypotheses of Lemma \ref{lem:Harnack}, and in addition that $x_{i} \rightarrow 0$ is also an isolated simple blow up point with the constant $\rho$, there exists $\delta_{i}>0$, $\delta_{i}=O(R_{i}^{-2 \sigma+o(1)}),$ such that
$$
u_{i}(x) \leq C u_{i}(x_{i})^{-\lambda_{i}}|x-x_{i}|^{2 \sigma-n+\delta_i} \quad \text { for all }\, r_{i} \leq|x-x_{i}| \leq 1,
$$
where $\lambda_{i}=(n-2 \sigma-\delta_i)(p_{i}-1) / 2 \sigma-1$ and $C>0$ depends only on $n, \sigma, A_{0}, A_{1}$ and $\rho$.
\end{lemma}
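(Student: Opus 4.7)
The plan is to bound $u_i$ on the annulus $\{r_i \le |x-x_i|\le 1\}$ by comparing it with a singular supersolution of the form $A_i|x-x_i|^{2\sigma-n+\delta_i}$, once the isolated simple blow up assumption is used to show that the effective linear potential $V_i := a_i + u_i^{p_i-1}$ is very small on this annulus.

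First, I would use the isolated simple hypothesis together with Proposition \ref{pro:Blowup1} to show that $\bar w_i(r) = r^{2\sigma/(p_i-1)}\bar u_i(r)$ is monotone decreasing on $(r_i,\rho)$: by \eqref{eq:B1-1} the function $\bar w_i$ already has a critical point inside $(0,r_i)$ coming from the bubble, and the isolated simple definition forbids any further critical point in $(0,\rho)$. Evaluating the bubble profile at $|x-x_i|=r_i$ gives $\bar w_i(r_i) \le C R_i^{-(n-2\sigma)/2 + o(1)}$, and Lemma \ref{lem:Harnack} then upgrades this to the pointwise bound $u_i(x) \le C |x-x_i|^{-2\sigma/(p_i-1)} R_i^{-(n-2\sigma)/2 + o(1)}$ on the annulus. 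In particular $\sup_{r_i\le |x-x_i|\le\rho} |x-x_i|^{2\sigma}V_i(x) \le \varepsilon_i := C R_i^{-2\sigma+o(1)}$.

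Next, I would pick $\delta_i$ to be a suitable multiple of $\varepsilon_i$ (so that $\delta_i = O(R_i^{-2\sigma+o(1)})$) and introduce the comparison function
$$
\psi_i(x) := A_i|x-x_i|^{2\sigma - n+\delta_i} + B_i,
$$
choosing $A_i$ so that $\psi_i \ge u_i$ on $\partial B_{r_i}(x_i)$: using the bubble estimate $u_i \le C m_i R_i^{-(n-2\sigma)}$ on that sphere together with $r_i = R_i m_i^{-(p_i-1)/2\sigma}$, a direct algebraic computation yields $A_i \sim m_i^{-\lambda_i}R_i^{-\delta_i}$ with $\lambda_i = (n-2\sigma-\delta_i)(p_i-1)/2\sigma - 1$, exactly the exponent in the conclusion. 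The constant $B_i$ is chosen so that $\psi_i$ dominates $u_i$ on $\partial B_1(x_i)$ and absorbs the bounded, smooth contributions to the integral equation coming from $B_3\setminus B_1(x_i)$ together with $h_i$. Using the Riesz composition identity one checks
$$
\int_{B_1(x_i)\setminus B_{r_i}(x_i)}\frac{V_i(y)\psi_i(y)}{|x-y|^{n-2\sigma}}\,\ud y \le C(\delta_i)\,\varepsilon_i\, \psi_i(x) + \text{lower order},
$$
so $\psi_i$ is a supersolution of the integral equation on the annulus as soon as $C(\delta_i)\varepsilon_i<1$. A Neumann series iteration applied to $\psi_i - u_i$ then serves as an integral-equation maximum principle and gives $u_i\le\psi_i$ on $\{r_i\le |x-x_i|\le 1\}$, which is exactly the bound claimed.

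The principal obstacle is making the supersolution estimate quantitative: one must track how the constant $C(\delta_i)$ in the Riesz composition inequality behaves as $\delta_i\to 0$, and verify that $C(\delta_i)\varepsilon_i$ still tends to $0$ under the choice $\delta_i \sim \varepsilon_i = R_i^{-2\sigma+o(1)}$. A secondary subtlety is that the nonlocal contributions from $B_3\setminus B_1(x_i)$ and from the inhomogeneity $h_i$ must be absorbed into $B_i$ without disturbing the singular $|x-x_i|^{2\sigma-n+\delta_i}$ profile; this is where one relies on the Harnack-type hypothesis \eqref{eq:Harnackhi} for $h_i$.
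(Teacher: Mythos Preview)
Your outline captures the correct overall strategy---using the isolated simple hypothesis to show that $|x-x_i|^{2\sigma}V_i(x) \le \varepsilon_i := CR_i^{-2\sigma+o(1)}$ on the annulus, and then comparing $u_i$ with a singular supersolution via an integral maximum principle. This is exactly the route the paper takes. However, there is a genuine gap in your argument that leaves the proof incomplete.

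The gap concerns the constant $B_i$. You choose $B_i$ so that $\psi_i$ dominates $u_i$ on $\partial B_1(x_i)$ and absorbs the far-field and $h_i$ contributions, which forces $B_i \gtrsim \max_{\partial B_1(x_i)} u_i$. But the conclusion of the lemma is $u_i \le C m_i^{-\lambda_i}|x-x_i|^{2\sigma-n+\delta_i}$, so after the comparison you still need $B_i \le C m_i^{-\lambda_i}$. Nothing in your outline supplies this: a priori, $\max_{\partial B_1(x_i)} u_i$ is controlled only by the isolated hypothesis, not by any negative power of $m_i$. The paper closes this circle with a separate bootstrap (its Step~4): the outer-boundary term in the barrier is taken to be $M_i\rho_1^{\delta_i}|x-x_i|^{-\delta_i}$ (with $M_i \sim \max_{\partial B_{\rho_1}(x_i)} u_i$) rather than a bare constant, and after proving $u_i\le f_i$ one re-invokes the monotonicity of $r^{2\sigma/(p_i-1)}\bar u_i(r)$ at a fixed intermediate radius $\theta\ll\rho_1$. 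Comparing the values at $\rho_1$ and $\theta$ and absorbing the $M_i$ term (possible because $\theta^{2\sigma/(p_i-1)-\delta_i}\ll \rho_1^{2\sigma/(p_i-1)-\delta_i}$) forces $M_i \le C m_i^{-\lambda_i}$. This self-improving step is essential and is missing from your plan.

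Two smaller points. First, a pure constant $B_i$ is not the right barrier term: the paper uses the nearly-constant $M_i\rho_1^{\delta_i}|x-x_i|^{-\delta_i}$, which has the correct homogeneity so that the Riesz-convolution supersolution inequality holds uniformly on the annulus once $\rho_1$ is fixed small and $\delta_i=O(R_i^{-2\sigma})$. Second, the inner-ball contribution $\int_{|y-x_i|\le r_i} a_i(y)u_i(y)|x-y|^{2\sigma-n}\,\ud y$ produces a term of order $m_i^{(n-(n-2\sigma)p_i)/2\sigma}|x-x_i|^{4\sigma-n}$, which is not absorbed by $A_i|x-x_i|^{2\sigma-n+\delta_i}$; the paper therefore includes a third term $B m_i^{(n-(n-2\sigma)p_i)/2\sigma}|x-x_i|^{4\sigma-n}$ in the barrier $f_i$.
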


\begin{proof}
The proof will be divided into four steps.

{\bf Step 1.}\, From Proposition \ref{pro:Blowup1}, we have
\begin{equation}\label{eq:B3-1}
u_{i}(x) \leq C u_{i}(x_{i}) R_{i}^{2 \sigma-n} \quad \text { for all }\,|x-x_{i}|=r_{i}=R_{i} m_{i}^{-(p_{i}-1)/2 \sigma}.
\end{equation}
Let $\bar{u}_{i}(r)$ be defined as \eqref{22}. It follows from the assumption of isolated simple blow up points and Proposition \ref{pro:Blowup1} that
\begin{equation}\label{eq:B3-2}
r^{2 \sigma /(p_{i}-1)} \bar{u}_{i}(r)\, \text { is strictly decreasing for }\, r_{i}<r<\rho.
\end{equation}
By Lemma \ref{lem:Harnack}, \eqref{eq:B3-1} and \eqref{eq:B3-2}, we have, for all $r_{i}<|x-x_{i}|<\rho$,
\be\label{eq:sameargu}
\begin{aligned}
|x-x_{i}|^{2 \sigma /(p_{i}-1)} u_{i}(x) & \leq C|x-x_{i}|^{2 \sigma /(p_{i}-1)} \bar{u}_{i}(|x-x_{i}|) \\
& \leq C r_{i}^{2 \sigma /(p_{i}-1)} \bar{u}_{i}(r_{i})\\
&\leq C R_{i}^{(2 \sigma-n)/2},
\end{aligned}
\ee
where we used $R_{i}^{\tau_{i}}=1+o(1)$. Thus
\begin{equation}\label{eq:B3-3}
u_{i}(x)^{p_{i}-1} \leq C R_{i}^{-2 \sigma}|x-x_{i}|^{-2 \sigma}\quad \text { for all }\, r_{i} \leq|x-x_{i}|<\rho.
\end{equation}

{\bf Step 2.}\, Define
$$
\mathfrak{L}_i\phi(y):=\int_{B_3} \frac{[a_{i}(z)+u_{i}(z)^{p_{i}-1}] \phi(z)}{|y-z|^{n-2 \sigma}} \,\ud z.
$$
Thus
$$
u_{i}=\mathfrak{L}_{i} u_{i}+h_i.
$$
Note that for $2 \sigma<\mu<n$ and $0<|x|<2$,
\begin{equation}\label{eq:usefuline}
\begin{aligned}
\int_{B_3} \frac{1}{|x-y|^{n-2 \sigma}|y|^{\mu}} \,\ud y&\leq\int_{\mathbb{R}^n} \frac{1}{|x-y|^{n-2 \sigma}|y|^{\mu}} \,\ud y\\
&=|x|^{2 \sigma-n} \int_{\mathbb{R}^n} \frac{1}{||x|^{-1} x-|x|^{-1} y|^{n-2 \sigma}|y|^{\mu}} \,\ud y \\
&=|x|^{-\mu+2 \sigma} \int_{\mathbb{R}^{n}} \frac{1}{||x|^{-1} x-z|^{n-2 \sigma}|z|^{\mu}}\,\ud z \\
& \leq C\Big(\frac{1}{n-\mu}+\frac{1}{\mu-2 \sigma}+1\Big)|x|^{-\mu+2 \sigma},
\end{aligned}
\end{equation}
where we did the change of variables $y=|x| z .$ By \eqref{eq:B3-3}, we have
$$
\begin{aligned}
&\int_{r_{i}<|y-x_{i}|<\rho} \frac{[a_{i}(y)+u_{i}(y)^{p_{i}-1}]|y-x_{i}|^{-\delta_{i}}}{|x-y|^{n-2 \sigma}} \,\ud y\\
\leq&C\int_{\mathbb{R}^n} \frac{1}{|x-y|^{n-2 \sigma}|y-x_{i}|^{\delta_i}} \,\ud y+CR_{i}^{-2 \sigma}\int_{\mathbb{R}^n} \frac{1}{|x-y|^{n-2 \sigma}|y-x_{i}|^{2 \sigma+\delta_i}} \,\ud y\\
=&C\int_{\mathbb{R}^n} \frac{1}{|(x-x_i)-z|^{n-2 \sigma}|z|^{\delta_i}} \,\ud z+CR_{i}^{-2 \sigma}\int_{\mathbb{R}^n} \frac{1}{|(x-x_i)-z|^{n-2 \sigma}|z|^{2 \sigma+\delta_i}} \,\ud z\\
\leq&C|x-x_i|^{-\delta_i+2\sigma}+CR_{i}^{-2 \sigma}|x-x_i|^{-\delta_i}.
\end{aligned}
$$
Similarly,
$$
\int_{r_{i}<|y-x_{i}|<\rho} \frac{[a_{i}(y)+u_{i}(y)^{p_{i}-1}]|y-x_{i}|^{2 \sigma-n+\delta_{i}}}{|x-y|^{n-2 \sigma}} \,\ud y
\leq C|x-x_{i}|^{4 \sigma-n+\delta_{i}}+C R_{i}^{-2 \sigma}|x-x_{i}|^{2 \sigma-n+\delta_{i}},
$$
$$
\int_{r_{i}<|y-x_{i}|<\rho} \frac{[a_{i}(y)+u_{i}(y)^{p_{i}-1}]|y-x_{i}|^{4 \sigma-n}}{|x-y|^{n-2 \sigma}} \,\ud y
\leq C|x-x_i|^{6\sigma-n}+CR_{i}^{-2 \sigma}|x-x_i|^{4\sigma-n}.
$$
Choosing $0< \rho_1\leq \rho$ and $0<\delta_{i}=O(R_{i}^{-2 \sigma})$, such that for $r_i\leq |x-x_i|<\rho_1$,
\begin{equation}\label{eq:B3-4}
\int_{r_{i}<|y-x_{i}|<\rho_1} \frac{[a_{i}(y)+u_{i}(y)^{p_{i}-1}]|y-x_{i}|^{-\delta_{i}}}{|x-y|^{n-2 \sigma}} \,\ud y \leq \frac{1}{4}|x-x_{i}|^{-\delta_{i}},
\end{equation}
\begin{equation}\label{eq:B3-5}
\int_{r_{i}<|y-x_{i}|<\rho_1} \frac{[a_{i}(y)+u_{i}(y)^{p_{i}-1}]|y-x_{i}|^{2 \sigma-n+\delta_{i}}}{|x-y|^{n-2 \sigma}} \,\ud y \leq \frac{1}{4}|x-x_{i}|^{2 \sigma-n+\delta_{i}},
\end{equation}
and
\begin{equation}\label{eq:B3-6}
\int_{r_{i}<|y-x_{i}|<\rho_1} \frac{[a_{i}(y)+u_{i}(y)^{p_{i}-1}]|y-x_{i}|^{4 \sigma-n}}{|x-y|^{n-2 \sigma}} \,\ud y\leq \frac{1}{4}|x-x_{i}|^{4 \sigma-n}.
\end{equation}

Set
$$
M_{i}:=4 \cdot 2^{n-2 \sigma} \max \limits_{\partial B_{\rho_1}(x_{i})} u_{i}+2\max\limits_{\bar{B}_{\rho_1}(x_i)}h_i,
$$
$$
f_{i}(x):=M_{i} \rho_1^{\delta_{i}}|x-x_{i}|^{-\delta_{i}}+A m_{i}^{-\lambda_{i}}|x-x_{i}|^{2 \sigma-n+\delta_{i}}+Bm_i^{\frac{n-(n-2\sigma)p_i}{2\sigma}}|x-x_{i}|^{4\sigma-n},
$$
and
$$
\phi_{i}(x)=\left\{\begin{array}{ll}
f_{i}(x),\quad & r_{i}<|x-x_{i}|<\rho_1, \\
u_{i}(x),\quad & \text {otherwise},
\end{array}\right.
$$
where $A$ and $B$ will be chosen later.

By \eqref{eq:B3-4}, \eqref{eq:B3-5} and \eqref{eq:B3-6}, we have for $r_{i}<|x-x_{i}|<\rho_1$,
$$
\begin{aligned}
&\mathfrak{L}_{i} \phi_{i}(x)+h_i(x) \\
=&\int_{|y-x_{i}| \leq r_{i}}+\int_{r_{i}<|y-x_{i}|<\rho_1}+\int_{\rho_1 \leq|y-x_{i}|\leq 3} \frac{[a_{i}(y)+u_{i}(y)^{p_{i}-1}] \phi_{i}(y)}{|x-y|^{n-2 \sigma}}\,\ud y +h_i(x)\\
\leq& \int_{|y-x_{i}| \leq r_{i}} \frac{a_{i}(y)u_{i}(y)+u_{i}(y)^{p_{i}}}{|x-y|^{n-2 \sigma}} \,\ud y+\frac{f_{i}}{4}+\int_{\rho_1 \leq|y-x_{i}|\leq 3} \frac{a_{i}(y) u_{i}(y)+u_{i}(y)^{p_{i}}}{|x-y|^{n-2 \sigma}} \,\ud y+h_i(x).
\end{aligned}
$$

To estimate the first term, we use change of variables, Proposition \ref{pro:Blowup1} and the computations in \eqref{eq:B2-1} that,
$$
\begin{aligned}
&\int_{|y-x_{i}| \leq r_{i}} \frac{a_{i}(y) u_{i}(y)+u_{i}(y)^{p_{i}}}{|x-y|^{n-2 \sigma}} \,\ud y \\
\leq& Cm_i^{2-p_i}\int_{|z| \leq R_{i}} \frac{U(z)}{|m_{i}^{(p_{i}-1) / 2 \sigma}(x-x_{i})-z|^{n-2 \sigma}} \,\ud z+ C m_{i} \int_{|z| \leq R_{i}} \frac{U(z)^{p_{i}}}{|m_{i}^{(p_{i}-1) / 2 \sigma}(x-x_{i})-z|^{n-2 \sigma}} \,\ud z \\
\leq&Cm_i^{\frac{n-(n-2\sigma)p_i}{2\sigma}}|x-x_{i}|^{4\sigma-n}+ C m_{i} U(m_{i}^{(p_{i}-1) / 2 \sigma}(x-x_{i})).
\end{aligned}
$$
Since $|x-x_{i}|>r_{i},$ it follows that
$$
\begin{aligned}
m_{i} U(m_{i}^{(p_{i}-1) / 2 \sigma}(x-x_{i})) & \leq C m_{i}^{1-(p_{i}-1)(n-2 \sigma) / 2 \sigma}|x-x_{i}|^{2 \sigma-n} \\
& \leq C m_{i}^{-\lambda_{i}}|x-x_{i}|^{2 \sigma-n+\delta_{i}}.
\end{aligned}
$$

To estimate the third term, we let $\bar{x}=\rho_1 \frac{x-x_{i}}{|x-x_{i}|} \in \partial B_{\rho_1}(x_{i}),$ and then
\begin{equation}\label{eq:B3-7}
\begin{aligned}
\int_{\rho_1 \leq|y-x_{i}|\leq3} \frac{a_{i}(y) u_{i}(y)+u_{i}(y)^{p_{i}}}{|x-y|^{n-2 \sigma}}  \,\ud y&=\int_{\rho_1 \leq|y-x_{i}|\leq3} \frac{|\bar{x}-y|^{n-2 \sigma}}{|x-y|^{n-2 \sigma}} \frac{a_{i}(y)u_{i}(y)+u_{i}(y)^{p_{i}}}{|\bar{x}-y|^{n-2 \sigma}} \,\ud y \\
& \leq 2^{n-2 \sigma} \int_{\rho_1 \leq|y-x_{i}|\leq3} \frac{a_{i}(y)u_{i}(y)+u_{i}(y)^{p_{i}}}{|\bar{x}-y|^{n-2 \sigma}} \,\ud y \\
&\leq 2^{n-2 \sigma} u_{i}(\bar{x})\leq 2^{n-2 \sigma} \max _{\partial B_{\rho_1}(x_{i})} u_{i}\leq M_{i} / 4,
\end{aligned}
\end{equation}
where we have used \eqref{eq:intui=} and the positivity of $h_{i}$.

Therefore, we conclude that
\begin{equation}\label{eq:B3-8}
\mathfrak{L}_{i} \phi_{i}(x)+h_i(x) \leq \phi_{i}(x)\quad \text { for all }\, r_{i} \leq|x-x_{i}| \leq \rho_1,
\end{equation}
by choosing $A, B$ large.

{\bf Step 3.}\, In view of \eqref{eq:B3-1}, we may choose $A$ large such that $f_{i} \geq u_{i}$ on $\partial B_{r_{i}}(x_{i})$. By the choice of $M_{i}$, we know that $f_{i} \geq u_{i}$ on $\partial B_{\rho_1}(x_{i}) .$ We claim that
\begin{equation}\label{eq:uleqphi}
u_{i} \leq \phi_{i} \quad\text { for all }\, r_i\leq |x-x_i|\leq \rho_1.
\end{equation}
Indeed, if not, let
$$
1<t_{i}:=\inf \{t>1: t \phi_{i} \geq u_{i} \, \text { in }\, r_i\leq |x-x_i|\leq \rho_1\}<\infty.
$$
By the definition of $\phi_i$, we have $t_{i} \phi_{i} > u_{i}$ in $B_{r_{i}}(x_{i}) \cup B_{\rho_1}^{c}(x_{i})$. By the continuity there exists $y_{i} \in B_{\rho_1}(x_{i}) \backslash \overline{B}_{r_{i}}(x_{i})$ such that
$$
0=t_{i} \phi_{i}(y_{i})-u_{i}(y_{i}) \geq \mathfrak{L}_{i}(t_{i} \phi_{i}-u_{i})(y_{i})+(t_i-1)h_i(y_i)>0.
$$
This is a contradiction. Therefore, the claim is proved.

{\bf Step 4.}\, By \eqref{eq:Harnackhi}, we have $\max _{\bar{B}_{\rho_1}(x_i)} h_{i} \leq A_{2} \max _{\partial {B}_{\rho_1}(x_i)} h_{i} \leq A_{2} \max _{\partial {B}_{\rho_1}(x_i)} u_{i} .$ Hence, $M_{i} \leq$
$C \max _{\partial B_{\rho_1}(x_i)} u_{i} .$ For $r_{i}<\theta<\rho_1$, the same arguments as in \eqref{eq:sameargu} yield
$$
\begin{aligned}
\rho_1^{2 \sigma /(p_{i}-1)} M_{i} & \leq C \rho_1^{2 \sigma /(p_{i}-1)} \bar{u}_{i}(\rho_1) \\
& \leq C \theta^{2 \sigma /(p_{i}-1)} \bar{u}_{i}(\theta) \\
& \leq C \theta^{2 \sigma /(p_{i}-1)}\{M_{i} \rho_1^{\delta_{i}}\theta^{-\delta_{i}}+A m_{i}^{-\lambda_{i}}\theta^{2 \sigma-n+\delta_{i}}+Bm_i^{\frac{n-(n-2\sigma)p_i}{2\sigma}}\theta^{4\sigma-n}\}.
\end{aligned}
$$
Choose $\theta=\theta(n, \sigma, \rho, A_{0}, A_{1})$ sufficiently small so that
$$
C \theta^{2 \sigma /(p_{i}-1)} \rho_1^{\delta_{i}} \theta^{-\delta_{i}} \leq \frac{1}{2} \rho_1^{2 \sigma /(p_{i}-1)}.
$$
It follows that
$$
M_{i} \leq C m_{i}^{-\lambda_{i}}.
$$
Together with \eqref{eq:uleqphi}, Lemma \ref{lem:Blowup3} holds when $r_i\leq|x-x_{i}| \leq \rho_{1}$. By Lemma \ref{lem:Harnack} it also holds when $\rho_{1} \leq|x-x_{i}| \leq 1$. Therefore, we complete the proof.
\end{proof}

Below we are going to improve the estimate in Lemma \ref{lem:Blowup3}. First, we prove a Pohozaev type identity.

\begin{proposition}\label{pro:Pohozaev}
Let $0\leq u \in C(\overline{B}_{R})$ be a solution of
$$
u(x)=\int_{B_{R}} \frac{a(y) u(y)+u(y)^{p}}{|x-y|^{n-2 \sigma}} \, \ud y+h(x),
$$
where $1<p \leq \frac{n+2 \sigma}{n-2 \sigma},$ $a(x)\in C^1(B_R)$, and $h(x) \in C^{1}(B_{R}),$ $\nabla h \in L^{1}(B_{R}) .$ Then
\begin{equation}\label{eq:Pohozaev}
\mathcal{P}_{\sigma}(0, R, u, p)+\mathcal{Q}_{\sigma}(0,R, u, p)=0,
\end{equation}
where
$$
\mathcal{P}_{\sigma}(0, R, u, p):=-\frac{n-2 \sigma}{2} \int_{B_{R}} (a(x) u(x)+u(x)^{p}) h(x) \, \ud x-\int_{B_{R}} x \nabla h(x) (a(x) u(x)+u(x)^{p}) \, \ud x,
$$
$$
\begin{aligned}
\mathcal{Q}_{\sigma}(0, R, u, p):=&\Big(\frac{n-2 \sigma}{2}-\frac{n}{p+1}\Big) \int_{B_{R}}  u(x)^{p+1} \, \ud x-\sigma\int_{B_R} a(x)u(x)^2\,\ud x\\
&-\frac{1}{2} \int_{B_{R}} x \nabla a(x) u(x)^{2} \, \ud x+\frac{R}{2} \int_{\partial B_{R}} a(x) u(x)^{2} \, \ud s+\frac{R}{p+1} \int_{\partial B_{R}}  u(x)^{p+1} \, \ud s.
\end{aligned}
$$
\end{proposition}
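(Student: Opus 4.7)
The plan is to compute the single quantity
\[
\mathcal{I}:=\int_{B_R} F(x)\bigl(x\cdot\nabla v(x)\bigr)\,\ud x,\qquad F(x):=a(x)u(x)+u(x)^p,\quad v(x):=u(x)-h(x),
\]
in two independent ways and then equate. Since $v(x)=\int_{B_R}F(y)|x-y|^{2\sigma-n}\,\ud y$, the first evaluation of $\mathcal{I}$ is purely kernel-based: substituting and (after a suitable regularization near the diagonal) symmetrizing in $x$ and $y$, I would use that
\[
(x-y)\cdot\nabla_x|x-y|^{-(n-2\sigma)}=-(n-2\sigma)|x-y|^{-(n-2\sigma)}.
\]
Writing $x\cdot\nabla_x=\tfrac12(x-y)\cdot\nabla_x+\tfrac12[(x\cdot\nabla_x)+(y\cdot\nabla_y)]$, the second bracket is annihilated upon swapping the roles of $x,y$ (no boundary contribution is generated because both integrations are over the same ball $B_R$), so
\[
\mathcal{I}=-\frac{n-2\sigma}{2}\int_{B_R}F(x)v(x)\,\ud x=-\frac{n-2\sigma}{2}\int_{B_R}F(x)\bigl(u(x)-h(x)\bigr)\,\ud x.
\]

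The second evaluation of $\mathcal{I}$ uses $\nabla v=\nabla u-\nabla h$ together with classical integration by parts on $B_R$. Splitting $F=au+u^p$, I would compute
\[
\int_{B_R}au\,(x\cdot\nabla u)\,\ud x=\tfrac12\!\int_{B_R}a\,x\cdot\nabla(u^2)\,\ud x=\frac{R}{2}\!\int_{\partial B_R}\!a u^2\,\ud s-\frac{n}{2}\!\int_{B_R}\!a u^2\,\ud x-\tfrac12\!\int_{B_R}\!x\cdot\nabla a\,u^2\,\ud x,
\]
and analogously
\[
\int_{B_R}u^p\,(x\cdot\nabla u)\,\ud x=\frac{R}{p+1}\!\int_{\partial B_R}\!u^{p+1}\,\ud s-\frac{n}{p+1}\!\int_{B_R}\!u^{p+1}\,\ud x.
\]
Combining these with $-\int_{B_R}F(x)\,x\cdot\nabla h\,\ud x$ and setting the result equal to the kernel expression for $\mathcal{I}$, the $-\tfrac{n-2\sigma}{2}\int F u$ term on the kernel side cancels the $-\tfrac{n}{2}$ and $-\tfrac{n}{p+1}$ coefficients partially and produces exactly the coefficients $-\sigma$ in front of $\int au^2$ and $\tfrac{n-2\sigma}{2}-\tfrac{n}{p+1}$ in front of $\int u^{p+1}$ that appear in $\mathcal{Q}_\sigma$, while the $h$-terms on both sides assemble into $\mathcal{P}_\sigma$.

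The main technical obstacle is rigorously justifying the symmetrization of the singular integral in the first step: the kernel $|x-y|^{2\sigma-n}$ is only locally integrable, and $x\cdot\nabla_x|x-y|^{2\sigma-n}$ has a non-integrable singularity on the diagonal when tested against $F(x)F(y)$. I would handle this by inserting a smooth cutoff that removes a neighborhood $\{|x-y|<\varepsilon\}$ of the diagonal, performing the swap $x\leftrightarrow y$ on the truncated integral (which is legitimate by Fubini since the truncated kernel is bounded), and then passing to the limit $\varepsilon\to 0^+$; the symmetric combination yields an integrable kernel $|x-y|^{2\sigma-n}$, so dominated convergence closes the argument. A secondary but straightforward point is the integration by parts for $u\in C(\overline{B}_R)$ only: one can first prove \eqref{eq:Pohozaev} assuming $u\in C^1$ (guaranteed, e.g., by Proposition \ref{pro:JLXSchauder}) and note that the hypotheses in fact deliver this regularity from the integral equation, so no further approximation of $u$ is required; the boundary terms on $\partial B_R$ are well-defined pointwise since $u\in C(\overline{B}_R)$ and $a\in C^1$.
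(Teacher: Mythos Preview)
Your approach is essentially the same as the paper's: both compute $\int_{B_R}F(x)\,x\cdot\nabla u\,\ud x$ (with $F=au+u^p$) in two ways---once by the divergence theorem, once by differentiating the Riesz potential and symmetrizing the resulting double integral---and then equate. The paper carries out the symmetrization via the algebraic identity $x\cdot(x-y)=\tfrac12\bigl(|x-y|^2+|x|^2-|y|^2\bigr)$, where the antisymmetric part $|x|^2-|y|^2$ drops out against the symmetric measure $F(x)F(y)\,\ud x\,\ud y$.

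One small correction: your displayed operator identity $x\cdot\nabla_x=\tfrac12(x-y)\cdot\nabla_x+\tfrac12[(x\cdot\nabla_x)+(y\cdot\nabla_y)]$ is not correct even on translation-invariant kernels, and the bracket $[(x\cdot\nabla_x)+(y\cdot\nabla_y)]K$ is \emph{symmetric} under $x\leftrightarrow y$, not annihilated. The clean statement is that after averaging with the swapped integral one gets $\tfrac12\int\!\!\int F(x)F(y)\,(x\cdot\nabla_x+y\cdot\nabla_y)K$, and on $K(x,y)=|x-y|^{2\sigma-n}$ one has $(x\cdot\nabla_x+y\cdot\nabla_y)K=(x-y)\cdot\nabla_xK=(2\sigma-n)K$; this yields your formula $\mathcal{I}=-\tfrac{n-2\sigma}{2}\int_{B_R}F(u-h)$ directly. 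With that fix, your argument is complete and coincides with the paper's.
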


\begin{proof}
Note that
$$
\begin{aligned}
&\frac{1}{2} \int_{B_{R}} x a(x) \nabla u(x)^{2} \, \ud x+\frac{1}{p+1} \int_{B_{R}} x  \nabla u(x)^{p+1} \, \ud x\\
=&\int_{B_{R}} x (a(x) u(x)+u(x)^{p}) \nabla u(x) \, \ud x\\
=&(2 \sigma-n) \int_{B_{R}} x (a(x) u(x)+u(x)^{p}) \int_{B_{R}} \frac{(x-y) (a(y) u(y)+u(y)^{p})}{|x-y|^{n+2-2 \sigma}} \, \ud y \, \ud x \\
&+\int_{B_{R}} x (a(x) u(x)+u(x)^{p}) \nabla h(x) \, \ud x.
\end{aligned}
$$
By the divergence theorem, we have
$$
\int_{B_{R}} x a(x) \nabla u(x)^{2} \, \ud x=-\int_{B_{R}}(n a(x)+x \nabla a(x)) u(x)^{2} \, \ud  x+R \int_{\partial B_{R}} a(x) u(x)^{2} \, \ud s
$$
and
$$
\int_{B_{R}} x \nabla u(x)^{p+1} \, \ud x=-\int_{B_{R}}n u(x)^{p+1} \, \ud  x+R \int_{\partial B_{R}}  u(x)^{p+1} \, \ud s.
$$
By direct computations,
$$
\begin{aligned}
&\int_{B_{R}} x (a(x) u(x)+u(x)^{p}) \int_{B_{R}} \frac{(x-y) (a(y) u(y)+u(y)^{p})}{|x-y|^{n+2-2 \sigma}} \, \ud y \, \ud x \\
=&\frac{1}{2} \int_{B_{R}}  (a(x) u(x)+u(x)^{p}) \int_{B_{R}} \frac{(|x-y|^{2}+(|x|^{2}-|y|^{2})) (a(y) u(y)+u(y)^{p})}{|x-y|^{n+2-2 \sigma}} \, \ud y \, \ud x\\
=&\frac{1}{2} \int_{B_{R}}  (a(x) u(x)+u(x)^{p}) \int_{B_{R}} \frac{a(y) u(y)+u(y)^{p}}{|x-y|^{n-2 \sigma}} \, \ud y \, \ud x\\
=&\frac{1}{2} \int_{B_{R}}  (a(x) u(x)+u(x)^{p})(u(x)-h(x)) \, \ud x.
\end{aligned}
$$
This completes the proof.
\end{proof}

\begin{lemma}\label{lem:Blowup4}
Assume as in Lemma \ref{lem:Blowup3}, we have
$$
\tau_{i}=O(u_{i}(x_{i})^{-\min \{\frac{4\sigma}{n-2 \sigma}, 1\}+o(1)}),\, \text{ and thus }\, m_i^{\tau_{i}}=1+o(1).
$$
\end{lemma}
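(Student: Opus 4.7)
The plan is to extract the decay rate on $\tau_i$ from the Pohozaev identity \eqref{eq:Pohozaev}. After translating so $x_i = 0$, fix $\rho \in (0, \rho_1)$ with $\rho_1$ from Lemma \ref{lem:Blowup3}, and rewrite \eqref{eq:intui=} on $B_\rho$ as
\[
u_i(x) = \int_{B_\rho}\frac{a_i(y)u_i(y) + u_i(y)^{p_i}}{|x-y|^{n-2\sigma}}\,\ud y + h_i^\rho(x),
\]
where $h_i^\rho(x) = h_i(x) + \int_{B_3\setminus B_\rho}|x-y|^{2\sigma-n}(a_iu_i + u_i^{p_i})\,\ud y$ is smooth on $B_{\rho/2}$. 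Applying Proposition \ref{pro:Pohozaev} on $B_\rho$, the key observation is that the coefficient of $\int u^{p_i+1}$ Taylor-expands as
\[
\frac{n-2\sigma}{2} - \frac{n}{p_i+1} = -\frac{(n-2\sigma)^2}{4n}\tau_i + O(\tau_i^2),
\]
while Proposition \ref{pro:Blowup1} combined with the decay of $U^{p_0+1}\sim|y|^{-2n}$ and the choice $R_i^{\tau_i} = 1+o(1)$ gives $\int_{B_\rho}u_i^{p_i+1}\,\ud x$ bounded above and below by positive constants independent of $i$. Thus the principal term of $\mathcal{Q}_\sigma$ is $-c_0\tau_i(1+o(1))$ with some $c_0 = c_0(n,\sigma)>0$.

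I will then estimate the remaining terms of $\mathcal{Q}_\sigma$ by splitting $B_\rho = B_{r_i}\cup(B_\rho\setminus B_{r_i})$ and combining the bubble profile (Proposition \ref{pro:Blowup1}) with the pointwise decay (Lemma \ref{lem:Blowup3}). The crucial intermediate bound is
\[
\int_{B_\rho}u_i^2\,\ud x = O\bigl(m_i^{-4\sigma/(n-2\sigma)+o(1)}\bigr),
\]
obtained in the inner region from the change of variables $y=m_i^{(p_i-1)/2\sigma}x$ (scaling exponent $2-n(p_i-1)/2\sigma \to -4\sigma/(n-2\sigma)$, with the integral $\int_{B_{R_i}}U^2$ bounded when $n>4\sigma$ and logarithmic when $n=4\sigma$), and in the outer region from plugging in $u_i^2\le Cm_i^{-2\lambda_i}|x|^{2(2\sigma-n+\delta_i)}$; the arithmetic of $\lambda_i = (n-2\sigma-\delta_i)(p_i-1)/2\sigma - 1$ is set up precisely so the two regions give the same exponent. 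This immediately controls $\sigma\int a_iu_i^2$ and $\frac{1}{2}\int x\cdot\nabla a_i\,u_i^2$. The boundary integrals on $\partial B_\rho$ are $O(m_i^{-2+o(1)})$ since Lemma \ref{lem:Blowup3} with $\lambda_i\to 1$ gives $u_i=O(m_i^{-1+o(1)})$ there.

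For $\mathcal{P}_\sigma$, I will establish $\|h_i^\rho\|_{L^\infty(B_{\rho/2})}+\rho\|\nabla h_i^\rho\|_{L^\infty(B_{\rho/2})} = O(m_i^{-1+o(1)})$. The decay $u_i = O(m_i^{-1+o(1)})$ outside $B_\rho$ follows from Lemma \ref{lem:Blowup3} together with Lemma \ref{lem:Harnack} applied on shells; then $h_i \leq u_i$ pointwise together with the Harnack condition \eqref{eq:Harnackhi} (which controls $\max h_i$ by $A_2\min h_i \le A_2\min u_i$) gives the bound on $\|h_i\|_{L^\infty}$, and the external Riesz contribution to $h_i^\rho$ is bounded by $C\rho^{2\sigma-n}\int_{B_3\setminus B_\rho}(a_iu_i+u_i^{p_i})$. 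Since bubble scaling yields $\int_{B_\rho}(a_iu_i + u_i^{p_i})\,\ud x = O(m_i^{-1+o(1)})$, we obtain $|\mathcal{P}_\sigma| = O(m_i^{-2+o(1)})$. Substituting all estimates into $\mathcal{P}_\sigma + \mathcal{Q}_\sigma = 0$ and isolating $\tau_i$ produces the bound $\tau_i = O(m_i^{-\min\{4\sigma/(n-2\sigma),\,2\}+o(1)})$, which is stronger than the asserted rate; then $m_i^{\tau_i} = 1+o(1)$ is immediate from $\tau_i\log m_i \to 0$.

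The main obstacle is the careful bookkeeping of exponents: the $\delta_i = O(R_i^{-2\sigma+o(1)})$ corrections from Lemma \ref{lem:Blowup3}, the $O(\tau_i)$ corrections in the Taylor expansion of the scaling exponents, and the logarithmic factors (in the borderline case $n=4\sigma$ and in $\int_{B_{R_i}}U^2$) must all be tracked consistently and absorbed into a single $o(1)$ exponent. Moreover, the sharp control $\|h_i^\rho\|_{L^\infty} = O(m_i^{-1+o(1)})$, without which $\mathcal{P}_\sigma$ would not be dominated by the leading $\tau_i$-term of $\mathcal{Q}_\sigma$, is delicate and relies essentially on the isolated simple blow up assumption through Lemma \ref{lem:Blowup3}.
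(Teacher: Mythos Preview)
Your proposal is correct and follows essentially the same route as the paper: rewrite the equation on a small ball with a modified inhomogeneous term, apply the Pohozaev identity of Proposition~\ref{pro:Pohozaev}, show $\int u_i^{p_i+1}\ge c>0$ via Proposition~\ref{pro:Blowup1}, bound the remaining terms of $\mathcal{Q}_\sigma$ by splitting $B_\rho=B_{r_i}\cup(B_\rho\setminus B_{r_i})$ and invoking Proposition~\ref{pro:Blowup1} and Lemma~\ref{lem:Blowup3}, and control $\mathcal{P}_\sigma$ through $\|h_i^\rho\|_{C^1}=O(m_i^{-1+o(1)})$. The paper carries out exactly these steps (on $B_1(x_i)$ rather than $B_\rho$, a cosmetic difference), listing the same term-by-term estimates you outline; your sharper exponent $\min\{4\sigma/(n-2\sigma),2\}$ is in fact what the combined bounds yield, and since $n\ge 4\sigma$ it simply equals $4\sigma/(n-2\sigma)$, so the paper's stated $\min\{4\sigma/(n-2\sigma),1\}$ is a (harmless) under-claim.
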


\begin{proof}
For $x \in B_{1}(x_i)$, we write the Eq. \eqref{eq:intui=} of $u_{i}$ as
\begin{equation}\label{eq:B4-2}
u_{i}(x)=\int_{B_{1}(x_i)} \frac{a_{i}(y) u_{i}(y)+u_{i}(y)^{p_{i}}}{|x-y|^{n-2 \sigma}} \,\ud y+b_{i}(x),
\end{equation}
where
$$
b_{i}(x):=Q_i(x)+h_i(x)=\int_{B_{3} \backslash B_{1}(x_i)} \frac{a_{i}(y) u_{i}(y)+u_{i}(y)^{p_{i}}}{|x-y|^{n-2 \sigma}} \,\ud y+h_i(x).
$$
By Lemma \ref{lem:Blowup3}, we have $u_{i}(x) \leq C m_{i}^{-\lambda_{i}}$ for all $x \in B_{3 / 2}(x_i) \backslash B_{1 / 2}(x_i)$. Hence, $b_{i}(x)=Q_i(x)+h_i(x) \leq u_{i}(x) \leq C m_{i}^{-1+o(1)}$ for any $x \in \partial B_{1}(x_i)$. It follows from \eqref{eq:Harnackhi} that
\be\label{eq:hinablahi}
\max _{\overline{B}_{2}(x_i)} h_{i} \leq C \min _{\partial B_{1}(x_i)} h_{i} \leq C m_{i}^{-1+o(1)}, \quad \max _{B_{1}(x_i)}|\nabla h_{i}| \leq C \max _{\overline{B}_{2}(x_i)} h_{i} \leq C m_{i}^{-1+o(1)}.
\ee
Since $a_{i}$ and $u_{i}$ are nonnegative, by \eqref{eq:hinablahi} and the computation in \eqref{eq:B3-7}, we have for any $x \in B_{1}(x_{i})$,
\be\label{eq:binablabi}
|b_{i}(x)|+|\nabla b_{i}(x)| \leq C m_{i}^{-1+o(1)}.
\ee
Indeed, when $|x-x_{i}|<7 / 8$, it is easy to see that
$$
|\nabla Q_{i}(x)| \leq C \int_{|y-x_{i}| \geq 1, |y|\leq3} \frac{a_{i}(y) u_{i}(y)+u_{i}(y)^{p_{i}}}{|x-y|^{n-2 \sigma+1}}\,\ud y \leq C \max _{\partial B_{1}(x_{i})} u_{i} \leq C m_{i}^{-1+o(1)}.
$$
When $7 / 8 \leq|x-x_{i}| \leq 1,$ then
$$
\begin{aligned}
|\nabla Q_{i}(x)| & \leq C \int_{|y-x_{i}| \geq 1, |y|\leq3, |y-x| \geq 1 / 8}+\int_{|y-x_{i}| \geq 1, |y|\leq3, |y-x|<1 / 8} \frac{a_{i}(y) u_{i}(y)+u_{i}(y)^{p_{i}}}{|x-y|^{n-2 \sigma+1}}\,\ud y \\
& \leq C \max _{\partial B_{1}(x_{i})} u_{i}+C (m_{i}^{-1+o(1)}+m_{i}^{-p_{i}+o(1)}) \int_{1-|x-x_i|<|y-x|<1 / 8} \frac{\,\ud y}{|x-y|^{n-2 \sigma+1}} \\
& \leq C m_{i}^{-1+o(1)}.
\end{aligned}
$$
Applying Proposition \ref{pro:Pohozaev} to \eqref{eq:B4-2} yields
\begin{equation}\label{eq:B4-5}
\begin{aligned}
&\tau_{i} \int_{B_{1}(x_{i})} u_{i}(x)^{p_{i}+1} \, \ud x \\
\leq & C\Big(\int_{B_{1}(x_{i})} u_{i}(x)^{2}+|x-x_{i}| u_{i}(x)^{2}+(b_{i}(x)+|\nabla b_{i}(x)|) (u_i(x)+u_{i}(x)^{p_{i}})\,\ud x\\
&+\int_{\partial B_{1}(x_{i})} u_i(x)^2+u_{i}(x)^{p_{i}+1} \, \ud s\Big).
\end{aligned}
\end{equation}
By Proposition \ref{pro:Blowup1} and \eqref{eq:binablabi}, we have
\begin{equation}\label{eq:B4-6}
\begin{aligned}
\int_{B_{1}(x_{i})} u_{i}(x)^{p_{i}+1}  \,\ud  x & \geq C \int_{B_{r_{i}}(x_{i})} \frac{m_{i}^{p_{i}+1}}{(1+\bar{c}|m_{i}^{(p_{i}-1) / 2 \sigma}(x-x_{i})|^{2})^{(n-2 \sigma)(p_{i}+1) / 2}} \,\ud x \\
& \geq C m_{i}^{\tau_{i}(n / 2 \sigma-1)} \int_{B_{R_{i}}} \frac{1}{(1+\bar{c}|z|^{2})^{(n-2 \sigma)(p_{i}+1) / 2}} \,\ud  z \\
& \geq C m_{i}^{\tau_{i}(n / 2 \sigma-1)},
\end{aligned}
\end{equation}
$$
\int_{B_{r_{i}}(x_{i})} u_{i}(x)^{2} \,\ud  x \leq C m_{i}^{-4\sigma/(n-2\sigma)+o(1)},
$$
$$
\int_{B_{r_{i}}(x_{i})}|x-x_{i}| u_{i}(x)^{2} \,\ud  x \leq \left\{\begin{array}{ll}
O(m_{i}^{-(4\sigma+2)/(n-2 \sigma)+o(1)}), \quad& n>4 \sigma+1, \\
O(m_{i}^{-(4\sigma+2)/(n-2 \sigma)+o(1)}) \ln m_{i}, \quad& n=4 \sigma+1, \\
o(m_{i}^{-2+o(1)}), \quad& n<4 \sigma+1,
\end{array}\right.
$$
$$
\int_{B_{r_{i}}(x_{i})} (b_{i}(x)+|\nabla b_{i}(x)|)u_{i}(x) \,\ud  x \leq C m_{i}^{-2n/(n-2\sigma)+o(1)},
$$
and
$$
\int_{B_{r_{i}}(x_{i})} (b_{i}(x)+|\nabla b_{i}(x)|)u_{i}(x)^{p_i} \,\ud  x \leq C m_{i}^{-2+o(1)}.
$$
By Lemma \ref{lem:Blowup3} and \eqref{eq:binablabi}, we have
$$
\int_{r_{i}<|x-x_{i}|<1}(b_{i}(x)+|\nabla b_{i}(x)|) u_{i}(x) \,\ud  x \leq C m_{i}^{-2+o(1)},
$$
$$
\int_{r_{i}<|x-x_{i}|<1}(b_{i}(x)+|\nabla b_{i}(x)|) u_{i}(x)^{p_{i}} \,\ud  x \leq C m_{i}^{-2+o(1)},
$$
$$
\int_{r_{i}<|x-x_{i}|<1} u_{i}(x)^{2} \,\ud  x \leq \left\{\begin{array}{ll}
O(m_{i}^{-2 \lambda_{i}}), \quad& n<2(2 \sigma+\delta_i), \\
O(m_{i}^{-2 \lambda_{i}})\ln m_{i}, \quad& n=2(2 \sigma+\delta_i), \\
O(m_{i}^{-4\sigma/(n-2 \sigma)+o(1)}), \quad& n>2(2 \sigma+\delta_i),
\end{array}\right.
$$
$$
\int_{r_{i}<|x-x_{i}|<1}|x-x_{i}| u_{i}(x)^{2} \,\ud  x \leq \left\{\begin{array}{ll}
O(m_{i}^{-2 \lambda_{i}}), \quad& n<2(2 \sigma+\delta_i)+1, \\
O(m_{i}^{-2 \lambda_{i}})\ln m_{i}, \quad& n=2(2 \sigma+\delta_i)+1, \\
O(m_{i}^{-(4\sigma+2)/(n-2 \sigma)+o(1)}), \quad& n>2(2 \sigma+\delta_i)+1,
\end{array}\right.
$$
$$
\int_{\partial B_{1}(x_{i})} u_{i}(x)^{2} \,\ud  s \leq C m_{i}^{-2+o(1)},
$$
and
$$
\int_{\partial B_{1}(x_{i})} u_{i}(x)^{p_{i}+1} \,\ud  s \leq C m_{i}^{-2 n /(n-2 \sigma)+o(1)}.
$$
Combining the above estimates and $\tau_{i}=o(1)$, we complete the proof.
\end{proof}

\begin{proposition}\label{pro:Blowup6}
Under the assumptions of Lemma \ref{lem:Blowup3}, we have
$$
u_{i}(x) \leq C u_{i}(x_{i})^{-1}|x-x_{i}|^{2 \sigma-n}\quad \text { for all }\, |x-x_{i}| \leq 1.
$$
\end{proposition}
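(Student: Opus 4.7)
The plan is to sharpen Lemma \ref{lem:Blowup3} and match it with the inner blow-up profile of Proposition \ref{pro:Blowup1}, invoking the refined estimate $m_i^{\tau_i}=1+o(1)$ of Lemma \ref{lem:Blowup4} to absorb the $m_i^{o(1)}$ losses present in those intermediate bounds. I split the ball $\{|x-x_i|\le 1\}$ into the outer annular region $\{r_i\le |x-x_i|\le 1\}$ and the inner region $\{|x-x_i|\le r_i\}$, and treat each separately.

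In the outer region, Lemma \ref{lem:Blowup3} reads $u_i(x)\le Cm_i^{-\lambda_i}|x-x_i|^{2\sigma-n+\delta_i}$. Since $|x-x_i|\le 1$ and $\delta_i>0$, the factor $|x-x_i|^{\delta_i}$ can be dropped. A direct expansion of $\lambda_i=(n-2\sigma-\delta_i)(p_i-1)/(2\sigma)-1$ using $p_i-1=4\sigma/(n-2\sigma)-\tau_i$ gives
\[
1-\lambda_i=\frac{(n-2\sigma)\tau_i}{2\sigma}+\frac{\delta_i(p_i-1)}{2\sigma}.
\]
By Lemma \ref{lem:Blowup4}, $\tau_i\log m_i\to 0$. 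Moreover, one retains the freedom to choose $R_i\to\infty$ subject only to $R_i^{\tau_i}=1+o(1)$; taking $R_i$ to be a small positive power of $m_i$ (admissible precisely because Lemma \ref{lem:Blowup4} gives $\tau_i=O(m_i^{-c})$), the resulting $\delta_i=O(R_i^{-2\sigma+o(1)})$ also satisfies $\delta_i\log m_i\to 0$. Hence $m_i^{1-\lambda_i}=1+o(1)$, and the outer estimate upgrades to $u_i(x)\le Cm_i^{-1}|x-x_i|^{2\sigma-n}$.

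In the inner region Proposition \ref{pro:Blowup1} gives $u_i(x)\le Cm_i(1+\bar c m_i^{(p_i-1)/\sigma}|x-x_i|^2)^{(2\sigma-n)/2}$. For $|x-x_i|\ge s_i:=m_i^{-(p_i-1)/(2\sigma)}$ this collapses to $u_i(x)\le Cm_i^{1-(n-2\sigma)(p_i-1)/(2\sigma)}|x-x_i|^{2\sigma-n}=Cm_i^{-1+(n-2\sigma)\tau_i/(2\sigma)}|x-x_i|^{2\sigma-n}$, which is $\le Cm_i^{-1}|x-x_i|^{2\sigma-n}$ again by Lemma \ref{lem:Blowup4}. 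For $|x-x_i|\le s_i$ the desired inequality reduces to $m_i\le Cm_i^{-1}s_i^{2\sigma-n}=Cm_i^{1-(n-2\sigma)\tau_i/(2\sigma)}$, which follows from Lemma \ref{lem:Blowup4} as well. Combining both regions gives the conclusion.

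The argument is thus essentially careful bookkeeping of the $m_i^{o(1)}$ prefactors; the only point requiring care is the verification that a power-type choice of $R_i$ is consistent with the standing constraint $R_i^{\tau_i}=1+o(1)$ noted after Proposition \ref{pro:Blowup1}. This compatibility is furnished by the polynomial bound $\tau_i=O(m_i^{-c})$ of Lemma \ref{lem:Blowup4}, which is therefore the conceptual heart of the proposition.
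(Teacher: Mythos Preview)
Your argument is correct and notably more direct than the paper's. The paper does not simply sharpen Lemma~\ref{lem:Blowup3}; instead, for the outer region it first proves the boundary estimate $u_i(\rho e+x_i)\,u_i(x_i)\le C$ via a limiting argument---normalizing $\varphi_i:=u_i(\rho e+x_i)^{-1}u_i$, passing to a limit $\varphi$ in $C^2_{loc}(B_1\setminus\{0\})$, using the isolated-simple hypothesis to force $\varphi$ to be singular at $0$, and then reading off boundedness of $u_i(\rho e+x_i)u_i(x_i)$ from the limit of the integral equation---and finishes the intermediate range $r_i\le|x-x_i|\le\rho$ by a separate scaling argument imported from \cite{JLXThe2017}. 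Your route cuts this short: once Lemma~\ref{lem:Blowup4} provides the polynomial decay $\tau_i=O(m_i^{-c})$, the re-selection $R_i=m_i^{\epsilon}$ becomes admissible (indeed the paper itself invokes exactly this freedom, writing ``choosing $R_i$ such that $m_i^{\delta_i}\le C$'' inside its own longer proof), and then the exponent identity $1-\lambda_i=(n-2\sigma)\tau_i/(2\sigma)+\delta_i(p_i-1)/(2\sigma)$ lets Lemma~\ref{lem:Blowup3} deliver $m_i^{-\lambda_i}\le Cm_i^{-1}$ directly. The paper's limiting-singularity technique is more robust as a template---essentially the same mechanism reappears in Proposition~\ref{pro:Estimates2}---but for this particular statement your bookkeeping approach is both simpler and self-contained.
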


\begin{proof}
For $|x-x_{i}| \leq r_{i},$ it follows from Proposition \ref{pro:Blowup1} that
$$
\begin{aligned}
u_{i}(x) \leq& C m_{i}\Big(\frac{1}{1+|m_{i}^{(p_{i}-1) / 2 \sigma}(x-x_{i})|^{2}}\Big)^{\frac{n-2\sigma}{2}} \\
\leq& C m_{i}^{-1+\frac{n-2 \sigma}{2 \sigma} \tau_{i}}|x-x_{i}|^{2 \sigma-n} \\
\leq& C m_{i}^{-1}|x-x_{i}|^{2 \sigma-n},
\end{aligned}
$$
where Lemma \ref{lem:Blowup4} is used in the last inequality.

We shall show first that
\begin{equation}\label{eq:B6-1}
u_{i}(\rho e+x_{i}) u_{i}(x_{i}) \leq C
\end{equation}
for any vector $|e|=1$. Since $u_{i}(x) \leq A_{1}|x-x_{i}|^{-2 \sigma /(p_{i}-1)}$ in $B_{2}(x_{i})$, it follows from Lemma \ref{lem:Harnack} that for any $0<\varepsilon<1$ there exists a positive constant $C(\varepsilon),$ depending on $n, \sigma, A_{0}, A_{1}$, and $\varepsilon,$ such that
\begin{equation}\label{eq:B6-2}
\sup _{B_{1}(x_{i}) \backslash B_{\varepsilon}(x_{i})} u_{i} \leq C(\varepsilon)  \inf _{B_{1}(x_{i}) \backslash B_{\varepsilon}(x_{i})} u_{i}.
\end{equation}
Define $\varphi_{i}(x):=u_{i}(\rho e+x_{i})^{-1} u_{i}(x) .$ Then for $|x-x_{i}| \leq 1$,
$$
\varphi_{i}(x)=\int_{B_{1}(x_i)} \frac{a_{i}(y) \varphi_{i}(y)+u_{i}(\rho e+x_{i})^{p_i-1}\varphi_{i}(y)^{p_{i}}}{|x-y|^{n-2 \sigma}} \, \ud y+\tilde{h}_{i}(x),
$$
where
$$
\tilde{h}_{i}(x)=\int_{B_3\backslash B_{1}(x_i)} \frac{a_{i}(y) \varphi_{i}(y)+u_{i}(\rho e+x_{i})^{p_i-1}\varphi_{i}(y)^{p_{i}}}{|x-y|^{n-2 \sigma}} \, \ud y+u_{i}(\rho e+x_{i})^{-1} h_{i}(x).
$$
Since $\varphi_{i}(\rho e+x_{i})=1,$ by \eqref{eq:B6-2},
\begin{equation}\label{eq:B6-3}
\|\varphi_{i}\|_{L^{\infty}(B_{1}(x_{i}) \backslash B_{\varepsilon}(x_{i}))} \leq C(\varepsilon) \quad\text { for }\, 0<\varepsilon<1.
\end{equation}
By \eqref{eq:Harnackhi},
\be\label{eq:B6-3.5}
\sum_{k=1}^{3}\|\nabla^{k} \tilde{h}_{i}\|_{L^{\infty}(B_{1}(x_i))} \leq C\|\tilde{h}_{i}\|_{L^{\infty}(B_{5 / 4}(x_i))} \leq C \tilde{h}_{i}(\rho e+x_i) \leq C.
\ee
Besides, by Lemma \ref{lem:Blowup3},
\begin{equation}\label{eq:B6-4}
u_{i}(\rho e+x_{i})^{p_{i}-1} \rightarrow 0 \quad\text { as }\, i \rightarrow \infty.
\end{equation}
Applying Proposition \ref{pro:JLXHarnack} and Proposition \ref{pro:JLXSchauder} to $\varphi_{i}$ and making use
of \eqref{eq:B6-3} and \eqref{eq:B6-3.5}, we have, after passing to a subsequence, that
$$
\tilde{h}_{i} \rightarrow h \quad \text { in }\, C^{2}(B_{1}), \quad \varphi_{i} \rightarrow \varphi \quad \text { in }\, C_{l o c}^{2}(B_{1} \backslash\{0\})
$$
for some $h \in C^{2}(B_{1})$ and $\varphi \in C_{loc}^{2}(B_{1} \backslash\{0\})$.

Therefore,
\be\label{eq:var-h}
\begin{aligned}
&\int_{B_{1}(x_i)} \frac{a_{i}(y) \varphi_{i}(y)+u_{i}(\rho e+x_{i})^{p_i-1}\varphi_{i}(y)^{p_{i}}}{|x-y|^{n-2 \sigma}} \, \ud y\\
=&\varphi_{i}(x)-\tilde{h}_{i}(x) \rightarrow \varphi(x)-h(x) \quad \text { in }\,  C_{l o c}^{2}(B_{1} \backslash\{0\}).
\end{aligned}
\ee
We shall evaluate what $G(x):=\varphi(x)-h(x)$ is. For any $|x|>0$ and $2|x_{i}|<r_i \leq \varepsilon<\frac{1}{2}|x-x_{i}|,$ in view of \eqref{eq:B6-3} and \eqref{eq:B6-4} we have
$$
\begin{aligned}
&\int_{B_{1}(x_{i})} \frac{a_{i}(y) \varphi_{i}(y)+u_{i}(\rho e+x_{i})^{p_i-1}\varphi_{i}(y)^{p_{i}}}{|x-y|^{n-2 \sigma}} \,\ud y \\
=& \int_{B_{\varepsilon}(x_{i})} \frac{u_{i}(\rho e+x_{i})^{p_i-1}\varphi_{i}(y)^{p_{i}}}{|x-y|^{n-2 \sigma}} \,\ud y + \int_{B_{1}(x_{i})} \frac{a_{i}(y) \varphi_{i}(y)}{|x-y|^{n-2 \sigma}} \,\ud y+o(1)\\
=&\int_{B_{\varepsilon}(x_{i})} \frac{u_{i}(\rho e+x_{i})^{-1}u_{i}(y)^{p_{i}}}{|x-y|^{n-2 \sigma}} \,\ud y + \int_{B_{1}(x_{i})} \frac{a_{i}(y)u_{i}(\rho e+x_{i})^{-1} u_{i}(y)}{|x-y|^{n-2 \sigma}} \,\ud y+o(1)\\
=&u_{i}(\rho e+x_{i})^{-1}\Big(|x|^{2\sigma-n}\int_{B_{\varepsilon}(x_{i})} {u_{i}(y)^{p_{i}}} \,\ud y+\int_{B_{\varepsilon}(x_{i})} O(|y|){u_{i}(y)^{p_{i}}} \,\ud y\\
&+ |x|^{2\sigma-n}\int_{B_{1}(x_{i})} {a_{i}(y) u_{i}(y)} \,\ud y+\int_{B_{1}(x_{i})}O(|y|) {a_{i}(y) u_{i}(y)} \,\ud y\Big)+o(1).
\end{aligned}
$$
By Proposition \ref{pro:Blowup1}, Lemma \ref{lem:Blowup4}, and note that $|y|\leq|y-x_i|+|x_i|$, we have
$$
m_i\int_{B_{r_i}(x_{i})} u_{i}(y)^{p_{i}} \,\ud y\to \int_{\mathbb{R}^n} (1+\bar{c}|z|^2)^{-(n+2\sigma)/2}\,\ud z,
$$
$$
m_i\int_{B_{r_i}(x_{i})} O(|y|){u_{i}(y)^{p_{i}}} \,\ud y\to 0,
$$
$$
m_i\int_{B_{r_i}(x_{i})} {a_{i}(y) u_{i}(y)} \,\ud y\to 0,
$$
and
$$
m_i\int_{B_{r_i}(x_{i})}O(|y|) {a_{i}(y) u_{i}(y)} \,\ud y\to 0
$$
as $i\to\infty$. By Lemma \ref{lem:Blowup3} and Lemma \ref{lem:Blowup4}, we have
$$
m_i\int_{B_{\varepsilon}(x_i)\backslash B_{r_i}(x_{i})} u_{i}(y)^{p_{i}} \,\ud y\to 0,
$$
and
$$
m_i\int_{B_{\varepsilon}(x_i)\backslash B_{r_i}(x_{i})} O(|y|){u_{i}(y)^{p_{i}}} \,\ud y\to 0
$$
as $i\to\infty$. Moreover, choosing $R_i$ such that $m_i^{\delta_i}\leq C$, we have, after passing to a subsequence,
$$
m_i\int_{B_{1}(x_i)\backslash B_{r_i}(x_{i})} {a_{i}(y) u_{i}(y)} \,\ud y\to C_1,
$$
and
$$
m_i\int_{B_{1}(x_i)\backslash B_{r_i}(x_{i})}O(|y|) {a_{i}(y) u_{i}(y)} \,\ud y\to C_2
$$
as $i\to\infty$, where $C_1$, $C_2$ are some nonegative constants.

Therefore, we have
$$
\int_{B_{1}(x_{i})} \frac{a_{i}(y) \varphi_{i}(y)+u_{i}(\rho e+x_{i})^{p_i-1}\varphi_{i}(y)^{p_{i}}}{|x-y|^{n-2 \sigma}} \,\ud y
=\frac{\bar{C}|x|^{2 \sigma-n}+C_{2}}{u_{i}(\rho e+x_{i}) u_{i}(x_{i})}+o(1).
$$
Since $x_{i} \rightarrow 0$ is an isolated simple blow point, we have $r^{\frac{n-2 \sigma}{2}} \bar{\varphi}(r) \geq \rho^{\frac{n-2 \sigma}{2}} \bar{\varphi}(\rho)$ for $0<r<\rho$. Consequently, $\varphi$ is singular at 0. By \eqref{eq:B6-3.5}, $\varphi(\bar{x})-h(\bar{x})\geq C>0$ at some point $\bar{x}\in B_1$. It follows from \eqref{eq:var-h} that
$$
u_{i}(\rho e+x_{i}) u_{i}(x_{i})\to \frac{1}{\varphi(\bar{x})-h(\bar{x})}(\bar{C}|\bar{x}|^{2 \sigma-n}+C_{2})\quad \text{ as }\, i\to\infty.
$$
Therefore, we have
\be\label{eq:uiuileqC}
u_{i}(\rho e+x_{i}) u_{i}(x_{i})\leq C, \quad \forall\, |e|=1.
\ee

Without loss of generality, we may assume that $\rho \leq 1 / 2$. It follows from Lemma \ref{lem:Harnack} and \eqref{eq:uiuileqC} that Proposition \ref{pro:Blowup6} holds for $\rho \leq|x-x_{i}| \leq 1$. By a standard scaling argument, we can reduce the case of $r_{i} \leq|x-x_{i}| \leq \rho$ to $|x-x_{i}|=1$. We refer to \cite[p. 132]{JLXThe2017} for details.
\end{proof}

\begin{corollary}\label{cor:Blowup7}
Under the hypotheses of Lemma \ref{lem:Blowup3}, we have
$$
\int_{|x-x_{i}| \leq 1}|x-x_{i}|^{s} u_{i}(x)^{2} \,\ud x=\left\{\begin{array}{ll}
O(m_{i}^{-2}), \quad& s+4 \sigma>n, \\
O(m_{i}^{-2}) \ln m_{i}, \quad& s+4 \sigma=n, \\
O(m_{i}^{-(4 \sigma+2 s)/(n-2 \sigma)}), \quad& s+4 \sigma<n.
\end{array}\right.
$$
\end{corollary}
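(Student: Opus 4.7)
The plan is to split the ball $B_1(x_i)$ into the bubble region $B_{r_i}(x_i)$ and the annulus $B_1(x_i)\setminus B_{r_i}(x_i)$, then bound $u_i$ on each piece by the sharpest estimate we have available: Proposition \ref{pro:Blowup1} (the rescaled bubble profile) on the inner region and Proposition \ref{pro:Blowup6} (the sharp Green-type upper bound $u_i(x)\leq Cm_i^{-1}|x-x_i|^{2\sigma-n}$) on the outer annulus. Lemma \ref{lem:Blowup4}, which guarantees $m_i^{\tau_i}=1+o(1)$, will then let me absorb all $m_i^{o(1)}$ factors arising from the substitutions $p_i-1 = 4\sigma/(n-2\sigma)-\tau_i$.

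For the inner piece, by Proposition \ref{pro:Blowup1} I can write
$u_i(x)^2 \leq C m_i^{2}\bigl(1+\bar c\, m_i^{(p_i-1)/\sigma}|x-x_i|^2\bigr)^{2\sigma-n}$,
and after the substitution $z=m_i^{(p_i-1)/2\sigma}(x-x_i)$ the integral becomes
$$
\int_{|x-x_i|\leq r_i}|x-x_i|^s u_i^2\,\ud x \leq C\, m_i^{-(4\sigma+2s)/(n-2\sigma)+o(1)}\int_0^{R_i} r^{s+n-1}(1+r^2)^{2\sigma-n}\,\ud r,
$$
where the factor $m_i^{o(1)}$ comes from the $(s+n)\tau_i/(2\sigma)$ contribution in the exponent and is bounded by Lemma \ref{lem:Blowup4}. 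The radial integral converges to a finite constant when $s+4\sigma<n$, behaves like $\ln R_i$ when $s+4\sigma=n$, and like $R_i^{s+4\sigma-n}$ when $s+4\sigma>n$.

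For the outer piece I apply Proposition \ref{pro:Blowup6} to get
$$
\int_{r_i\leq |x-x_i|\leq 1}|x-x_i|^s u_i^2\,\ud x \leq C\, m_i^{-2}\int_{r_i}^{1} r^{s+4\sigma-n-1}\,\ud r,
$$
which is $O(1)$ if $s+4\sigma>n$, is $\ln(1/r_i)=O(\ln m_i)$ if $s+4\sigma=n$, and equals $O(r_i^{s+4\sigma-n})=O(m_i^{2(n-s-4\sigma)/(n-2\sigma)+o(1)})$ if $s+4\sigma<n$. In the last case the outer exponent combines with the $m_i^{-2}$ prefactor to give precisely $m_i^{-(4\sigma+2s)/(n-2\sigma)+o(1)}$, matching the inner estimate.

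Adding the inner and outer contributions in each of the three regimes yields the three cases of the corollary, with all $m_i^{o(1)}$ factors absorbed via $m_i^{\tau_i}=1+o(1)$ and $R_i$ chosen so that $R_i^{\tau_i}=1+o(1)$. The computation is routine; the only mildly delicate point will be the borderline case $s+4\sigma=n$, where both the inner and the outer contributions produce a logarithmic factor of the same order $O(m_i^{-2}\ln m_i)$ and one must check that no larger term slips in from the $R_i$ dependence.
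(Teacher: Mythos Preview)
Your proposal is correct and follows exactly the approach the paper has in mind: the paper's proof is the single line ``It follows from Proposition \ref{pro:Blowup1}, Lemma \ref{lem:Blowup4} and Proposition \ref{pro:Blowup6},'' and you have correctly unpacked this by splitting at $r_i$, using the bubble profile inside, the sharp bound $u_i\le Cm_i^{-1}|x-x_i|^{2\sigma-n}$ outside, and $m_i^{\tau_i}=1+o(1)$ to remove the $o(1)$ in the exponents. The borderline case is handled as you indicate, since $\ln R_i\le C\ln m_i$ and $\ln(1/r_i)\le C\ln m_i$.
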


\begin{proof}
It follows from Proposition \ref{pro:Blowup1}, Lemma \ref{lem:Blowup4} and Proposition \ref{pro:Blowup6}.
\end{proof}

\begin{lemma}\label{lem:Blowup9}
Under the hypotheses of Lemma \ref{lem:Blowup3}, we have
$$
a_{i}(x_{i}) \leq C\left\{\begin{array}{ll}
(\ln m_{i})^{-1}(1+\|\nabla^{2} a_{i}\|_{L^{\infty}(B_{1})}) \quad& \text { if }\, n=4 \sigma, \\
m_{i}^{-2+\frac{4 \sigma}{n-2 \sigma}}(1+\|\nabla^{2} a_{i}\|_{L^{\infty}(B_{1})}) \quad& \text { if }\, 4 \sigma<n<4 \sigma+2, \\
m_{i}^{-2+\frac{4 \sigma}{n-2 \sigma}}(1+\|\nabla^{2} a_{i}\|_{L^{\infty}(B_{1})} \ln m_{i}) \quad& \text { if }\, n=4 \sigma+2, \\
m_{i}^{-2+\frac{4 \sigma}{n-2 \sigma}}(1+\|\nabla^{2} a_{i}\|_{L^{\infty}(B_{1})} m_{i}^{\frac{2n-8\sigma-4}{n-2 \sigma}}) \quad& \text { if }\, n>4 \sigma+2,
\end{array}\right.
$$
and
$$
|\nabla a_{i}(x_{i})| \leq C\left\{\begin{array}{ll}
(\ln m_{i})^{-1}(1+\|\nabla^{2} a_{i}\|_{L^{\infty}(B_{1})}) \quad& \text { if }\, n=4 \sigma, \\
m_{i}^{-2+\frac{4 \sigma}{n-2 \sigma}}(1+\|\nabla^{2} a_{i}\|_{L^{\infty}(B_{1})}) \quad& \text { if }\, 4 \sigma<n<4 \sigma+1, \\
m_{i}^{-2+\frac{4 \sigma}{n-2 \sigma}}(1+\|\nabla^{2} a_{i}\|_{L^{\infty}(B_{1})} \ln m_{i}) \quad& \text { if }\, n=4 \sigma+1, \\
m_{i}^{-2+\frac{4 \sigma}{n-2 \sigma}}(1+\|\nabla^{2} a_{i}\|_{L^{\infty}(B_{1})} m_{i}^{\frac{2n-8\sigma-2}{n-2 \sigma}}) \quad& \text { if }\, n>4 \sigma+1,
\end{array}\right.
$$
where $C>0$ depends only on $n, \sigma, A_{0}, A_{1}$ and $\rho .$
\end{lemma}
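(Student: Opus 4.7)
The plan is to extract both estimates from two Pohozaev-type identities applied on $B_{1}(x_i)$ to the localized integral equation
\begin{equation*}
u_i(x) = \int_{B_1(x_i)} \frac{a_i(y)u_i(y)+u_i(y)^{p_i}}{|x-y|^{n-2\sigma}}\,\ud y + b_i(x),
\end{equation*}
where $b_i$ is the ``outer'' contribution introduced in the proof of Lemma \ref{lem:Blowup4} and satisfies $|b_i|+|\nabla b_i|\le Cm_i^{-1+o(1)}$ throughout $B_1(x_i)$. Proposition \ref{pro:Blowup6} gives $u_i\le Cm_i^{-1}$ on $\partial B_1(x_i)$, so every surface integral arising from either identity will automatically be $O(m_i^{-2+o(1)})$; the same input together with Lemma \ref{lem:Blowup4} also yields $\int_{B_1(x_i)}(a_iu_i+u_i^{p_i})\le Cm_i^{-1+o(1)}$.

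To bound $|\nabla a_i(x_i)|$, I would establish a Kazdan--Warner type identity by multiplying the equation by $\partial_k f_i$, with $f_i:=a_iu_i+u_i^{p_i}$, integrating over $B_1(x_i)$, and using the symmetry of the Riesz kernel $K(z)=|z|^{2\sigma-n}$ together with $\int K(x-y)\partial_k f_i(x)\,\ud x=\partial_{y_k}(K*f_i)(y)=\partial_k u_i(y)-\partial_k b_i(y)$. After suitable integrations by parts one arrives at
\begin{equation*}
\int_{B_1(x_i)}\partial_k a_i\,u_i^{2}\,\ud x \;=\; -2\int_{B_1(x_i)} f_i\,\partial_k b_i\,\ud x \;+\; (\text{boundary on }\partial B_1(x_i)).
\end{equation*}
Both right-hand side contributions are $O(m_i^{-2+o(1)})$. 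Taylor expanding $\partial_k a_i(x)=\partial_k a_i(x_i)+O(|x-x_i|\,\|\nabla^2 a_i\|_{L^\infty(B_1)})$, dividing by $\int u_i^{2}$ (whose size is supplied by Corollary \ref{cor:Blowup7} with $s=0$), and controlling the Taylor error by $\|\nabla^2 a_i\|_{L^\infty}\int|x-x_i|u_i^{2}/\int u_i^{2}$ (Corollary \ref{cor:Blowup7} with $s=1$) produces exactly the four cases for $|\nabla a_i(x_i)|$, the thresholds being $n=4\sigma$, $n<4\sigma+1$, $n=4\sigma+1$, and $n>4\sigma+1$.

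To bound $a_i(x_i)$, I would apply the Pohozaev identity from Proposition \ref{pro:Pohozaev} translated to the centre $x_i$. A key observation is that $\frac{n-2\sigma}{2}-\frac{n}{p_i+1}=-\frac{(n-2\sigma)^{2}}{4n}\tau_i+O(\tau_i^{2})<0$, so after rearrangement the $\tau_i$-term sits on the \emph{same side} as $\sigma\int a_iu_i^{2}$, and since both contributions are non-negative the $\tau_i$-term can simply be discarded from the upper bound. Combining this with Taylor expansion of $a_i$ and $\nabla a_i$ at $x_i$, the $O(m_i^{-2+o(1)})$ estimates for the boundary and $\mathcal{P}_\sigma$-terms, and Corollary \ref{cor:Blowup7} with $s=1,2$, one obtains
\begin{equation*}
a_i(x_i)\int u_i^{2} \;\le\; C\,|\nabla a_i(x_i)|\!\int|x-x_i|\,u_i^{2} \;+\; C\,\|\nabla^{2}a_i\|_{L^\infty(B_1)}\!\int|x-x_i|^{2}u_i^{2} \;+\; Cm_i^{-2+o(1)}.
\end{equation*}
Dividing by $\int u_i^{2}$ and feeding in the $|\nabla a_i(x_i)|$ bound from the first step delivers the four claimed cases, with the $\ln m_i$ losses appearing at precisely $n=4\sigma,4\sigma+1,4\sigma+2$ (the thresholds where Corollary \ref{cor:Blowup7} is logarithmically borderline).

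The main obstacle is twofold. First, deriving the Kazdan--Warner identity cleanly in the nonlocal setting requires a careful Fubini/symmetry argument, since ordinary integration by parts in the PDE formulation is unavailable. Second, the dimension-by-dimension bookkeeping is intricate: one must verify that the exponents produced by Corollary \ref{cor:Blowup7} combine with the $|\nabla a_i(x_i)|$ bound to give exactly the stated rates and that the logarithmic factors arise in precisely the borderline dimensions. Beyond these two points, no analytical tool is needed that has not already been developed.
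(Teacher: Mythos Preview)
Your approach is essentially the paper's: derive a translation Kazdan--Warner identity via the antisymmetry of $\partial_k|x-y|^{2\sigma-n}$ to bound $|\nabla a_i(x_i)|$, then use the Pohozaev identity of Proposition~\ref{pro:Pohozaev} together with Corollary~\ref{cor:Blowup7} to bound $a_i(x_i)$. Two remarks.

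First, a small correction: you cite $|b_i|+|\nabla b_i|\le Cm_i^{-1+o(1)}$ from Lemma~\ref{lem:Blowup4}, but at this stage Proposition~\ref{pro:Blowup6} is already available and upgrades this to $|\nabla b_i|\le Cm_i^{-1}$ with no $o(1)$ (this is exactly equation~\eqref{eq:B9-1} in the paper). Without this sharpening your final bounds would carry a spurious $m_i^{o(1)}$ factor that does not appear in the statement.

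Second, your handling of the $\tau_i$ term is cleaner than the paper's. You correctly observe that $\big(\tfrac{n-2\sigma}{2}-\tfrac{n}{p_i+1}\big)\int u_i^{p_i+1}\le 0$, so it may be discarded when bounding $\sigma\int a_iu_i^2$ from above. The paper instead first proves a refined estimate~\eqref{eq:tauiestimate} on $\tau_i$ itself and then feeds that back into the Pohozaev identity; your sign argument makes this detour unnecessary. The paper's route has the side benefit of recording the sharper $\tau_i$-bound explicitly, but for the purpose of Lemma~\ref{lem:Blowup9} alone your shortcut suffices.
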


\begin{proof}
It follows from Proposition \ref{pro:Blowup6} and the proof of \eqref{eq:binablabi} that for $y \in B_{1}(x_{i})$,
\begin{equation}\label{eq:B9-1}
|\nabla b_{i}(y)| \leq C m_{i}^{-1}.
\end{equation}
By \eqref{eq:B4-2} and integrating by parts, we have
$$
\begin{aligned}
&\frac{1}{2} \int_{\partial B_{1}(x_i)} y_{j} a_{i}(y)  u_{i}(y)^{2} \,\ud s-\frac{1}{2} \int_{B_{1}(x_i)} \partial_{j}a_{i}(y) u_{i}(y)^{2} \,\ud y+\frac{1}{p_{i}+1} \int_{\partial B_{1}(x_i)} y_{j}  u_{i}(y)^{p_{i}+1} \,\ud  s \\
=&\frac{1}{2}\int_{B_{1}(x_i)} a_{i}(y) \partial_{j} u_{i}(y)^{2} \,\ud  y+\frac{1}{p_{i}+1} \int_{B_{1}(x_i)}  \partial_{j} u_{i}(y)^{p_{i}+1} \,\ud  y \\
=&(2 \sigma-n) \int_{B_{1}(x_i)} a_{i}(y)  u_{i}(y) \int_{B_{1}(x_i)} \frac{(y_{j}-z_{j})}{|y-z|^{n-2 \sigma+2}} (a_{i}(z) u_{i}(z)+u_{i}(z)^{p_{i}}) \,\ud z  \,\ud y \\
&+\int_{B_{1}(x_i)} a_{i}(y)  u_{i}(y) \partial_{j} b_{i}(y) \,\ud  y \\
&+(2 \sigma-n) \int_{B_{1}(x_i)}  u_{i}(y)^{p_{i}} \int_{B_{1}(x_i)} \frac{(y_{j}-z_{j})}{|y-z|^{n-2 \sigma+2}} (a_{i}(z) u_{i}(z)+u_{i}(z)^{p_{i}})\,\ud z \,\ud  y \\
&+\int_{B_{1}(x_i)} u_{i}(y)^{p_{i}} \partial_{j} b_{i}(y) \,\ud  y \\
=&\int_{B_{1}(x_i)} (a_{i}(y) u_{i}(y)+u_{i}(y)^{p_{i}}) \partial_{j} b_{i}(y) \,\ud y.
\end{aligned}
$$
Hence,
$$
\begin{aligned}
&\Big| \int_{B_{1}(x_i)} \nabla a_{i}(y) u_{i}(y)^{2} \,\ud y\Big| \\
\leq& C\Big(\int_{B_{1}(x_i)}  (a_{i}(y) u_{i}(y)+u_{i}(y)^{p_{i}})|\nabla b_{i}(y)|\,\ud y+\int_{\partial B_{1}(x_i)} u_{i}(y)^{2}+u_{i}(y)^{p_{i}+1}\,\ud s\Big).
\end{aligned}
$$
Using Proposition \ref{pro:Blowup1}, Proposition \ref{pro:Blowup6} and \eqref{eq:B9-1}, we have
$$
\begin{aligned}
&\Big| \int_{B_{1}(x_i)} \nabla a_{i}(y) u_{i}(y)^{2} \,\ud y\Big|\\
\leq& Cm_i^{-2}+Cm_i^{-1}\int_{B_{1}(x_i)} a_{i}(y) u_{i}(y)+u_{i}(y)^{p_{i}}\,\ud y \\
\leq& Cm_i^{-2}.
\end{aligned}
$$
We write
$$
\begin{aligned}
&\int_{B_{1}(x_i)} \nabla a_{i}(y) u_{i}(y)^{2} \,\ud y \\
=&\nabla a_{i}(x_{i}) \int_{B_{1}(x_i)}  u_{i}(y)^{2} \,\ud y+\int_{B_{1}(x_i)}(\nabla a_{i}(y)-\nabla a_{i}(x_{i})) u_{i}(y)^{2} \,\ud y,
\end{aligned}
$$
by the triangle inequality and Corollary \ref{cor:Blowup7} we have
$$
\begin{aligned}
\Big|\nabla a_{i}(x_{i}) \int_{B_{1}(x_i)}  u_{i}(y)^{2} \,\ud y\Big| \leq & C m_i^{-2}+C \int_{B_{1}(x_i)}|\nabla a_{i}(y)-\nabla a_{i}(x_{i})| u_{i}(y)^{2} \,\ud y \\
\leq& C\left\{\begin{array}{ll}
m_{i}^{-2}(1+\|\nabla^{2} a_{i}\|_{L^{\infty}(B_{1})}) \quad& \text { if }\, n<4 \sigma+1, \\
m_{i}^{-2}(1+\|\nabla^{2} a_{i}\|_{L^{\infty}(B_{1})} \ln m_{i})\quad & \text { if }\, n=4 \sigma+1,\\
m_{i}^{-2}(1+\|\nabla^{2} a_{i}\|_{L^{\infty}(B_{1})} m_{i}^{(2n-8\sigma-2)/(n-2 \sigma)}) \quad& \text { if }\, n>4 \sigma+1.
\end{array}\right.
\end{aligned}
$$
By Proposition \ref{pro:Blowup2} and Lemma \ref{lem:Blowup4},
\begin{equation}\label{eq:B9-2}
\int_{B_1(x_i)} u_{i}(y)^{2}\,\ud y \geq C m_{i}^{-{4 \sigma}/(n-2 \sigma)} \int_{B_{m_i^{(p_i-1)/2\sigma}}}  \frac{1}{(1+\bar{c}|y|^{2})^{n-2 \sigma}} \,\ud y.
\end{equation}
Therefore, desired estimates of $|\nabla a_{i}(x_{i})|$ follows.

Using Proposition \ref{pro:Pohozaev} and Lemma \ref{lem:Blowup4}, by the same proof of Lemma \ref{lem:Blowup4} we have
\begin{equation}\label{eq:tauiestimate}
\begin{aligned}
\tau_i\leq&Cu_{i}(x_{i})^{-2}+  C\int_{B_{1}(x_{i})} |y-x_{i}| |\nabla a_{i}(y)|u_i(y)^2\,\ud y\\
\leq&Cu_{i}(x_{i})^{-2}+C|\nabla a_{i}(x_{i})| \int_{B_{1}(x_{i})}|y-x_{i}| u_{i}(y)^{2} \,\ud y \\
&+C\|\nabla^{2} a_{i}\|_{L^{\infty}(B_{1})} \int_{B_{1}(x_{i})}|y-x_{i}|^{2} u_{i}(y)^{2} \,\ud y\\
\leq&C|\nabla a_{i}(x_{i})| \int_{B_{1}(x_{i})}|y-x_{i}| u_{i}(y)^{2} \,\ud y \\
&+ C\left\{\begin{array}{ll}
m_{i}^{-2}(1+\|\nabla^{2} a_{i}\|_{L^{\infty}(B_{1})}) \quad& \text { if }\, n<4 \sigma+2, \\
m_{i}^{-2}(1+\|\nabla^{2} a_{i}\|_{L^{\infty}(B_{1})} \ln m_{i}) \quad& \text { if }\, n=4 \sigma+2, \\
m_{i}^{-2}(1+\|\nabla^{2} a_{i}\|_{L^{\infty}(B_{1})} m_{i}^{(2n-8 \sigma-4)/(n-2 \sigma)}) \quad& \text { if }\, n>4 \sigma+2,
\end{array}\right.\\
\leq&C\left\{\begin{array}{ll}
m_{i}^{-2}(1+\|\nabla^{2} a_{i}\|_{L^{\infty}(B_{1})}) \quad& \text { if }\, n<4 \sigma+2, \\
m_{i}^{-2}(1+\|\nabla^{2} a_{i}\|_{L^{\infty}(B_{1})} \ln m_{i}) \quad& \text { if }\, n=4 \sigma+2, \\
m_{i}^{-2}(1+\|\nabla^{2} a_{i}\|_{L^{\infty}(B_{1})} m_{i}^{(2n-8 \sigma-4)/(n-2 \sigma)}) \quad& \text { if }\, n>4 \sigma+2.
\end{array}\right.
\end{aligned}
\end{equation}
Using Proposition \ref{pro:Pohozaev} again, by the estimates for $|\nabla a_{i}(x_{i})|$, \eqref{eq:B9-2}, and \eqref{eq:tauiestimate}, the estimates of $a_{i}(x_{i})$ follows immediately.
\end{proof}

\section{Expansions of blow up solutions}
\begin{lemma}\label{lem:Expansions1}
For $\ell>100$, $0<\alpha<n$ and $\alpha \leq \mu$, we have
$$
\int_{|y|\leq \ell} \frac{1}{|x-y|^{n-\alpha}} \frac{1}{(1+|y|)^{\mu}} \,\ud y \leq C\left\{\begin{array}{ll}
\ln (\frac{\ell}{r}+1) & \text { if }\, \mu=\alpha, \\
(1+r)^{\alpha-\mu} & \text { if }\, \alpha<\mu<n, \\
(1+r)^{\alpha-n} \ln (2+r) & \text { if }\, \mu=n, \\
(1+r)^{\alpha-n} & \text { if }\, \mu>n,
\end{array}\right.
$$
for all $r=|x|<\ell$, where $C>0$ is independent of $\ell$.
\end{lemma}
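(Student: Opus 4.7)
The plan is a standard three-piece decomposition of the integration domain tailored to the singularity at $y=x$ and the decay at $|y|\to\infty$. Setting $r=|x|$, I would write $B_\ell=A_1\cup A_2\cup A_3$ where
$$
A_1=\{|y|\leq r/2\},\quad A_2=\{r/2<|y|<2r\},\quad A_3=\{2r\leq|y|\leq\ell\},
$$
and estimate the three contributions separately. On $A_1$ the triangle inequality gives $|x-y|\geq r/2$, so the integral is bounded by $Cr^{\alpha-n}\int_{|y|\leq r/2}(1+|y|)^{-\mu}\,\ud y$; in polar coordinates the remaining integral is $O(r^n)$ when $r\leq 1$, and when $r\geq 1$ it is $O(r^{n-\mu})$, $O(\ln r)$, or $O(1)$ according as $\mu<n$, $\mu=n$, or $\mu>n$. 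On $A_2$ one has $(1+|y|)^{-\mu}\leq C(1+r)^{-\mu}$ since $1+|y|\geq\tfrac{1}{2}(1+r)$, and because $0<\alpha<n$ the remaining Riesz-type integral satisfies $\int_{|x-y|\leq 3r}|x-y|^{\alpha-n}\,\ud y\leq Cr^\alpha$, yielding the bound $C(1+r)^{-\mu}r^\alpha$.

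On $A_3$ I would use $|x-y|\geq|y|/2$ to reduce to the one-dimensional integral $\int_{2r}^\ell s^{\alpha-1}(1+s)^{-\mu}\,\ud s$; this is the source of the four cases in the statement. Splitting at $s=1$ when $r<1$ isolates the tail $\int_{\max(1,2r)}^\ell s^{\alpha-\mu-1}\,\ud s$, which contributes $\ln(\ell/r)$ when $\mu=\alpha$ and $(1+r)^{\alpha-\mu}$ when $\alpha<\mu<n$; when $\mu\geq n$ we bound $(1+s)^{-\mu}\leq Cs^{-n}$ on $s\geq 1$ and obtain $r^{\alpha-n}\ln(2+r)$ or $r^{\alpha-n}$ depending on whether $\mu=n$ or $\mu>n$. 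The inner piece $\int_{2r}^1 s^{\alpha-1}\,\ud s$ (present only when $r<1$) is $O(1)$ since $\alpha>0$, and is absorbed by the stated bound in every case.

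Finally I would sum the three contributions and distinguish the subcases $r\leq 1$ versus $r\geq 1$; in each of the four regimes $\mu=\alpha$, $\alpha<\mu<n$, $\mu=n$, $\mu>n$ a direct check shows the sum is controlled by the right-hand side of the lemma, using $\ell>100$ only to absorb additive $O(1)$ terms into the logarithm when $\mu=\alpha$ (so that $\ln(\ell/r+1)\geq\ln 101$). The main obstacle is simply organizing the case-work cleanly; no new analytic ingredient is required beyond polar coordinates and the elementary identity $\int_{|x-y|\leq R}|x-y|^{\alpha-n}\,\ud y=cR^\alpha$ for $0<\alpha<n$.
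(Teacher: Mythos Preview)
Your proposal is correct and follows the standard textbook route. The paper, however, takes a slightly different and more compressed path: it first rescales via $y=rz$ so that $|x/r|=1$, then splits into only \emph{two} pieces, $\{|z|\leq 1/10\}$ (where the Riesz kernel $|x/r-z|^{\alpha-n}$ is bounded away from its singularity) and $\{|z|\geq 1/10\}$ (where $(1+r|z|)^{-\mu}\leq Cr^{-\mu}|z|^{-\mu}$, reducing the problem to a pure power-law convolution on an annulus, in the spirit of the earlier estimate \eqref{eq:usefuline}). Your three-piece split $\{|y|\leq r/2\}\cup\{r/2<|y|<2r\}\cup\{|y|\geq 2r\}$ avoids the scaling and keeps everything in the original variables, at the price of one extra region and a bit more case-work when summing. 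Neither approach requires any idea the other lacks.

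One small correction to your write-up: in the case $\mu=n$, the logarithmic factor $\ln(2+r)$ in the final bound comes from the $A_1$ piece (since $\int_{|y|\leq r/2}(1+|y|)^{-n}\,\ud y\sim\ln r$ for $r\gg 1$), not from $A_3$, whose tail $\int_{\max(1,2r)}^{\ell}s^{\alpha-n-1}\,\ud s$ is only $O(r^{\alpha-n})$. This does not affect the correctness of the final sum you describe, but the attribution of the log should be fixed.
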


\begin{proof}
By change of variables $y=r z$, we have
$$
\begin{aligned}
&\int_{|y|\leq \ell}  \frac{1}{|x-y|^{n-\alpha}} \frac{1}{(1+|y|)^{\mu}} \,\ud y \\
=&r^{\alpha} \int_{|z|\leq \ell/r} \frac{1}{|x / r-z|^{n-\alpha}} \frac{1}{(1+r|z|)^{\mu}} \,\ud  z \\
=&r^{\alpha} \int_{|z| \leq 1 / 10}+\int_{1/10\leq|z| \leq \ell/r} \frac{1}{|x / r-z|^{n-\alpha}} \frac{1}{(1+r|z|)^{\mu}} \,\ud  z \\
\leq& C r^{\alpha} \int_{|z| \leq 1 / 10} \frac{1}{(1+r|z|)^{\mu}} \,\ud  z+C r^{\alpha-\mu} \int_{1/10\leq|z| \leq \ell/r} \frac{1}{|x / r-z|^{n-\alpha}} \frac{1}{|z|^{\mu}} \,\ud  z.
\end{aligned}
$$
The lemma follows immediately.
\end{proof}

Let
$$
\Theta_{\lambda}(z)=\Big(\frac{\lambda}{1+\bar{c}\lambda^{2} |z|^{2}}\Big)^{\frac{n-2 \sigma}{2}},
$$
where $\bar{c}$ is chosen as in Proposition \ref{pro:Blowup1}. Notice that $\Theta_{\lambda}(z)$ satisfies
\begin{equation}\label{eq:Thetalam}
\Theta_{\lambda}(z)=\int_{\mathbb{R}^{n}} \frac{\Theta_{\lambda}(y)^{\frac{n+2 \sigma}{n-2 \sigma}}}{|z-y|^{n-2 \sigma}} \,\ud y.
\end{equation}
In the following we will adapt some arguments from Marques \cite{MA2005} for the Yamabe equation; see also Li-Zhang \cite{LZCompactnessII2005}, Niu-Peng-Xiong \cite{NPXCompactness2018}, and Li-Xiong \cite{LXCompactness2019}.

\begin{lemma}\label{lem:Expansions2}
Assume as in Lemma \ref{lem:Blowup3}. Suppose $\rho=1 .$ If $n \geq 4 \sigma$, we have for $|x| \leq m_{i}^{(p_{i}-1)/2 \sigma}$,
$$
|\Phi_{i}(x)-\Theta_{1}(x)| \leq C\left\{\begin{array}{ll}
m_{i}^{-2}(1+\|\nabla^{2} a_{i}\|_{L^{\infty}(B_{1})}) & \text { if }\, n<4 \sigma+2, \\
m_{i}^{-2}(1+\|\nabla^{2} a_{i}\|_{L^{\infty}(B_{1})} \ln m_{i}) & \text { if }\, n=4 \sigma+2, \\
m_{i}^{-2}(1+\|\nabla^{2} a_{i}\|_{L^{\infty}(B_{1})} m_{i}^{(2n-8\sigma-4)/(n-2 \sigma)}) & \text { if }\, n>4 \sigma+2,
\end{array}\right.
$$
where $\Phi_{i}(x)=m_{i}^{-1} u_{i}(m_{i}^{-(p_{i}-1)/2 \sigma} x+x_{i})$, $m_{i}=u_{i}(x_i)$, and $C>0$ depends only on $n, \sigma$ and $A_{0} .$
\end{lemma}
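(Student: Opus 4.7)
The plan is to realise $w_i := \Phi_i - \Theta_1$ as the solution of a perturbed integral equation and to control each piece of the perturbation by the tools already established, namely Propositions \ref{pro:Blowup1}--\ref{pro:Blowup6}, Lemma \ref{lem:Blowup9} and the convolution estimate Lemma \ref{lem:Expansions1}. Rescaling \eqref{eq:intui=} about $x_i$ and writing $R_i^\ast := m_i^{(p_i-1)/2\sigma}$, $p^\ast := (n+2\sigma)/(n-2\sigma)$, I would subtract \eqref{eq:Thetalam} from the equation for $\Phi_i$ to obtain
\begin{equation*}
w_i(x) = I_1(x) + I_2(x) + I_3(x) + I_4(x),\qquad |x|\le R_i^\ast,
\end{equation*}
where $I_1(x) := \int_{B_{R_i^\ast}} \tfrac{\Phi_i(y)^{p_i}-\Theta_1(y)^{p^\ast}}{|x-y|^{n-2\sigma}}\,\ud y$ is the nonlinear difference, $I_2(x) := m_i^{1-p_i}\int_{B_{R_i^\ast}} \tfrac{a_i(m_i^{-(p_i-1)/2\sigma}y+x_i)\Phi_i(y)}{|x-y|^{n-2\sigma}}\,\ud y$ is the potential contribution, $I_3(x) := -\int_{\mathbb{R}^n \setminus B_{R_i^\ast}} \tfrac{\Theta_1(y)^{p^\ast}}{|x-y|^{n-2\sigma}}\,\ud y$ is the tail produced by extending $\Theta_1$ to $\mathbb{R}^n$, and $I_4(x) := \tilde h_i(x)$ is the smooth remainder from \eqref{eq:B1-3}.

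I would first dispense with the ``source'' terms $I_2,I_3,I_4$ pointwise. The bound $|I_4(x)| = O(m_i^{-2+o(1)})$ follows from \eqref{eq:tildehto0} and the rescaled version of \eqref{eq:hinablahi}; $|I_3(x)| \le C(1+|x|)^{2\sigma-n}(R_i^\ast)^{-4\sigma+o(1)}$ follows from $\Theta_1(y)^{p^\ast}\asymp (1+|y|)^{-(n+2\sigma)}$ and Lemma \ref{lem:Expansions1} applied on $|y|\ge R_i^\ast$, which lies comfortably within the target. For $I_2$ I would Taylor-expand $a_i(m_i^{-(p_i-1)/2\sigma}y+x_i)$ to second order about $x_i$ and feed each term into Lemma \ref{lem:Expansions1}; the coefficients $a_i(x_i)$, $|\nabla a_i(x_i)|$ and $\|\nabla^2 a_i\|_{L^\infty}$ are controlled by Lemma \ref{lem:Blowup9}, and the three dimensional cutoffs in the conclusion arise precisely from the three regimes of $\mu$ in Lemma \ref{lem:Expansions1}.

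The heart of the proof is the bound on $I_1$. Splitting
\begin{equation*}
\Phi_i^{p_i}-\Theta_1^{p^\ast} = \bigl(\Phi_i^{p_i}-\Phi_i^{p^\ast}\bigr) + p^\ast\Theta_1^{p^\ast-1}w_i + \bigl[\Phi_i^{p^\ast}-\Theta_1^{p^\ast}-p^\ast\Theta_1^{p^\ast-1}w_i\bigr],
\end{equation*}
the first bracket is $O(\tau_i \Phi_i^{p^\ast}|\log\Phi_i|)$ and, via Lemma \ref{lem:Blowup4}, contributes within the target. Introducing the weighted norm $N_i := \sup_{|x|\le R_i^\ast}(1+|x|)^{n-2\sigma}|w_i(x)|$, Lemma \ref{lem:Expansions1} with $\alpha = 2\sigma$ and $\mu = n-2\sigma$ applied to the convolution against $\Theta_1^{p^\ast-1} \asymp (1+|y|)^{-4\sigma}$ produces bounds of the form $(1+|x|)^{2\sigma-n}[\varepsilon N_i + C\cdot(\text{target})]$ for the linearised piece; the quadratic remainder is handled similarly using $\Phi_i, \Theta_1 \le C(1+|y|)^{2\sigma-n}$ together with the $C^2_{\mathrm{loc}}$ convergence of Proposition \ref{pro:Blowup1}. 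Closing the loop yields $N_i \le \varepsilon N_i + C\cdot(\text{target})$, and absorbing $\varepsilon N_i$ finishes the proof.

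The main obstacle is the nontrivial kernel of the linearised operator $w \mapsto p^\ast \int \Theta_1(y)^{p^\ast-1} w(y)\,|x-y|^{2\sigma-n}\,\ud y$, spanned by $\partial_{x_k}\Theta_1$ and the dilation generator $\partial_\lambda \Theta_\lambda|_{\lambda=1}$. A naive weighted contraction cannot absorb this; one must exploit the normalisations $\Phi_i(0)=1=\Theta_1(0)$ and $\nabla \Phi_i(0)=0=\nabla \Theta_1(0)$ from \eqref{eq:B1-4}, which force $w_i$ to be approximately orthogonal to the kernel directions. This is the same device used by Marques \cite{MA2005} and Li--Zhang \cite{LZCompactnessII2005} in the Yamabe setting and by Li--Xiong \cite{LXCompactness2019} in the $Q$-curvature setting; adapting it to the present Riesz-potential integral equation is the technical core.
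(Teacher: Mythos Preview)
Your decomposition into $I_1,\dots,I_4$ and your treatment of the source terms $I_2,I_3,I_4$ are essentially what the paper does; the Taylor expansion of $a_i$ combined with Lemma~\ref{lem:Blowup9} and Lemma~\ref{lem:Expansions1} is exactly how the potential term is controlled, and the tail and $\tilde h_i$ bounds are $O(m_i^{-2})$ as you say.

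The gap is in your handling of $I_1$. You assert that the linearised convolution $p^\ast\int \Theta_1^{p^\ast-1}w_i\,|x-y|^{2\sigma-n}\,\ud y$, under the control $|w_i|\le N_i(1+|y|)^{2\sigma-n}$, returns $(1+|x|)^{2\sigma-n}[\varepsilon N_i+\cdots]$ with a small $\varepsilon$; but Lemma~\ref{lem:Expansions1} only gives a fixed constant here, not a small one, and indeed no convolution inequality can do better because the operator has norm $1$ on the kernel you yourself flag. Your final paragraph names the obstruction correctly but does not remove it: ``approximate orthogonality from the normalisations'' is not enough to run a contraction, since the kernel directions decay like $(1+|x|)^{2\sigma-n}$ and sit exactly in your weighted space. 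A secondary issue is that the target bound in the lemma is an unweighted $L^\infty$ bound, whereas on $|x|\sim R_i^\ast$ one only has $|\Phi_i|+|\Theta_1|\le Cm_i^{-2}$, so $N_i$ as defined need not be small; your contraction, even if it closed, would be proving a stronger statement than the one claimed.

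The paper avoids the contraction entirely and argues by contradiction and compactness. One sets $\Lambda_i:=\max_{|x|\le\ell_i}|\Phi_i-\Theta_1|$ and supposes $\Lambda_i/\alpha_i\to\infty$, where $\alpha_i$ is the right-hand side. Then $V_i:=(\Phi_i-\Theta_1)/\Lambda_i$ is bounded by $1$ and satisfies an integral equation in which all source terms (your $I_2,I_3,I_4$ and the $\tau_i$-piece of $I_1$) are $o(1)$. A single pass through Lemma~\ref{lem:Expansions1} gives $|V_i(x)|\le C(1+|x|)^{-\sigma}+o(1)$, which forces the maximum of $|V_i|$ to be attained at a bounded point $z_i$. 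Passing to the limit, $V_i\to v$ in $C^2_{loc}$ with $v$ a bounded solution of the linearised equation; the non-degeneracy result then puts $v$ in the span of $\partial_j\Theta_1$ and $\tfrac{n-2\sigma}{2}\Theta_1+x\cdot\nabla\Theta_1$. Now the normalisations $V_i(0)=0$, $\nabla V_i(0)=0$ are used \emph{after} the limit, as exact pointwise constraints $v(0)=\nabla v(0)=0$, which kill all kernel coefficients and give $v\equiv 0$, contradicting $|V_i(z_i)|=1$. This is the device from \cite{MA2005,LZCompactnessII2005,LXCompactness2019} in its actual form: a normalisation/compactness argument, not an orthogonality-in-a-weighted-space contraction.
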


\begin{proof}
For brevity, let $\ell_{i}=m_{i}^{(p_{i}-1)/2 \sigma}$. By the equation satisfied by $u_{i}$, we have
\begin{equation}\label{eq:Phieq}
\Phi_{i}(x)=\int_{B_{\ell_i}} \frac{\tilde{a}_{i}(y) \Phi_{i}(y)+\Phi_{i}(y)^{p_{i}}}{|x-y|^{n-2 \sigma}} \,\ud y+\bar{b}_{i}(x),
\end{equation}
where
\begin{equation}\label{eq:tildeai}
\tilde{a}_{i}(y)=m_{i}^{1-p_{i}} a_{i}(\ell_{i}^{-1} y+x_{i}),
\end{equation}
\begin{equation}\label{eq:barhi}
\bar{b}_{i}(x)=m_{i}^{-1} b_{i}(\ell_{i}^{-1} x+x_{i}),
\end{equation}
and
$$
b_{i}(x)=\int_{B_{3} \backslash B_{1}(x_i)} \frac{a_{i}(y) u_{i}(y)+u_{i}(y)^{p_{i}}}{|x-y|^{n-2 \sigma}} \,\ud y+h_i(x).
$$
Since 0 is an isolated simple blow up point of $u_{i}$, it follows from Proposition \ref{pro:Blowup6} that
$$
u_{i}(x) \leq C m_{i}^{-1}|x-x_i|^{2\sigma-n} \quad \text { for all }\,|x-x_i|\leq1.
$$
Using the same argument as in the proof of \eqref{eq:binablabi}, we have $\bar{b}_{i}(x) \leq C m_{i}^{-2}$ for $x \in B_{\ell_{i}}$.

Let $x=\ell_{i} z$. By \eqref{eq:Thetalam} with $\lambda=\ell_{i}$ we have for $|x| \leq \ell_{i}$,
\begin{equation}\label{eq:Theta1eq}
\begin{aligned}
\Theta_{1}(x) &=\int_{B_{\ell_i}} \frac{\Theta_{1}(y)^{\frac{n+2 \sigma}{n-2 \sigma}}}{|x-y|^{n-2 \sigma}} \,\ud y+\ell_i^{\frac{2\sigma-n}{2}}\theta_{\ell_i}(\ell_i^{-1}x) \\
&=:\int_{B_{\ell_i}} \frac{\Theta_{1}(y)^{p_i}+T_i(y)}{|x-y|^{n-2 \sigma}} \,\ud y+\bar{\theta}_i(x),
\end{aligned}
\end{equation}
where
\begin{equation}\label{eq:Ti}
T_{i}(y)=\Theta_{1}(y)^{\frac{n+2\sigma}{n-2\sigma}}-\Theta_{1}(y)^{p_{i}},
\end{equation}
\begin{equation}\label{eq:barthetai}
\bar{\theta}_i(x)=\ell_{i}^{\frac{2 \sigma-n}{2}} \theta_{\ell_{i}}(\ell_{i}^{-1} x),
\end{equation}
and
$$
\theta_{\lambda}(z)=:\int_{B_1^c} \frac{\Theta_{\lambda}(y)^{\frac{n+2 \sigma}{n-2 \sigma}}}{|z-y|^{n-2 \sigma}} \,\ud y.
$$
Using the same argument as in the proof of \eqref{eq:binablabi} again, we have $\bar{\theta}_{i}(x)\leq C m_{i}^{-2}$ for $x \in B_{\ell_{i}}$.

Let
$$
\Lambda_{i}=\max _{|x| \leq \ell_{i}}|\Phi_{i}(x)-\Theta_{1}(x)|.
$$
By Proposition \ref{pro:Blowup6} and Lemma \ref{lem:Harnack}, we have for any $0<\varepsilon<1$ and $\varepsilon \ell_{i} \leq|x| \leq \ell_{i}$,
$$
|\Phi_{i}(x)-\Theta_{1}(x)| \leq |\Phi_{i}(x)|+|\Theta_{1}(x)|\leq C(\varepsilon) m_{i}^{-2},
$$
where we used $m_{i}^{\tau_{i}}=1+o(1)$. Hence, we may assume that $\Lambda_{i}$ is achieved at some point $|z_{i}| \leq \ell_{i}/2$, otherwise the proof is finished. Let
$$
V_{i}(x):=\frac{\Phi_{i}(x)-\Theta_{1}(x)}{\Lambda_{i}}.
$$
It follows from \eqref{eq:Phieq} and \eqref{eq:Theta1eq} that $V_{i}$ satisfies
\begin{equation}\label{eq:Vieq}
V_i(x)=\int_{B_{\ell_i}} \frac{c_i(y)V_i(y)+\Lambda_i^{-1}\tilde{a}_{i}(y) \Phi_{i}(y)-\Lambda_i^{-1}T_{i}(y)}{|x-y|^{n-2 \sigma}} \,\ud y+\frac{\bar{b}_{i}(x)-\bar{\theta}_{i}(x)}{\Lambda_i},
\end{equation}
where
\begin{equation}\label{eq:bi}
c_{i}(y)=\frac{\Phi_{i}(y)^{p_{i}}-\Theta_{1}(y)^{p_i}}{\Phi_{i}(y)-\Theta_{1}(y)}.
\end{equation}
By Taylor expansion of $a_{i}$ at $x_{i}$, we have
$$
a_{i}(\ell_{i}^{-1} y+x_{i}) \leq a_{i}(x_{i})+\ell_{i}^{-1}|y||\nabla a_{i}(x_{i})|+\ell_{i}^{-2}|y|^{2}\|\nabla^{2} a_{i}\|_{L^{\infty}(B_1)}.
$$
Since $\Phi_{i}(y) \leq C \Theta_{1}(y)$, it follows from Lemma \ref{lem:Expansions1} and Lemma \ref{lem:Blowup9} that
$$
\int_{B_{\ell_i}} \frac{\tilde{a}_{i}(y) \Phi_{i}(y)}{|x-y|^{n-2\sigma}} \,\ud y \leq C \alpha_{i}
$$
with
\begin{equation}\label{eq:alphai}
\alpha_{i}:=\left\{\begin{array}{ll}
m_{i}^{-2}(1+\|\nabla^{2} a_{i}\|_{L^{\infty}(B_{1})}) \quad& \text { if }\, 4\sigma\leq n<4 \sigma+2, \\
m_{i}^{-2}(1+\|\nabla^{2} a_{i}\|_{L^{\infty}(B_{1})} \ln m_{i}) \quad& \text { if }\, n=4 \sigma+2, \\
m_{i}^{-2}(1+\|\nabla^{2} a_{i}\|_{L^{\infty}(B_{1})} m_{i}^{(2n-8\sigma-4)/(n-2 \sigma)}) \quad& \text { if }\, n>4 \sigma+2,
\end{array}\right.
\end{equation}
and
\begin{equation}\label{eq:|bi|}
|c_{i}(y)|\leq C \Theta_{1}(y)^{p_{i}-1} \leq C(1+|y|)^{-3 \sigma}, \quad y \in B_{\ell_{i}}.
\end{equation}
Noticing that $\|V_{i}\|_{L^{\infty}(B_{\ell_{i}})} \leq 1$, we have
$$
\int_{B_{\ell_i}} \frac{|c_i(y)V_i(y)|}{|x-y|^{n-2 \sigma}} \,\ud y\leq C\int_{B_{\ell_i}} \frac{1}{|x-y|^{n-2 \sigma}(1+|y|)^{3\sigma}} \,\ud y\leq C(1+|x|)^{-\sigma}.
$$
Since
\begin{equation}\label{eq:|Ti|}
|T_{i}(y)|\leq C\tau_i|\log \Theta_1(y)|(1+|y|)^{-p_i(n-2\sigma)},
\end{equation}
we have
$$
\int_{B_{\ell_i}} \frac{|T_{i}(y)|}{|x-y|^{n-2 \sigma}} \,\ud y\leq C\tau_i\quad \text{ for }\, |x|\leq \ell_i/2.
$$
Hence, we obtain
\begin{equation}\label{eq:|Vi|}
|V_{i}(x)| \leq C(1+|x|)^{-\sigma}+\frac{C}{\Lambda_{i}}(\tau_i+\alpha_i+m_i^{-2})\quad\text{ for }\, |x|\leq\ell_i/2.
\end{equation}

If Lemma \ref{lem:Expansions2} were wrong, using \eqref{eq:tauiestimate} and the definition of $\alpha_i$, we have
$$
\|V_{i}\|_{L^{\infty}({B}_{\ell_{i} / 2})}\leq C(\ell_{i}^{-\sigma}+\frac{\tau_{i}+\alpha_{i}+m_{i}^{-2}}{\Lambda_{i}})\leq C(\ell_{i}^{-\sigma}+\frac{\alpha_{i}}{\Lambda_{i}}) \rightarrow 0
$$
as $i \rightarrow \infty$. By regularity theory in \cite{JLXThe2017}, $\{V_{i}(y)\}$ are locally uniformally bounded in $C^{2,\alpha}$ for some $0<\alpha<1 .$ By \eqref{eq:Vieq}, it follows from Arzelà-Ascoli theorem and Lebesgue dominated convergence theorem that, after passing to subsequence,
$$
V_{i}(x) \rightarrow v(x) \quad \text { in }\, C_{l o c}^{2}(\mathbb{R}^{n})
$$
for some $v \in C_{l o c}^{2}(\mathbb{R}^{n}) \cap L^{\infty}(\mathbb{R}^{n})$ satisfying
$$
v(x)=C(n, \sigma) \int_{\mathbb{R}^{n}} \frac{\Theta_{1}(y)^{\frac{4 \sigma}{n-2 \sigma}} v(y)}{|x-y|^{n-2 \sigma}} \,\ud y.
$$
It follows from the non-degeneracy result, see, e.g., Lemma 5.1 of \cite{LXCompactness2019}, that
$$
v(x)=c_{0}\Big(\frac{n-2 \sigma}{2} \Theta_{1}(x)+x \cdot\nabla \Theta_{1}(x)\Big)+\sum_{j=1}^{n} c_{j} \partial_{j} \Theta_{1}(x),
$$
where $c_{0}, \cdots, c_{n}$ are constants. By Proposition \ref{pro:Blowup1}, $v(0)=0$ and $\nabla v(0)=0$. Hence, $v\equiv0$. However, $v(z_{i})=1 .$ We obtain a contradiction, and the lemma follows.
\end{proof}

\begin{lemma}\label{lem:Expansions3}
Under the hypotheses in Lemma \ref{lem:Expansions2}, we have, for every $|x| \leq m_{i}^{(p_{i}-1)/2\sigma}$,
$$
\begin{aligned}
&|\Phi_{i}(x)-\Theta_{1}(x)| \\
\leq& C\left\{\begin{array}{ll}
\max \{m_{i}^{-2}, \|\nabla^{2} a_{i}\|_{L^{\infty}(B_{1})} m_{i}^{-2+{2 \sigma}/(n-2 \sigma)}(1+|x|)^{-\sigma}\} & \text { if }\, n=4 \sigma+2, \\
\max \{m_{i}^{-2}, \|\nabla^{2} a_{i}\|_{L^{\infty}(B_{1})} m_{i}^{-2+{(2n-8 \sigma-4)}/(n-2 \sigma)}(1+|x|)^{4 \sigma+2-n}\} & \text { if }\, n>4 \sigma+2.
\end{array}\right.
\end{aligned}
$$
\end{lemma}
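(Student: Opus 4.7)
The plan is to refine Lemma \ref{lem:Expansions2} via its own contradiction/rescaling scheme, now measuring the error $\Phi_i - \Theta_1$ against a weight $W_i$ that incorporates the anticipated spatial decay. Set
\begin{equation*}
W_i(x) := \max\bigl\{m_i^{-2},\ \gamma_i (1+|x|)^{\kappa}\bigr\},\qquad \Lambda_i := \sup_{|x|\leq \ell_i} \frac{|\Phi_i(x)-\Theta_1(x)|}{W_i(x)},
\end{equation*}
where $\ell_i = m_i^{(p_i-1)/2\sigma}$, $\kappa = -\sigma$ and $\gamma_i=\|\nabla^2 a_i\|_{L^\infty(B_1)} m_i^{-2+2\sigma/(n-2\sigma)}$ when $n = 4\sigma+2$, and $\kappa = 4\sigma+2-n$ and $\gamma_i = \|\nabla^2 a_i\|_{L^\infty(B_1)} m_i^{-2+(2n-8\sigma-4)/(n-2\sigma)}$ when $n > 4\sigma+2$. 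The goal is $\Lambda_i \leq C$. By contradiction, assume $\Lambda_i \to \infty$; the supremum is then attained at some $z_i \in \overline{B_{\ell_i}}$, and arguing as in the end of the proof of Lemma \ref{lem:Expansions2} (using Proposition \ref{pro:Blowup6}, Lemma \ref{lem:Harnack}, and the bound of Lemma \ref{lem:Expansions2} itself on the outer annulus) one sees $|z_i| \leq \ell_i/2$.

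Setting $V_i(x) := (\Phi_i(x)-\Theta_1(x))/(\Lambda_i W_i(z_i))$ and using the integral representation \eqref{eq:Vieq}, I bound each source term pointwise by $CW_i(x)/W_i(z_i)$. The main forcing $\int_{B_{\ell_i}} \tilde a_i(y)\Phi_i(y)/|x-y|^{n-2\sigma}\,dy$ is handled by Taylor-expanding $a_i(\ell_i^{-1}y+x_i)$ up to second order and using Lemma \ref{lem:Blowup9} to bound $a_i(x_i)$ and $|\nabla a_i(x_i)|$; the dominant Hessian contribution
\begin{equation*}
\|\nabla^2 a_i\|_{L^\infty}\,\ell_i^{-2}\,m_i^{1-p_i}\int_{B_{\ell_i}}\frac{|y|^2 \Theta_1(y)}{|x-y|^{n-2\sigma}}\,dy
\end{equation*}
is evaluated by Lemma \ref{lem:Expansions1} with $\alpha = 2\sigma$ and $\mu = n-2\sigma-2$, giving $\leq C\gamma_i(1+|x|)^{\kappa}$ for $n>4\sigma+2$ (and a logarithmic factor when $n=4\sigma+2$, absorbed below). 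The self-interaction $\int c_i(\Phi_i-\Theta_1)/|x-y|^{n-2\sigma}\,dy$ is bounded starting from the uniform estimate of Lemma \ref{lem:Expansions2} and combining with $|c_i(y)|\leq C(1+|y|)^{-4\sigma+o(1)}$; a first application of Lemma \ref{lem:Expansions1} yields decay $(1+|x|)^{-2\sigma}$, and iterating a finite number of times (at most $\lceil(n-4\sigma-2)/(2\sigma)\rceil$ steps) improves the decay so as to beat $(1+|x|)^{\kappa}$. The remaining contributions from $T_i$ (using $\tau_i = O(m_i^{-2+o(1)})$ from \eqref{eq:tauiestimate}) and from $\bar b_i - \bar\theta_i$ are of order $m_i^{-2}$ with even faster spatial decay, hence absorbed into $W_i$.

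With these estimates, Proposition \ref{pro:JLXSchauder} and a diagonal argument extract a subsequential $C^{2,\alpha}_{\mathrm{loc}}$ limit $v\in L^\infty_{\mathrm{loc}}(\mathbb{R}^n)$ of $V_i$ (recentered at $z_i$ if necessary) satisfying
\begin{equation*}
v(x) = C(n,\sigma)\int_{\mathbb{R}^n}\frac{\Theta_1(y)^{4\sigma/(n-2\sigma)}\,v(y)}{|x-y|^{n-2\sigma}}\,dy.
\end{equation*}
The non-degeneracy result (Lemma 5.1 of \cite{LXCompactness2019}) forces $v$ to be a linear combination of $\tfrac{n-2\sigma}{2}\Theta_1 + x\cdot\nabla\Theta_1$ and $\partial_j\Theta_1$; the normalizations $\Phi_i(0)=1$, $\nabla\Phi_i(0)=0$ pass to the limit as $v(0)=0$, $\nabla v(0)=0$, so $v \equiv 0$, contradicting $|V_i(z_i)| = 1$.

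The principal obstacle is the self-interaction bootstrap: for large $n$ one must iterate Lemma \ref{lem:Expansions1} several times before the self-interaction decays faster than $(1+|x|)^{\kappa}$, and in the borderline case $n = 4\sigma+2$ one must carefully absorb a logarithmic factor arising from the Hessian forcing into the $(1+|x|)^{-\sigma}$ decay produced by the first self-interaction step --- which is precisely why the coefficient $\gamma_i$ takes the apparently loose form $\|\nabla^2 a_i\|_{L^\infty} m_i^{-2+2\sigma/(n-2\sigma)}$ when $n=4\sigma+2$. A secondary technical point is the case $|z_i|\to\infty$: one recenters the rescaling at $z_i$ and uses that $W_i(x)/W_i(z_i)$ stays bounded on balls shifted by $z_i$, so the same linearized equation and non-degeneracy argument deliver the same contradiction.
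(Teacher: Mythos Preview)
Your approach is genuinely different from the paper's, and while it can be made to work, it is considerably more involved. The paper does \emph{not} rerun the contradiction/non-degeneracy argument here. Instead it normalizes by the \emph{constant} $\alpha_i'$ (your $\gamma_i$), notes that $V_i'=(\Phi_i-\Theta_1)/\alpha_i'$ is uniformly bounded by Lemma~\ref{lem:Expansions2}, and then simply reads off spatial decay directly from the integral equation: the forcing $\alpha_i'^{-1}\tilde a_i\Phi_i$ contributes $(1+|x|)^{4\sigma+2-n}$ (or $(1+|x|)^{-\sigma}$ when $n=4\sigma+2$, after absorbing a factor $(1+|y|)^\sigma\le \ell_i^\sigma\approx m_i^{2\sigma/(n-2\sigma)}$ to kill the logarithm), while the self-interaction $\int c_iV_i'$ with $|V_i'|\le C$ yields $(1+|x|)^{-\sigma}$. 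If $n\le 5\sigma+2$ this already suffices; if $n>5\sigma+2$ one feeds the improved bound on $V_i'$ back into the self-interaction term and iterates a finite number of times. No limit extraction, no non-degeneracy, no case split on the location of a maximizer.

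Your weighted contradiction scheme trades this direct bootstrap for a single normalization, but the price is the case $|z_i|\to\infty$, which you acknowledge only with ``recentered at $z_i$ if necessary''. That is precisely where the work lies: after recentering, the kernel $c_i(\cdot+z_i)$ vanishes on compacta, so the limit equation is trivial; the contradiction comes not from non-degeneracy but from showing that the \emph{global} self-interaction integral (with $V_i$ possibly large near the origin where $c_i$ is large) is nonetheless $o(1)$ at $z_i$. This requires exactly the decay estimates the paper obtains directly, so in effect you must carry out the paper's bootstrap anyway inside the contradiction argument. The recentered non-degeneracy picture you sketch (``the same linearized equation \dots deliver the same contradiction'') does not apply, since the linearized operator at infinity is not the one in \cite{LXCompactness2019}. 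Your route is viable, but the paper's direct iteration is shorter and sidesteps this entirely.
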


\begin{proof}
Let $\alpha_{i}$ be defined in \eqref{eq:alphai}. We may assume $m_{i}^{-2}/\alpha_{i} \rightarrow 0$ as $i \rightarrow \infty$ for $n \geq 4 \sigma+2$; otherwise there exists a subsequence $i_{l}$ of $\{i\}$ such that $m_{i_{l}}^{-2} \geq C \alpha_{i_{l}}$ for some $C>0$ and the lemma follows from Lemma \ref{lem:Expansions2}. Let
$$
\alpha_{i}^{\prime}=\left\{\begin{array}{ll}
\|\nabla^{2} a_{i}\|_{L^{\infty}(B_{1})} m_{i}^{-2+{2 \sigma}/(n-2 \sigma)} \quad&\text { if }\, n=4 \sigma+2, \\
\|\nabla^{2} a_{i}\|_{L^{\infty}(B_{1})} m_{i}^{-2+{(2n-8\sigma-4)}/(n-2 \sigma)} \quad& \text { if }\, n>4 \sigma+2,
\end{array}\right.
$$
and
$$
V_{i}^{\prime}(x):=\frac{\Phi_{i}(x)-\Theta_{1}(x)}{\alpha_{i}^{\prime}}, \quad|x| \leq m_{i}^{(p_{i}-1)/2 \sigma}.
$$
Since ${m_{i}^{-2}}/{\alpha_{i}} \rightarrow 0$ and $\alpha_{i} \leq C\alpha_{i}^{\prime}$, it follows from Lemma \ref{lem:Expansions2} that $|V^{\prime}_{i}| \leq C$. Since $0<\Phi_{i} \leq C \Theta_{1}$ and $m_{i}^{\tau_{i}}=1+o(1)$, we have, for $\ell_{i}/2 \leq|x| \leq \ell_{i}$,
$$
|V^{\prime}_{i}(x)| \leq\left\{\begin{array}{ll}
C m_{i}^{-{2 \sigma}/(n-2 \sigma)} \quad& \text { if }\, n=4 \sigma+2, \\
C m_{i}^{-(2n-8\sigma-4)/(n-2 \sigma)} \quad& \text { if }\, n>4 \sigma+2,
\end{array}\right.
$$
and thus we only need to prove the proposition when $|x| \leq \ell_{i}/2$. Similar to \eqref{eq:Vieq}, $V^{\prime}_{i}(x)$ satisfies
$$
V^{\prime}_i(x)=\int_{B_{\ell_i}} \frac{c_i(y)V^{\prime}_i(y)+\alpha_{i}^{\prime-1}\tilde{a}_{i}(y) \Phi_{i}(y)-\alpha_{i}^{\prime-1}T_{i}(y)}{|x-y|^{n-2 \sigma}} \,\ud y+\frac{1}{\alpha_{i}^{\prime}}O(m_i^{-2}),
$$
where $c_{i}$, $\tilde{a}_{i}$, and $T_{i}$ are given by \eqref{eq:bi}, \eqref{eq:tildeai}, and \eqref{eq:Ti} respectively.

By Taylor expansion of $a_{i}$ at $x_{i}$, we have
$$
a_{i}(\ell_{i}^{-1} y+x_{i}) \leq a_{i}(x_{i})+\ell_{i}^{-1}|y||\nabla a_{i}(x_{i})|+\ell_{i}^{-2}|y|^{2}\| \nabla^{2} a_{i} \|_{L^{\infty}(B_{1})}.
$$
Since $\Phi_{i}(y) \leq C \Theta_{1}(y)$, by Lemma \ref{lem:Expansions1} and Lemma \ref{lem:Blowup9} we have: for $n=4 \sigma+2$,
$$
\begin{aligned}
\int_{B_{\ell_{i}}} \frac{\alpha_{i}^{\prime-1} \tilde{a}_{i}(y) \Phi_{i}(y)}{|x-y|^{n-2 \sigma}} \,\ud y & \leq C \int_{B_{\ell_{i}}} \frac{1}{|x-y|^{n-2 \sigma}(1+|y|)^{n-2 \sigma-2} m_{i}^{{2 \sigma}/(n-2 \sigma)}} \,\ud y \\
& \leq C \int_{B_{\ell_{i}}} \frac{1}{|x-y|^{n-2 \sigma}(1+|y|)^{n-\sigma-2}} \,\ud y \\
& \leq C(1+|x|)^{-\sigma};
\end{aligned}
$$
for $n>4 \sigma+2$,
$$
\begin{aligned}
\int_{B_{\ell_{i}}} \frac{\alpha_{i}^{\prime-1} \tilde{a}_{i}(y) \Phi_{i}(y)}{|x-y|^{n-2 \sigma}} \,\ud y & \leq C \int_{B_{\ell_{i}}} \frac{1}{|x-y|^{n-2 \sigma}(1+|y|)^{n-2 \sigma-2}} \,\ud y \\
& \leq C(1+|x|)^{4 \sigma+2-n}.
\end{aligned}
$$
Since $c_{i}(y) \leq C(1+|y|)^{-3 \sigma}$ and $|V^{\prime}_{i}(y)| \leq C$, it follows from Lemma \ref{lem:Expansions1} that
$$
\int_{B_{\ell_{i}}} \frac{|c_{i}(y) V^{\prime}_{i}(y)|}{|x-y|^{n-2\sigma}} \,\ud y \leq C(1+|x|)^{-\sigma}.
$$
Noticing that
$$
|T_{i}(y)|\leq C\tau_i|\log \Theta_1(y)|(1+|y|)^{-n-2\sigma},
$$
we have: for $n=4 \sigma+2$,
$$
\begin{aligned}
\int_{B_{\ell_{i}}} \frac{\alpha_{i}^{\prime-1} |T_{i}(y)|}{|x-y|^{n-2 \sigma}} \,\ud y\leq& C\int_{B_{\ell_{i}}}\frac{1}{|x-y|^{n-2 \sigma}(1+|y|)^{n+2\sigma}m_i^{2\sigma/(n-2\sigma)}}\,\ud y\\
\leq&C\int_{B_{\ell_{i}}}\frac{1}{|x-y|^{n-2 \sigma}(1+|y|)^{n+3\sigma}}\,\ud y\\
\leq&C(1+|x|)^{2\sigma-n};
\end{aligned}
$$
for $n>4 \sigma+2$,
$$
\begin{aligned}
\int_{B_{\ell_{i}}} \frac{\alpha_{i}^{\prime-1} |T_{i}(y)|}{|x-y|^{n-2 \sigma}} \,\ud y\leq& C\int_{B_{\ell_{i}}}\frac{1}{|x-y|^{n-2 \sigma}(1+|y|)^{n+2\sigma}}\,\ud y\\
\leq&C(1+|x|)^{2\sigma-n}.
\end{aligned}
$$
Thus
$$
|V^{\prime}_{i}(x)| \leq \left\{\begin{array}{ll}
C(1+|x|)^{-\sigma} \quad &\text { for }\, n=4\sigma+2, \\
C((1+|x|)^{-\sigma}+(1+|x|)^{4 \sigma+2-n}) \quad &\text { for }\, n >4\sigma+2.
\end{array}\right.
$$
If $4\sigma+2\leq n\leq 5\sigma+2$, the conclusion follows immediately from multiplying both sides of the above inequalities by $\alpha_{i}^{\prime}$. If $n > 5\sigma+2$, the above estimate gives $|V^{\prime}_{i}(x)| \leq C(1+|x|)^{-\sigma}$. Plugging this estimate to the term $\int_{B_{\ell_{i}}} c_{i}(y) V^{\prime}_{i}(y)|x-y|^{2\sigma-n} \,\ud y$ yields $|V^{\prime}_{i}(x)| \leq C(1+|x|)^{4\sigma+2-n}$ as long as $n \leq 6\sigma+2$. Repeating this process, we complete the proof.
\end{proof}

\begin{corollary}\label{cor:Expansions4}
Assume as Lemma \ref{lem:Expansions2}. We have
$$
\begin{aligned}
&|\nabla^{k}(\Phi_{i}(x)-\Theta_{1}(x))| \leq C(1+|x|)^{-k} \\
&\times\left\{\begin{array}{ll}
m_{i}^{-2} & \text { if }\, 4 \sigma \leq n<4 \sigma+2 \\
\max \{\|\nabla^{2} a_{i}\|_{L^{\infty}(B_{1})} m_{i}^{-2+{2 \sigma}/(n-2 \sigma)}(1+|x|)^{-\sigma}, m_{i}^{-2}\} & \text { if }\, n=4 \sigma+2, \\
\max \{\|\nabla^{2} a_{i}\|_{L^{\infty}(B_{1})} m_{i}^{-2+(2n-8\sigma-4)/(n-2 \sigma)}(1+|x|)^{4 \sigma+2-n}, m_{i}^{-2}\} & \text { if }\, n>4 \sigma+2,
\end{array}\right.
\end{aligned}
$$
for $k=0,1$, where $C>0$ depends only on $n, \sigma, A_{0}$ and $A_{1}$.
\end{corollary}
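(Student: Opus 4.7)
The case $k=0$ is exactly the content of Lemma \ref{lem:Expansions3} (for $n\geq 4\sigma+2$) combined with Lemma \ref{lem:Expansions2} (for $4\sigma\leq n<4\sigma+2$), so only the $k=1$ gradient bound requires work: I would need to gain one extra power of $(1+|x|)^{-1}$ decay relative to the pointwise bound. My plan is to combine the $L^\infty$ bound just recalled with a rescaling plus interior Schauder estimate based on Proposition \ref{pro:JLXSchauder}. Writing $V_i:=\Phi_i-\Theta_1$ and subtracting \eqref{eq:Theta1eq} from \eqref{eq:Phieq} yields
\[
V_i(x)=\int_{B_{\ell_i}}\frac{c_i(y)V_i(y)+\tilde{a}_i(y)\Phi_i(y)-T_i(y)}{|x-y|^{n-2\sigma}}\,\ud y+(\bar{b}_i-\bar{\theta}_i)(x).
\]

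For any fixed $x_0$ with $|x_0|\leq \ell_i/2$, I would set $r:=(1+|x_0|)/8$ and introduce the rescaled function $W_i(z):=V_i(x_0+rz)$ on $z\in B_1$. A change of variables rewrites the equation as
\[
W_i(z)=\int_{B_1}\frac{r^{2\sigma}c_i(x_0+rw)\,W_i(w)}{|z-w|^{n-2\sigma}}\,\ud w+H_i(z),
\]
where $H_i$ collects the integral over $B_{\ell_i}\setminus B_r(x_0)$, the rescaled source $\tilde{a}_i\Phi_i-T_i$, and the error $\bar{b}_i-\bar{\theta}_i$. By \eqref{eq:|bi|} the rescaled potential $r^{2\sigma}c_i(x_0+r\,\cdot\,)$ is uniformly bounded. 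Assuming $H_i$ obeys the Harnack-type hypothesis \eqref{eq:Harnackhi} on $B_1$ uniformly in $i$, Proposition \ref{pro:JLXSchauder} gives $\|W_i\|_{C^2(B_{1/2})}\leq C\|W_i\|_{L^\infty(B_1)}$, and unscaling yields
\[
|\nabla V_i(x_0)|\leq \frac{C}{r}\sup_{y\in B_r(x_0)}|V_i(y)|.
\]
Since $|y|\asymp |x_0|$ on $B_r(x_0)$, Lemma \ref{lem:Expansions3} (or Lemma \ref{lem:Expansions2} in the low-dimensional range) bounds $\sup_{B_r(x_0)}|V_i|$ by the quantity appearing on the right-hand side of the corollary without the $(1+|x_0|)^{-1}$ factor; multiplying by $r^{-1}\asymp(1+|x_0|)^{-1}$ would then close the case $|x_0|\leq\ell_i/2$.

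For the outer range $\ell_i/2\leq|x_0|\leq\ell_i$ the rescaling trick is unnecessary: $\nabla\Theta_1$ can be computed explicitly and decays like $(1+|x|)^{-(n-2\sigma)-1}$, while $\nabla\Phi_i$ is controlled by applying Proposition \ref{pro:JLXSchauder} directly to \eqref{eq:Phieq} after a standard unit-scale rescaling centered at $x_0$; both contributions are dominated by the right-hand side of the corollary in this regime. The main (but routine) obstacle I expect is verifying that $H_i$ satisfies \eqref{eq:Harnackhi} uniformly in $i$. This amounts to estimating separately (a) the integral over $B_{\ell_i}\setminus B_r(x_0)$, whose kernel is smooth on $B_1$ because of the distance from the rescaled center; (b) the rescaled source $\tilde a_i\Phi_i-T_i$, whose pointwise bounds and those on its derivatives were already used in Lemma \ref{lem:Expansions3}; and (c) the remainder $\bar b_i-\bar\theta_i$, which is $O(m_i^{-2})$ together with its derivatives, as in the proof of Lemma \ref{lem:Expansions2}. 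Beyond this bookkeeping, no new ideas should be needed.
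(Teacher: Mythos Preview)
Your approach is correct in spirit but takes a more circuitous route than the paper. The paper dispatches the corollary in one sentence: it simply differentiates the integral equation for $V_i=\Phi_i-\Theta_1$ directly. Since $\nabla_x|x-y|^{2\sigma-n}=(2\sigma-n)(x-y)|x-y|^{2\sigma-n-2}$ has magnitude comparable to $|x-y|^{(2\sigma-1)-n}$, one may rerun the very same term-by-term estimates from the proofs of Lemmas~\ref{lem:Expansions2}--\ref{lem:Expansions3}, now invoking Lemma~\ref{lem:Expansions1} with $\alpha=2\sigma-1$ instead of $\alpha=2\sigma$; this automatically produces the extra factor $(1+|x|)^{-1}$. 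The already-known $k=0$ bound on $V_i$ feeds into the $c_iV_i$ term, so no new bootstrapping is needed. The remainder $\bar b_i-\bar\theta_i$ is differentiated exactly as in \eqref{eq:binablabi}.

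Your rescaling-plus-Schauder strategy also works, but note a small friction point you did not flag: Proposition~\ref{pro:JLXSchauder}, as stated, inherits from Proposition~\ref{pro:JLXHarnack} the hypotheses $u\geq 0$ and $h\geq 0$, neither of which holds for $W_i$ or for your $H_i$. The underlying Schauder-type estimate is of course valid without sign conditions, but to be rigorous you would need to cite or prove a sign-free version rather than Proposition~\ref{pro:JLXSchauder} verbatim. The paper's direct differentiation sidesteps this entirely and avoids all the bookkeeping you anticipate for verifying \eqref{eq:Harnackhi} on $H_i$.
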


\begin{proof}
Considering the integral equation of $V^{\prime\prime}_{i}=\Phi_{i}-\Theta_{1}$, the conclusion follows immediately. Indeed, we can differentiate the integral equation for $V^{\prime\prime}_{i}$ directly.
\end{proof}

\section{Blow up local solutions of nonlinear integral equations}
\begin{proposition}\label{pro:Estimates1}
Assume as Lemma \ref{lem:Blowup3}. Assume further that $\|a_{i}\|_{C^{4}(B_{1})} \leq A_{0} .$ Then for $0<r<\rho$ there holds
$$
\begin{aligned}
&m_{i}^{2} \mathcal{P}_{\sigma}(x_i, r, u_i, p_i)=-m_{i}^{2} \mathcal{Q}_{\sigma}(x_{i}, r, u_{i}, p_{i}) \geq-C_{0} r^{-n} m_{i}^{-\frac{4 \sigma}{n-2 \sigma}}-C_{0}\|a_{i}\|_{L^{\infty}(B_{1})} r^{4 \sigma-n}\\
&+\frac{1}{C_{1}}\left\{\begin{array}{ll}
\sigma a_{i}(x_{i}) \ln (r m_{i}^{\frac{2}{n-2 \sigma}}) & n=4 \sigma, \\
\sigma a_{i}(x_{i}) m_{i}^{\frac{2(n-4 \sigma)}{n-2 \sigma}} & 4 \sigma<n<4 \sigma+2, \\
\sigma a_{i}(x_{i}) m_{i}^{\frac{2(n-4 \sigma)}{n-2 \sigma}}+\frac{\sigma+1}{2 n} \Delta a_{i}(x_{i}) \ln (r m_{i}^{\frac{2}{n-2 \sigma}}) & n=4 \sigma+2, \\
\sigma a_{i}(x_{i}) m_{i}^{\frac{2(n-4 \sigma)}{n-2 \sigma}}+\frac{\sigma+1}{2 n} \Delta a_{i}(x_{i}) m_{i}^{\frac{2(n-4 \sigma-2)}{n-2 \sigma}} & 4 \sigma+2<n<4 \sigma+4, \\
\beta_{i}-C_{0}\|a_{i}\|_{B_{1}} \ln (r m_{i}^{\frac{2}{n-2 \sigma}})& n=4\sigma+4,\\
\beta_i-C_{0}\|a_{i}\|_{B_{1}}m_{i}^{\frac{2(n-4 \sigma-4)}{n-2 \sigma}}& n>4\sigma+4,
\end{array}\right.
\end{aligned}
$$
where $\beta_{i}:=\sigma a_{i}(x_{i}) m_{i}^{\frac{2(n-4 \sigma)}{n-2 \sigma}}+\frac{\sigma+1}{2 n} \Delta a_{i}(x_{i}) m_{i}^{\frac{2(n-4 \sigma-2)}{n-2 \sigma}}$, $\|a_{i}\|_{B_{1}}:=\|a_{i}\|_{L^{\infty}(B_{1})} \times\|\nabla^{2} a_{i}\|_{L^{\infty}(B_{1})}+\|\nabla^{4} a_{i}\|_{L^{\infty}(B_{1})}$, $C_{0}>0$ depends only on $n, \sigma, A_{0}, A_{1}, \rho$ and independent of $r$ if $i$ is sufficiently large, and $C_{1}>0$ depends only on $n$ and $\sigma$.
\end{proposition}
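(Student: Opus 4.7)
The plan is to apply the Pohozaev identity of Proposition \ref{pro:Pohozaev} to the localised equation \eqref{eq:B4-2} on $B_r(x_i)$ with $h$ taken to be $b_i$, recentred at $x_i$. Since this yields $\mathcal{P}_\sigma(x_i,r,u_i,p_i)=-\mathcal{Q}_\sigma(x_i,r,u_i,p_i)$, the task reduces to producing a lower bound for $-m_i^2\mathcal{Q}_\sigma$. I would decompose
$$
-\mathcal{Q}_\sigma = -\Bigl(\tfrac{n-2\sigma}{2}-\tfrac{n}{p_i+1}\Bigr)\!\!\int_{B_r(x_i)}\!\! u_i^{p_i+1} + \sigma\!\!\int_{B_r(x_i)}\!\! a_iu_i^2 + \tfrac12\!\!\int_{B_r(x_i)}\!\!(x-x_i)\cdot\nabla a_i\,u_i^2 - \tfrac{r}{2}\!\!\int_{\partial B_r(x_i)}\!\! a_iu_i^2 - \tfrac{r}{p_i+1}\!\!\int_{\partial B_r(x_i)}\!\! u_i^{p_i+1},
$$
and treat the five pieces separately. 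The first term is $O(\tau_i)$, with $\tau_i$ controlled by the refined estimate proved in Lemma \ref{lem:Blowup9}. The two boundary integrals are estimated via the sharp decay $u_i(x)\le Cm_i^{-1}|x-x_i|^{2\sigma-n}$ of Proposition \ref{pro:Blowup6}: after multiplying by $m_i^2$ they produce exactly the negative error terms $-C_0r^{-n}m_i^{-4\sigma/(n-2\sigma)}$ and $-C_0\|a_i\|_{L^\infty}r^{4\sigma-n}$.

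Next, I would extract the leading part of $\sigma\int a_iu_i^2 + \tfrac12\int(x-x_i)\cdot\nabla a_i\,u_i^2$ by a Taylor expansion of $a_i$ around $x_i$,
$$
a_i(x)=a_i(x_i)+(x-x_i)\cdot\nabla a_i(x_i)+\tfrac12(x-x_i)^T\nabla^2 a_i(x_i)(x-x_i)+\cdots
$$
The odd-order terms vanish to principal order because of the approximate radial symmetry of $\Phi_i\approx\Theta_1$ about $x_i$, with the symmetry defect controlled via Corollary \ref{cor:Expansions4}; the quadratic terms combine, after diagonalising via the rotational symmetry of $\Theta_1$, into
$$
\sigma a_i(x_i)\int u_i^2 + \frac{\sigma+1}{2n}\Delta a_i(x_i)\int|x-x_i|^2 u_i^2,
$$
exactly the positive main part sought. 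The two moments are then computed via the change of variable $z=m_i^{(p_i-1)/2\sigma}(x-x_i)$ and the substitution $\Phi_i=\Theta_1+(\Phi_i-\Theta_1)$ of Lemma \ref{lem:Expansions2}, yielding $m_i^2\int u_i^2\sim m_i^{2(n-4\sigma)/(n-2\sigma)}\int_{\mathbb{R}^n}\Theta_1^2$ (with a logarithm $\ln(rm_i^{2/(n-2\sigma)})$ at the borderline $n=4\sigma$) and, analogously, $m_i^2\int|x-x_i|^2u_i^2\sim m_i^{2(n-4\sigma-2)/(n-2\sigma)}\int|z|^2\Theta_1^2$ (logarithmic at $n=4\sigma+2$). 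These match the coefficients in each of the first four cases of the statement.

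The main obstacle is the dimension-by-dimension bookkeeping of error terms. For $n<4\sigma+2$ the moment $\int|x-x_i|^2 u_i^2$ is dominated by $a_i(x_i)\int u_i^2$ and is absorbed; for $4\sigma+2<n<4\sigma+4$ both quadratic moments are genuinely main terms and no further expansion is required. When $n\ge 4\sigma+4$ the quartic moment $\int|x-x_i|^4u_i^2$ becomes the relevant quantity, so I must push the Taylor expansion of $a_i$ to fourth order: the cubic terms still vanish to leading order by the near-radial symmetry of $\Phi_i$ (modulo Corollary \ref{cor:Expansions4} defects), while the quartic term supplies the $\|\nabla^4 a_i\|_{L^\infty}$ contribution to the error $\|a_i\|_{B_1}m_i^{2(n-4\sigma-4)/(n-2\sigma)}$, with a logarithmic threshold at $n=4\sigma+4$. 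The cross terms $a_i(x_i)(\Phi_i-\Theta_1)$ produced by Lemma \ref{lem:Expansions2} contribute the mixed factor $\|a_i\|_{L^\infty}\|\nabla^2 a_i\|_{L^\infty}$ that appears inside $\|a_i\|_{B_1}$. Verifying in each of the seven regimes that all these remainders are dominated by the claimed bound is the only delicate point; the underlying computations reduce to elementary integrals of $\Theta_1^2$ and its weighted moments.
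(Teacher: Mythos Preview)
Your proposal is correct and follows essentially the same approach as the paper: apply the Pohozaev identity, bound the boundary terms via Proposition~\ref{pro:Blowup6}, replace $\Phi_i$ by $\Theta_1$ using Corollary~\ref{cor:Expansions4}, Taylor-expand $a_i$ to fourth order, and evaluate the radial moments of $\Theta_1^2$. One small simplification you are missing: the term $-\bigl(\tfrac{n-2\sigma}{2}-\tfrac{n}{p_i+1}\bigr)\int u_i^{p_i+1}$ is \emph{nonnegative} (since $p_i\le\tfrac{n+2\sigma}{n-2\sigma}$), so for the lower bound you simply drop it rather than invoking the $\tau_i$ estimate of Lemma~\ref{lem:Blowup9}; the paper does exactly this, lumping it with the boundary $u_i^{p_i+1}$ term into $\mathcal{N}(r,u_i)\ge -Cr^{-n}m_i^{1-p_i}$. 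A second minor organizational difference: the paper first rewrites the bulk $a_i$-terms as $-\int_{B_r(x_i)}\bigl((x-x_i)\cdot\nabla u_i+\tfrac{n-2\sigma}{2}u_i\bigr)a_iu_i$, performs the replacement $\Phi_i\to\Theta_1$ on this expression (so the error estimate via Corollary~\ref{cor:Expansions4} is a single line), and only then integrates by parts back and Taylor-expands $a_i$. This avoids having to argue separately that odd Taylor coefficients against $\Phi_i^2$ are small, since against $\Theta_1^2$ they vanish exactly by radial symmetry.
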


\begin{proof}
By Proposition \ref{pro:Pohozaev}, we have
$$
\begin{aligned}
\mathcal{P}_{\sigma}(x_i, r, u_i, p_i)=&-\mathcal{Q}_{\sigma}(x_{i}, r, u_{i}, p_{i})\\
=&-\int_{B_{r}(x_{i})}((x-x_{i})_{k} \partial_{k} u_{i}+\frac{n-2 \sigma}{2} u_{i}) a_{i} u_{i} \,\ud x+\mathcal{N}(r, u_{i}),
\end{aligned}
$$
where
$$
\mathcal{N}(r, u_{i})=\frac{(n-2 \sigma) \tau_{i}}{2(p_{i}+1)} \int_{B_{r}(x_{i})} u_{i}^{p_{i}+1} \,\ud x-\frac{r}{p_{i}+1} \int_{\partial B_{r}(x_{i})} u_{i}^{p_{i}+1} \,\ud s.
$$
By Proposition \ref{pro:Blowup6},
$$
m_{i}^{2} \mathcal{N}(r, u_{i}) \geq-C r^{-n} m_{i}^{1-p_{i}}.
$$
By change of variables $y=\ell_{i}(x-x_{i})$ with $\ell_{i}=m_{i}^{(p_{i}-1)/2 \sigma}$, we have
$$
\begin{aligned}
\mathcal{E}_{i}(r): &=-m_{i}^{2} \int_{B_{r}(x_{i})}((x-x_{i})_{k} \partial_{k} u_{i}+\frac{n-2 \sigma}{2} u_{i}) a_{i} u_{i} \,\ud x\\
&=-m_{i}^{4-{n(p_{i}-1)}/2 \sigma} \int_{B_{\ell_{i} r}}(y_{k} \partial_{k} \Phi_{i}+\frac{n-2 \sigma}{2} \Phi_{i}) a_{i}(\ell_{i}^{-1} y+x_{i}) \Phi_{i} \,\ud y,
\end{aligned}
$$
where $\Phi_{i}(y)=m_{i}^{-1} u_{i}(\ell_{i}^{-1} y+x_{i}) .$

Let
$$
\hat{\mathcal{E}}_{i}(r):=-m_{i}^{4-{n(p_{i}-1)}/2 \sigma} \int_{B_{\ell_{i} r}}(y_{k} \partial_{k} \Theta_{1}+\frac{n-2 \sigma}{2} \Theta_{1}) a_{i}(\ell_{i}^{-1} y+x_{i}) \Theta_{1} \,\ud y.
$$
Making use of Corollary \ref{cor:Expansions4} and Lemma \ref{lem:Blowup4}, we have
$$
\begin{aligned}
&|\mathcal{E}_{i}(r)-\hat{\mathcal{E}}_{i}(r)| \\
\leq& C\|a_{i}\|_{L^{\infty}(B_{1})} m_{i}^{2-{4 \sigma}/(n-2 \sigma)} \int_{B_{\ell_{i} r}} \sum_{j=0}^{1}|\nabla^{j}(\Phi_{i}-\Theta_{1})|(1+|y|)^{2 \sigma-n+j} \,\ud y\\
\leq& C\|a_{i}\|_{L^{\infty}(B_{1})}\left\{\begin{array}{ll}
r^{2 \sigma} \quad& \text { if }\, n<4 \sigma+2, \\
\max \{r^{2 \sigma}, \|\nabla^{2} a_{i}\|_{L^{\infty}(B_{1})} r^{\sigma}\} \quad& \text { if }\, n=4 \sigma+2, \\
\max \{r^{2 \sigma}, \|\nabla^{2} a_{i}\|_{L^{\infty}(B_{1})} r^{6 \sigma+2-n}\} \quad& \text { if }\, 4 \sigma+2<n<6 \sigma+2, \\
\max \{r^{2 \sigma}, \|\nabla^{2} a_{i}\|_{L^{\infty}(B_{1})} \ln (r m_{i}^{{2}/(n-2 \sigma)})\} \quad& \text { if }\, n=6 \sigma+2, \\
\max \{r^{2 \sigma}, \|\nabla^{2} a_{i}\|_{L^{\infty}(B_{1})} m_{i}^{{2(n-6 \sigma-2)}/(n-2 \sigma)}\} \quad& \text { if }\, n>6 \sigma+2,
\end{array}\right.
\end{aligned}
$$
where $C>0$ depends only on $n, \sigma, A_{0}$ and $A_{1}$. Next, by the divergence theorem and direction computations we see that
$$
\begin{aligned}
\hat{\mathcal{E}}_{i}(r) =&-m_{i}^{2} \int_{B_{r}}(z_{k} \partial_{k} \Theta_{\ell_{i}}+\frac{n-2 \sigma}{2} \Theta_{\ell_{i}}) a_{i}(z+x_{i}) \Theta_{\ell_{i}} \,\ud z \\
\geq& m_{i}^{2} \int_{B_{r}}(\frac{1}{2} z_{k} \partial_{k} a_{i}(z+x_{i})+\sigma a_{i}(z+x_{i})) \Theta_{\ell_{i}}^{2} \,\ud z-C\|a_{i}\|_{L^{\infty}(B_{1})} r^{4 \sigma-n}\\
\geq& m_{i}^{2} \int_{B_{r}}(\sigma a_{i}(x_{i})+(\sigma+\frac{1}{2}) z_{k} \partial_{k} a_{i}(x_{i})+(\frac{1}{2}+\frac{\sigma}{2}) \partial_{k l} a_{i}(x_{i}) z_{k} z_{l}\\
&+(\frac{1}{4}+\frac{\sigma}{6}) \partial_{j k l} a_{i}(x_{i}) z_{j} z_{k} z_{l}) \Theta_{\ell_{i}}^{2} \,\ud z\\
&-C m_{i}^{2}\|\nabla^{4} a_{i}\|_{L^{\infty}(B_{1})} \int_{B_{r}}|z|^{4} \Theta_{\ell_{i}}^{2} \,\ud z-C\|a_{i}\|_{L^{\infty}(B_{1})} r^{4 \sigma-n}\\
=&m_{i}^{2} \int_{B_{r}}(\sigma a_{i}(x_{i})+\frac{\sigma+1}{2 n} \Delta a_{i}(x_{i})|z|^{2}) \Theta_{\ell_{i}}^{2} \,\ud z\\
&-C m_{i}^{2}\|\nabla^{4} a_{i}\|_{L^{\infty}(B_{1})} \int_{B_{r}}|z|^{4} \Theta_{\ell_{i}}^{2} \,\ud z-C\|a_{i}\|_{L^{\infty}(B_{1})} r^{4 \sigma-n},
\end{aligned}
$$
$$
\begin{aligned}
&m_{i}^{2} \int_{B_{r}}(\sigma a_{i}(x_{i})+\frac{\sigma+1}{2 n} \Delta a_{i}(x_{i})|z|^{2}) \Theta_{\ell_{i}}^{2} \,\ud z\\
\geq&C\left\{\begin{array}{ll}
\sigma a_{i}(x_{i}) \ln (r m_{i}^{\frac{2}{n-2 \sigma}}) \quad&\, n=4 \sigma, \\
\sigma a_{i}(x_{i}) m_{i}^{\frac{2(n-4 \sigma)}{n-2 \sigma}} \quad&\, 4 \sigma<n<4 \sigma+2, \\
\sigma a_{i}(x_{i}) m_{i}^{\frac{2(n-4 \sigma)}{n-2 \sigma}}+\frac{\sigma+1}{2 n} \Delta a_{i}(x_{i}) \ln (r m_{i}^{\frac{2}{n-2 \sigma}}) \quad&\, n=4 \sigma+2, \\
\sigma a_{i}(x_{i}) m_{i}^{\frac{2(n-4 \sigma)}{n-2 \sigma}}+\frac{\sigma+1}{2 n} \Delta a_{i}(x_{i}) m_{i}^{\frac{2(n-4 \sigma-2)}{n-2 \sigma}}\quad &\, n>4 \sigma+2,
\end{array}\right.
\end{aligned}
$$
and
$$
m_{i}^{2} \int_{B_{r}}|z|^{4} \Theta_{\ell_{i}}^{2} \,\ud z \leq C\left\{\begin{array}{ll}
r^{4 \sigma+4-n} \quad&\, n<4 \sigma+4, \\
\ln (r m_{i}^{\frac{2}{n-2 \sigma}}) \quad&\, n=4 \sigma+4, \\
m_{i}^{\frac{2(n-4 \sigma-4)}{n-2 \sigma}} \quad&\, n>4 \sigma+4,
\end{array}\right.
$$
where $C>0$ depends only on $n, \sigma$ and $\sup _{i}\|\nabla^{4} a_{i}\|_{L^{\infty}(B_{1})} .$ Since $4 \sigma+4<6 \sigma+2$ and
$$
-m_{i}^{2}\mathcal{Q}_{\sigma}(x_{i}, r, u_{i}, p_{i}) \geq \hat{\mathcal{E}}_{i}(r)-|\mathcal{E}_{i}(r)-\hat{\mathcal{E}}_{i}(r)|+m_{i}^{2} \mathcal{N}(r, u_{i}),
$$
the proposition follows immediately.
\end{proof}

\begin{proposition}\label{pro:Estimates2}
Assume as Lemma \ref{lem:Harnack}. Suppose that for large $i$,
\begin{enumerate}[(i)]
  \item $\beta_{i} \geq 0$ if $4 \sigma+2 \leq n<4 \sigma+4$;
  \item $\beta_{i} \geq(C_{0}+1)\|a_{i}\|_{B_{1}} \ln m_{i}$ if $n=4\sigma+4$;
  \item $\beta_{i} \geq(C_{0}+1)\|a_{i}\|_{B_{1}} m_{i}^{\frac{2(n-4 \sigma-4)}{n-2 \sigma}}$ if $n>4\sigma+4$;
\end{enumerate}
where
$$
\beta_{i}:=\begin{cases}
\sigma a_{i}(x_{i}) m_{i}^{\frac{2(n-4 \sigma)}{n-2 \sigma}}+\frac{\sigma+1}{2 n} \Delta a_{i}(x_{i}) \ln m_{i}\, & \text { if }\, n=4 \sigma+2, \\ \sigma a_{i}(x_{i}) m_{i}^{\frac{2(n-4 \sigma)}{n-2 \sigma}}+\frac{\sigma+1}{2 n} \Delta a_{i}(x_{i}) m_{i}^{\frac{2(n-4 \sigma-2)}{n-2 \sigma}}\, & \text { if }\, n>4 \sigma+2,
\end{cases}
$$
$m_{i}=u_{i}(x_{i})$, and $C_{0}$ is the constant in Proposition \ref{pro:Estimates1} with $\rho=1$, then, after passing to a subsequence, $x_{i} \rightarrow 0$ is an isolated simple blow up point of $\{u_{i}\}$.
\end{proposition}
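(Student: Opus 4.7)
The proof is by contradiction, combining a rescaling at the scale of the would-be second critical point of $\bar w_i$ with the Pohozaev-type lower bound of Proposition~\ref{pro:Estimates1}. Suppose, after passing to a subsequence, that $x_i \to 0$ is isolated but \emph{not} isolated simple; then for every $\rho > 0$, $\bar w_i$ has at least two critical points in $(0,\rho)$ for all large $i$. By Proposition~\ref{pro:Blowup1} the first critical point sits at the bubble scale $O(r_i) = O(R_i m_i^{-(p_i-1)/2\sigma})$; let $\mu_i$ be the smallest critical point of $\bar w_i$ strictly above $2 r_i / \sqrt{\bar c}$. Then $\mu_i \to 0$ and $\mu_i/r_i \to \infty$. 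Rescale via
\[
v_i(y) := \mu_i^{2\sigma/(p_i-1)}\, u_i(\mu_i y + x_i).
\]
Then $v_i$ solves an integral equation of the form \eqref{eq:intui=} with potential $\tilde a_i(y) = \mu_i^{2\sigma} a_i(\mu_i y + x_i)$ and new nonnegative remainder $\tilde h_i$ (absorbing the rescaled $h_i$ together with the integral from $\{|z|>1\}$), still satisfying \eqref{eq:Harnackhi}. The origin is an isolated blow up point of $v_i$ with $\tilde m_i := v_i(0) = \mu_i^{2\sigma/(p_i-1)} m_i \to \infty$ (because $\mu_i \gg r_i$), and by the minimal choice of $\mu_i$ it is actually \emph{isolated simple} for $v_i$ with constant $\rho' = 1/2$: the only critical point of $\bar w_{v_i}$ in $(0,1/2)$ is the bubble one at scale $O(r_i/\mu_i) \to 0$.

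Apply Proposition~\ref{pro:Estimates1} to $v_i$ with $\rho' = 1/2$ and $r = 1/4$. Under the rescaling one has $\tilde a_i(0) = \mu_i^{2\sigma} a_i(x_i)$, $\Delta \tilde a_i(0) = \mu_i^{2\sigma+2} \Delta a_i(x_i)$, $\tilde m_i = \mu_i^{(n-2\sigma)/2 + o(1)} m_i$ and $\|\tilde a_i\|_{B_1} \le C\mu_i^{2\sigma+4}$, so a case-by-case computation shows the rescaled quantity $\tilde \beta_i$ equals $\mu_i^{n-2\sigma}\,\beta_i$ up to a lower-order logarithmic correction in the borderline dimensions, and the hypotheses (i)--(iii) of Proposition~\ref{pro:Estimates2} on $\{a_i, \beta_i\}$ translate to the corresponding hypotheses on $\{\tilde a_i, \tilde \beta_i\}$. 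Consequently, for all large $i$,
\begin{equation}
\label{eq:lbvi}
\tilde m_i^{\,2}\, \mathcal{P}_\sigma\bigl(0, \tfrac14, v_i, p_i\bigr)\ \geq\ c_1\,\mu_i^{n-2\sigma}\,\beta_i,
\end{equation}
with $c_1 > 0$ independent of $i$. Moreover $\mu_i^{n-2\sigma} \geq r_i^{n-2\sigma} \geq c\,R_i^{n-2\sigma}\, m_i^{-2}$, and in each of the three regimes (i)--(iii) the hypothesis on $\beta_i$ combined with this estimate forces the right-hand side of \eqref{eq:lbvi} to stay bounded away from $0$ provided $R_i \to \infty$ is chosen sufficiently slowly.

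On the other hand, Proposition~\ref{pro:Blowup6} applied to $v_i$ gives $v_i(y) \le C\tilde m_i^{-1}|y|^{2\sigma-n}$ on $B_{1/2}$. Setting $G_i := \tilde m_i v_i$, this bound together with Proposition~\ref{pro:JLXSchauder} renders $\{G_i\}$ precompact in $C^2_{loc}(B_{1/2}\setminus\{0\})$. Using $\tilde a_i \to 0$ (since $\mu_i \to 0$) and the bubble-mass concentration $\tilde m_i\, v_i^{p_i} \rightharpoonup c_*\,\delta_0$, with $c_* = \int_{\mathbb{R}^n} \Theta_1(z)^{(n+2\sigma)/(n-2\sigma)}\,\ud z > 0$, we conclude along a subsequence $G_i \to G$ with
\[
G(y) = c_*\,|y|^{2\sigma-n} + H(y), \qquad y \in B_{1/2}\setminus\{0\},
\]
where $H$ is smooth and nonnegative on $B_{1/2}$ (nonnegativity inherited from $\tilde h_i \ge 0$). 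Multiplying the Pohozaev identity of Proposition~\ref{pro:Pohozaev} for $v_i$ on $B_{1/4}$ by $\tilde m_i^{\,2}$ and passing to the limit, the $\tilde a_i v_i$-term vanishes and the concentration of $v_i^{p_i}$ yields
\[
\lim_{i\to\infty}\ \tilde m_i^{\,2}\, \mathcal{P}_\sigma\bigl(0, \tfrac14, v_i, p_i\bigr)\ =\ -\tfrac{n-2\sigma}{2}\, c_*\, H(0)\ \le\ 0,
\]
which contradicts \eqref{eq:lbvi}. The main obstacle is this last step: identifying the limit $G$ precisely (in particular the nonnegativity of the regular part $H$ and the exact coefficient $c_*$) and justifying term-by-term convergence in $\mathcal{P}_\sigma$ by combining dominated convergence on annuli with the bubble concentration at the origin. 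Subsidiary but delicate is the case-by-case scaling verification of \eqref{eq:lbvi} in the dimensional regimes $4\sigma+2 \le n < 4\sigma+4$, $n = 4\sigma+4$, and $n > 4\sigma+4$, where the logarithmic corrections in the borderline cases must be carefully tracked.
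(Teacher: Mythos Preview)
Your overall strategy matches the paper's: rescale at the second critical point $\mu_i$, obtain an isolated simple blow up for the rescaled sequence $v_i$, then contrast the Pohozaev lower bound from Proposition~\ref{pro:Estimates1} with the limit computed from the asymptotic profile of $\tilde m_i v_i$. However, there is a genuine gap in how you close the contradiction.

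You assert that the right-hand side of \eqref{eq:lbvi} ``stays bounded away from $0$''. This is not justified and in fact need not hold. In regime (i) the hypothesis is only $\beta_i\ge 0$, so $c_1\mu_i^{n-2\sigma}\beta_i$ can vanish identically; in regimes (ii)--(iii) the hypothesis is $\beta_i\ge (C_0+1)\|a_i\|_{B_1}[\cdots]$, but $\|a_i\|_{B_1}$ may also vanish. What Proposition~\ref{pro:Estimates1} actually yields, after checking that the rescaled negative error terms tend to $0$, is only
\[
\liminf_{i\to\infty}\ \tilde m_i^{\,2}\,\mathcal P_\sigma\bigl(0,\tfrac14,v_i,p_i\bigr)\ \ge\ 0,
\]
not a strictly positive lower bound. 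On the other side you only obtain $H(0)\ge 0$, hence $\limsup \tilde m_i^{\,2}\mathcal P_\sigma\le 0$. These two inequalities are compatible; there is no contradiction.

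The missing ingredient is the one piece of information encoded in the definition of $\mu_i$ that you have not used: $\mu_i$ is a \emph{critical point} of $\bar w_i$, i.e.\ after rescaling
\[
\frac{\ud}{\ud r}\Bigl\{r^{2\sigma/(p_i-1)}\bar v_i(r)\Bigr\}\Big|_{r=1}=0.
\]
Passing to the limit in this identity for $G=\lim \tilde m_i v_i = c_*|y|^{2\sigma-n}+H$ forces $H(0)=c_*>0$ (the derivative of $r^{(n-2\sigma)/2}(c_* r^{2\sigma-n}+H(0))$ at $r=1$ vanishes exactly when $H(0)=c_*$). This is precisely how the paper obtains $\phi_i(0)\phi_i(x)\to b|x|^{2\sigma-n}+b$ with the \emph{same} strictly positive constant $b$ in both the singular and the regular part. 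With $H(0)>0$ the Pohozaev limit becomes strictly negative, and the contradiction with $\liminf\ge 0$ goes through. Two minor points: the claim $\mu_i/r_i\to\infty$ is neither needed nor obviously true (only $\mu_i\ge r_i$ and $\mu_i\to 0$ are used), and after rescaling the natural constant for isolated simplicity is $\rho'=1$, since $r=1$ is the next critical point.
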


\begin{proof}
By Proposition \ref{pro:Blowup1}, $r^{2 \sigma /(p_{i}-1)} \bar{u}_{i}(r)$ has precisely one critical point in the interval $0<r<r_{i}:=R_{i} u_{i}(x_{i})^{-(p_{i}-1)/2 \sigma}$, where $\bar{u}_{i}(r)=|\partial B_r(x_i)|^{-1}\int_{\partial B_r(x_i)} u_i$. If the proposition were wrong, let $\mu_{i} \geq r_{i}$ be the second critical point of $r^{2 \sigma /(p_{i}-1)} \bar{u}_{i}(r)$. Then there must hold
$$
\lim _{i \rightarrow \infty} \mu_{i}=0.
$$
Without loss of generality, we assume that $x_{i}=0 .$ Define
$$
\phi_{i}(x):=\mu_{i}^{2 \sigma /(p_{i}-1)} u_{i}(\mu_{i} x), \quad x \in \mathbb{R}^{n}.
$$
Clearly, $\phi_{i}$ satisfies
$$
\phi_i(x)=\int_{{B_{3/\mu_i}}} \frac{\tilde{a}_{i}(y) \phi_i(y)+\phi_i(y)^{p_i}}{| x- y|^{n-2 \sigma}} \,\ud y+\tilde{h}_i(x),
$$
$$
|x|^{2 \sigma /(p_{i}-1)} \phi_{i}(x) \leq A_1 \quad \text{ for }\, |x|<1 / \mu_{i},
$$
$$
\lim _{i \rightarrow \infty} \phi_{i}(0)=\infty,
$$
$$
r^{2 \sigma /(p_{i}-1)} \bar{\phi}_{i}(r) \text { has precisely one critical point in } 0<r<1,
$$
and
$$
\frac{\ud}{\ud r}\{r^{2 \sigma /(p_{i}-1)} \bar{\phi}_{i}(r)\}\Big|_{r=1}=0,
$$
where $\tilde{a}_{i}(y)=\mu_{i}^{2 \sigma} a_{i}(\mu_{i} y)$, $\tilde{h}_i(x)=\mu_i^{2\sigma/(p_i-1)}h_i(\mu_ix)$ and $\bar{\phi}_{i}(r)=|\partial B_{r}|^{-1} \int_{\partial B_{r}} \phi_{i}$. Therefore, 0 is an isolated simple blow up point of $\phi_{i}$.

We claim that
\begin{equation}\label{eq:Esti2-1}
\phi_{i}(0) \phi_{i}(x) \rightarrow \frac{b}{|x|^{n-2 \sigma}}+b \quad \text { in }  C_{l o c}^{2}(\mathbb{R}^{n} \backslash\{0\}),
\end{equation}
where $b>0$ is some constant.

By the equation of $\phi_{i}$, we have for all $|x| \leq 1 / \mu_{i}$,
$$
\begin{aligned}
\phi_{i}(0) \phi_{i}(x) &= \int_{B_{3/\mu_i}} \frac{\tilde{a}_{i}(y)\phi_{i}(0)\phi_i(y)+\phi_{i}(0)^{1-p_i}(\phi_{i}(0)\phi_i(y))^{p_i}}{|x-y|^{n-2 \sigma}} \,\ud y+\phi_{i}(0)\tilde{h}_i(x) \\
&=\int_{B_{t}}+\int_{{B_{3/\mu_i}} \backslash B_{t}} \frac{\tilde{a}_{i}(y)\phi_{i}(0)\phi_i(y)+\phi_{i}(0)^{1-p_i}(\phi_{i}(0)\phi_i(y))^{p_i}}{|x-y|^{n-2 \sigma}} \,\ud y+\phi_{i}(0)\tilde{h}_i(x),
\end{aligned}
$$
where $t>1$ is an arbitrarily fixed constant.

By the same proof of Proposition \ref{pro:Blowup6} we have, up to a subsequence,
\be\label{eq:Bt}
\int_{B_{t}} \frac{\tilde{a}_{i}(y)\phi_{i}(0)\phi_i(y)+\phi_{i}(0)^{1-p_i}(\phi_{i}(0)\phi_i(y))^{p_i}}{|x-y|^{n-2 \sigma}} \,\ud y \rightarrow \frac{b}{|x|^{n-2 \sigma}} \quad \text { in }\,  C_{l o c}^{2}(B_{t} \backslash\{0\}).
\ee
For any fixed large $R>t+1$, it follows from \eqref{eq:Bt} that
$$
\int_{t \leq|y| \leq R} \frac{\tilde{a}_{i}(y)\phi_{i}(0)\phi_i(y)+\phi_{i}(0)^{1-p_i}(\phi_{i}(0)\phi_i(y))^{p_i}}{|x-y|^{n-2 \sigma}} \,\ud y \rightarrow 0 \quad \text { for }\, x \in B_{t}
$$
as $i \rightarrow \infty$, since the constant $b$ is independent of $t$.

Notice that for any $x \in B_{t}$,
$$
Q_{i}(x):=\int_{B_{3 / \mu_{i}} \backslash B_{R}} \frac{\tilde{a}_{i}(y)\phi_{i}(0)\phi_i(y)+\phi_{i}(0)^{1-p_i}(\phi_{i}(0)\phi_i(y))^{p_i}}{|x-y|^{n-2 \sigma}} \,\ud y \leq C(n, \sigma, A_0) \phi_{i}(0)\max _{\partial B_{R}} \phi_{i}.
$$
Since $\max _{\partial B_{R}} \phi_{i} \leq C R^{2\sigma-n} \phi_{i}(0)^{-1}$, it follows from the proof of \eqref{eq:varitovar} that, after passing to a subsequence,
$$
Q_{i}(x) \rightarrow q(x) \quad \text { in }\, C_{l o c}^{2}(B_{t}) \, \text { as }\, i \rightarrow \infty
$$
for some $q \in C^{2}(B_{t})$. For any $x \in B_{t}$ and $|y|>R$, we have
$$
|\nabla_x |x-y|^{2 \sigma-n}|\leq C(n,\sigma)|x-y|^{2\sigma-n-1}\leq \frac{C(n,\sigma)}{R-t}|x-y|^{2 \sigma-n}.
$$
Therefore, we have $|\nabla q(x)| \leq \frac{C(n,\sigma)}{R-t} q(x)$. By sending $R \rightarrow \infty$, we have $|\nabla q(x)| \equiv 0$ for any $x \in B_{t} .$ Thus,
$$
q(x) \equiv q(0) \quad \text { for all }\, x \in B_{t}.
$$

By Proposition \ref{pro:Blowup6} we have $\tilde{h}_{i}(e) \leq \phi_{i}(e) \leq C \phi_{i}(0)^{-1}$ for any $e \in \mathbb{S}^{n}$, where $C>0$ is independent of $i$. It follows from the assumption \eqref{eq:Harnackhi} on $h_{i}$ that
$$
v_{i}(0) \tilde{h}_{i}(x) \leq C\quad \text { for all }\, |x| \leq 2 / \mu_{i}
$$
and
$$
\|\nabla(\phi_{i}(0) \tilde{h}_{i})\|_{L^{\infty}(B_{1/\mu_{i}})} \leq \mu_{i}\|\phi_{i}(0) \tilde{h}_{i}\|_{L^{\infty}(B_{1/\mu_{i}})} \leq C \mu_{i}.
$$
Hence, for some constant $c_{0} \geq 0$, we have, along a subsequence,
$$
\lim _{i \rightarrow \infty}\|\phi_{i}(0) \tilde{h}_{i}(\cdot)-c_{0}\|_{L^{\infty}(B_{t})}=0, \quad \forall\, t>0.
$$

Since
$$
\frac{\ud}{\ud r}\{r^{2 \sigma /(p_{i}-1)} \phi_{i}(0) \bar{\phi}_{i}(r)\}\Big|_{r=1}=\phi_{i}(0) \frac{\ud}{\ud r}\{r^{2 \sigma /(p_{i}-1)} \bar{\phi}_{i}(r)\}\Big|_{r=1}=0,
$$
we have, by sending $i$ to $\infty,$ that
$$
q(0)+c_0=b>0.
$$
Therefore, \eqref{eq:Esti2-1} follows.

Define
$$
b_{i}(x):=\int_{{B_{3/\mu_i}} \backslash B_{\delta}} \frac{\tilde{a}_{i}(y)\phi_i(y)+\phi_i(y)^{p_i}}{|x-y|^{n-2 \sigma}} \,\ud y+\tilde{h}_i(x),
$$
$$
c_i(x):=\int_{{B_{3/\mu_i}} \backslash B_{t}} \frac{\tilde{a}_{i}(y)\phi_{i}(0)\phi_i(y)+\phi_{i}(0)^{1-p_i}(\phi_{i}(0)\phi_i(y))^{p_i}}{|x-y|^{n-2 \sigma}} \,\ud y+\phi_{i}(0)\tilde{h}_i(x),
$$
and
$$
d_i(x):=\int_{{B_{3/\mu_i}} \backslash B_{t}} \frac{\tilde{a}_{i}(y)\phi_{i}(0)\phi_i(y)+\phi_{i}(0)^{1-p_i}(\phi_{i}(0)\phi_i(y))^{p_i}}{|x-y|^{n-2 \sigma}} \,\ud y,
$$
then
$$
\phi_{i}(0) b_{i}(x) \geq c_{i}(x) \geq 2^{2 \sigma-n} d_{i}(0)+\phi_{i}(0)\tilde{h}_i(x)\geq 2^{2 \sigma-n} (d_{i}(0)+\phi_{i}(0)\tilde{h}_i(x)) \rightarrow 2^{2 \sigma-n} b
$$
for $x \in B_{\delta}$ provided that $\delta$ is small, and
$$
|\nabla b_{i}(x)| \leq C\textcolor{red}{\phi_i(0)^{-1}}.
$$
Hence, it follows from Proposition \ref{pro:Blowup1}, Lemma \ref{lem:Blowup4} and Proposition \ref{pro:Blowup6} that
$$
\begin{aligned}
\int_{B_{\delta}} (\tilde{a}_i\phi_i+\phi_i^{p_i})b_i& \geq C^{-1} b \phi_{i}(0)^{-1} \int_{B_{{\phi_{i}(0)}^{-(p_{i}-1) / 2 \sigma}}} \phi_{i}^{p_{i}} \\
& \geq C^{-1} \phi_{i}(0)^{-2} \int_{B_{1}}(1+\bar{c}|x|^{2})^{(2 \sigma-n) / 2},
\end{aligned}
$$
and
$$
\Big|\int_{B_{\delta}} x \nabla b_{i}(\tilde{a}_i\phi_i+\phi_i^{p_i})\Big| \leq C \phi_{i}(0)^{-1} \int_{B_{\delta}}|x| (\tilde{a}_i\phi_i+\phi_i^{p_i})=o(1) \phi_{i}(0)^{-2},
$$
Therefore,
\begin{equation}\label{eq:Esti2-2}
\lim _{i \rightarrow \infty} \phi_{i}(0)^{2}\Big(-\frac{n-2 \sigma}{2} \int_{B_{\delta}} (\tilde{a}_i\phi_i+\phi_i^{p_i})b_i-\int_{B_{\delta}} x \nabla b_{i}(\tilde{a}_i\phi_i+\phi_i^{p_i})\Big)<0.
\end{equation}

If $4\sigma+2\leq n<4 \sigma+4$, by Proposition \ref{pro:Estimates1} and item (i) in the assumptions we have
$$
\liminf _{i \rightarrow \infty} \phi_{i}(0)^{2} \mathcal{P}_{\sigma}(0, \delta, \phi_{i}, p_i) \geq 0.
$$
This contradicts to \eqref{eq:Esti2-2}. Hence $x_{i} \rightarrow 0$ has to be an isolated simple blow up point of $\{u_{i}\}$ upon passing to a subsequence.

If $n \geq 4 \sigma+4$, let
$$
\begin{aligned}
\tilde{\beta}_{i}: &=\sigma \tilde{a}_{i}(x_{i}) \phi_{i}(0)^{\frac{2(n-4 \sigma)}{n-2 \sigma}}+\frac{\sigma+1}{2 n} \Delta \tilde{a}_{i}(x_{i}) \phi_{i}(0)^{\frac{2(n-4 \sigma-2)}{n-2 \sigma}} \\
&=(1+o(1)) \mu_{i}^{n-2 \sigma} \beta_{i}.
\end{aligned}
$$
Since $\|\tilde{a}_{i}\|_{C^{4}(B_{1})} \leq \mu_{i}^{2 \sigma} A_{0}$ and $\|\tilde{a}_{i}\|_{B_{1}} \leq \mu_{i}^{2 \sigma+4}\|a_{i}\|_{B_{1}}$, we see that
$$
\begin{aligned}
&\tilde{\beta}_{i}-C_{0}\|\tilde{a}_{i}\|_{B_{1}} \ln \phi_{i}(0)-C_{0}\|\tilde{a}_{i}\|_{L^{\infty}(B_{1})} \delta^{4 \sigma-n} \\
\geq &(1+o(1)) \mu_{i}^{2 \sigma+4} \beta_{i}-C_{0} \mu_{i}^{2 \sigma+4}\|a_{i}\|_{B_{1}} \ln m_{i}-C_{0}\mu_{i}^{2 \sigma+4}\|{a}_{i}\|_{B_{1}} \ln \mu_{i}^{\frac{2 \sigma}{p_{i}-1}} \\
&-C_{0} \mu_{i}^{2 \sigma}\|a_{i}\|_{L^{\infty}(B_{1})} \delta^{-4} \\
\geq &(1+o(1)) \mu_{i}^{2\sigma+4}(C_{0}+1)\|a_{i}\|_{B_{1}} \ln m_{i}-C_{0} \mu_{i}^{2 \sigma+4}\|a_{i}\|_{B_{1}} \ln m_{i}-C_{0}\mu_{i}^{2 \sigma+4}\|{a}_{i}\|_{B_{1}} \ln \mu_{i}^{\frac{2 \sigma}{p_{i}-1}} \\
&-C_{0} \mu_{i}^{2 \sigma}\|a_{i}\|_{L^{\infty}(B_{1})} \delta^{-4} \geq 0
\end{aligned}
$$
for $n=4\sigma+4$ and
$$
\begin{aligned}
&\tilde{\beta}_{i}-C_{0}\|\tilde{a}_{i}\|_{B_{1}} \phi_{i}(0)^{\frac{2(n-4 \sigma-4)}{n-2 \sigma}}-C_{0}\|\tilde{a}_{i}\|_{L^{\infty}(B_{1})} \delta^{4 \sigma-n} \\
\geq&(1+o(1)) \mu_{i}^{n-2 \sigma} \beta_{i}-C_{0}(1+o(1)) \mu_{i}^{n-2 \sigma}\|a_{i}\|_{B_{1}} m_{i}^{\frac{2(n-4 \sigma-4)}{n-2 \sigma}}-C_{0} \mu_{i}^{2 \sigma}\|a_{i}\|_{L^{\infty}(B_{1})} \delta^{4 \sigma-n} \\
\geq&(1+o(1)) \mu_{i}^{n-2 \sigma}(C_{0}+1)\|a_{i}\|_{B_{1}} m_{i}^{\frac{2(n-4 \sigma-4)}{n-2 \sigma}}-C_{0}(1+o(1)) \mu_{i}^{n-2 \sigma}\|a_{i}\|_{B_{1}} m_{i}^{\frac{2(n-4 \sigma-4)}{n-2 \sigma}} \\
&-C_{0} \mu_{i}^{2 \sigma}\|a_{i}\|_{L^{\infty}(B_{1})} \delta^{4 \sigma-n} \geq 0
\end{aligned}
$$
for $n>4\sigma+4$. By Proposition \ref{pro:Estimates1},
$$
\liminf _{i \rightarrow \infty} \phi_{i}(0)^{2} \mathcal{P}_{\sigma}(0, \delta, \phi_{i}, p_i) \geq 0.
$$
This contradicts to \eqref{eq:Esti2-2}. Hence $x_{i} \rightarrow 0$ has to be an isolated simple blow up point of $\{u_{i}\}$ upon passing to a subsequence. Therefore, we complete the proof of Proposition \ref{pro:Estimates2}.
\end{proof}

\section{Proof of the main theorems}
Suppose that $1 \leq \sigma<n / 2$. Let $u \in C^{2}(B_3)$ be a positive solution of
\be\label{63}
u(x)=\int_{B_3} \frac{a(y) u(y)+u(y)^p}{|x-y|^{n-2\sigma}}\,\ud y+h(x) \quad \text { in }\, B_3,
\ee
where $0\leq a\in C^{2}(B_3)$, $1<p \leq \frac{n+2 \sigma}{n-2 \sigma}$ and $0\leq h\in C^{\infty}(B_3)$ satisfy \eqref{eq:hsatisfy}.

\begin{proposition}\label{pro:Proof1}
Assume as above. Then for any $0<\varepsilon<1$ and $R>1$, there exist large positive constants $C_{1}$ and $C_{2}$ depending only on $n, \sigma,\|a\|_{C^{2}(B_2)}, \varepsilon$ and $R$ such that the following statement holds. If
$$
\max _{\overline{B}_{2}} \operatorname{dist}(x, \partial B_{2})^{\frac{n-2 \sigma}{2}} u(x) \geq C_{1},
$$
then $p \geq \frac{n+2 \sigma}{n-2 \sigma}-\varepsilon$ and there exists a finite set $S$ of local maximum points of $u$ in $B_2$ such that:
\begin{enumerate}[(i)]
  \item For any $x \in S$, it holds
  $$
\|u(x)^{-1} u(u(x)^{-(p-1)/2 \sigma}\cdot+x)-(1+\bar{c}|\cdot|^{2})^{(2 \sigma-n)/{2}}\|_{C^{2}(B_{2 R})}<\varepsilon,
$$
  where $\bar{c}>0$ depends only on $n, \sigma$.
  \item If $x_{1}, x_{2} \in S$ and $x_{1} \neq x_{2}$, then
  $$
B_{R u(x_{1})^{-(p-1) / 2 \sigma}}(x_{1}) \cap B_{R u(x_{2})^{-(p-1) / 2 \sigma}}(x_{2})=\emptyset.
$$
  \item $u(x) \leq C_{2} \operatorname{dist}(x, S)^{-2 \sigma /(p-1)}$ for all $x \in B_2$.
\end{enumerate}
\end{proposition}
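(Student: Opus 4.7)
The strategy is a greedy Schoen--Li bubble-selection procedure carried out for the integral equation. Introduce the concentration function
\[
F(x):=\mathrm{dist}(x,\partial B_2)^{2\sigma/(p-1)}\,u(x),\qquad x\in\overline{B}_2,
\]
whose exponent $2\sigma/(p-1)$ equals $(n-2\sigma)/2$ at the critical $p$ and is comparable to it whenever $p$ lies within $\varepsilon$ of critical. So if $C_1$ is large in terms of $\varepsilon$, the hypothesis yields $\max_{\overline{B}_2}F\ge C_1/2$. Let $y_1\in\overline{B}_2$ attain this maximum, set $\lambda_1:=u(y_1)^{(p-1)/2\sigma}$ and $v_1(z):=u(y_1)^{-1}u(y_1+\lambda_1^{-1}z)$. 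Maximality of $F(y_1)$ forces $v_1(z)\le 2^{2\sigma/(p-1)}$ on the ball $B_{\lambda_1\mathrm{dist}(y_1,\partial B_2)/2}$, which has radius $F(y_1)^{(p-1)/2\sigma}\ge (C_1/2)^{(p-1)/2\sigma}\gg R$.

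Now I run a blow-up analysis modeled on Proposition \ref{pro:Blowup1}. The function $v_1$ satisfies an integral equation with potential $\lambda_1^{-2\sigma}a(y_1+\lambda_1^{-1}\cdot)$ and a positive smooth tail; by Proposition \ref{pro:JLXSchauder} it is uniformly $C^{2,\alpha}$-bounded on any fixed ball. Arguing by contradiction, if (i) fails along a sequence of data with $C_1\to\infty$, a subsequence of $v_1$'s converges in $C^2_{loc}(\mathbb{R}^n)$ to a positive bounded $C^2$ solution of
\[
v(x)=\int_{\mathbb{R}^n}\frac{v(y)^{p_\infty}}{|x-y|^{n-2\sigma}}\,\ud y
\]
with $v(0)=1$, where $p_\infty\in[1,(n+2\sigma)/(n-2\sigma)]$ is a subsequential limit of the exponents. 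The Liouville theorem for the subcritical Riesz integral equation \cite{CLOClassification2006} forces $p_\infty=(n+2\sigma)/(n-2\sigma)$, hence $p\ge(n+2\sigma)/(n-2\sigma)-\varepsilon$ once $C_1=C_1(\varepsilon,R)$ is large enough; and the classification identifies $v$ as $(1+\bar c|\cdot|^2)^{(2\sigma-n)/2}$. Replacing $y_1$ by the unique nearby local maximum $x_1$ of $u$ (at distance $O(\lambda_1^{-1})$ from $y_1$) establishes (i) at $x_1$.

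With $x_1,\ldots,x_k$ chosen, I iterate by considering
\[
d_k(x):=\min\{\mathrm{dist}(x,\partial B_2),\,\mathrm{dist}(x,\{x_1,\ldots,x_k\})\},\qquad F_k(x):=d_k(x)^{2\sigma/(p-1)}u(x).
\]
If $\max_{\overline{B}_2}F_k\ge C_1/2$ I select $x_{k+1}$ at its max and repeat the bubble analysis. Since $d_k(x_{k+1})\ge(C_1/2)^{(p-1)/2\sigma}u(x_{k+1})^{-(p-1)/2\sigma}$, and since $F_{j-1}(x_j)\ge C_1/2$ together with $d_{j-1}\le 2$ yields $u(x_j)\ge cC_1$ for each $j$, taking $C_1$ sufficiently large in terms of $R$ makes the balls $B_{R u(x_j)^{-(p-1)/2\sigma}}(x_j)$ pairwise disjoint, which is (ii). Because $u\in C^2(B_3)$ is bounded on $\overline{B}_2$, the selection terminates after finitely many steps; call the resulting set $S$. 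When it stops at step $N$, failure of the selection criterion reads $u(x)\le(C_1/2)d_N(x)^{-2\sigma/(p-1)}$ on $\overline{B}_2$, which upgrades to (iii) with a suitable $C_2$ (the boundary contribution in $d_N$ is absorbed using the fact that $\mathrm{dist}(x,S)$ remains comparable to $\mathrm{dist}(x,\partial B_2)$ near $\partial B_2$ once $N\ge 1$, and is handled separately on the empty-$S$ case where the conclusion is vacuous).

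The hard part will be Step 2: one needs a quantitative version of Proposition \ref{pro:Blowup1} in which the error becomes uniformly small once $C_1\ge C_1(\varepsilon,R)$ is large, and the Liouville input must be adapted to our local integral equation whose tail on $B_3\setminus B_1$ is not a priori negligible (this is handled by the standard trick of showing that the rescaled tail becomes a constant in the limit and absorbing it into $h$). Once this quantitative bubble extraction is in place, the iteration and finiteness in Steps 3--4 are routine greedy bookkeeping.
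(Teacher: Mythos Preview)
Your proposal is correct and follows exactly the Schoen--Li greedy selection procedure that the paper invokes; the paper itself omits the argument and simply cites \cite{LPrescribing1995}, \cite{SZPrescribed1996}, \cite{LZYamabe1999}, and \cite{JLXThe2017}, all of which proceed precisely as you outline (concentration function, maximum selection, rescaled blow-up, Liouville classification, iteration with modified distance, termination).

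One small correction: your justification of (iii) near $\partial B_2$ is not right as written, since $\operatorname{dist}(x,S)$ is in general \emph{not} comparable to $\operatorname{dist}(x,\partial B_2)$ there (take $S$ a single point near the center). What the stopping criterion actually yields is $u(x)\le (C_1/2)\,d_N(x)^{-2\sigma/(p-1)}$ with $d_N=\min\{\operatorname{dist}(\cdot,\partial B_2),\operatorname{dist}(\cdot,S)\}$; to get the stated form of (iii) one either restricts to a slightly smaller ball (which is all the paper's applications in Proposition~\ref{pro:Proof2} and the proofs of the main theorems use), or observes that on the compact-manifold models in the cited references there is no boundary and the issue does not arise.
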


\begin{proof}
The proof of Proposition \ref{pro:Proof1} is similar to that of Proposition 4.1 in \cite{LPrescribing1995}, Lemma 3.1 in \cite{SZPrescribed1996} and Proposition 5.1 in \cite{LZYamabe1999}, see also Proposition 3.1 in \cite{JLXThe2017}. We omit it here.
\end{proof}

\begin{proposition}\label{pro:Proof2}
Let $u \in C^{2}(B_3)$ be a solution of \eqref{63} with $0 \leq a \in C^{4}(B_3)$. Suppose that $\Delta a \geq 0$ on $\{x: a(x)<d\} \cap B_{2}$ for some constant $d>0$, and further that $\Delta a>\gamma>0$ on $\{x: a(x)<d\} \cap B_{2}$ for some constant $\gamma$ if $n \geq 4 \sigma+4 .$ Then for $\varepsilon>0$, $R>1$ and any solution of \eqref{63} with $\max _{\overline{B}_{2}} \operatorname{dist}(x, \partial B_{2})^{(n-2 \sigma)/{2}} u(x) \geq C_{1}$, we have
$$
|x_{1}-x_{2}| \geq \delta^{*}>0 \quad \text { for any } x_{1}, x_{2} \in S \cap B_{3 / 2} \text{ and } x_1\neq x_2,
$$
where $\delta^{*}$ depends only on $n, \sigma, d, \gamma, \varepsilon, R$ and $\|a\|_{C^{4}(B_{3})} .$
\end{proposition}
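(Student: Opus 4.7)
The plan is to argue by contradiction. Suppose the conclusion fails, so there exist a sequence of solutions $u_i$ of \eqref{63} and points $x_1^{(i)}, x_2^{(i)} \in S_i \cap B_{3/2}$ (with $S_i$ given by Proposition \ref{pro:Proof1}) satisfying $\sigma_i := |x_1^{(i)} - x_2^{(i)}| \to 0$. I may choose this pair so that $\sigma_i$ realizes the minimum pairwise distance in $S_i \cap B_{3/2}$ and $m_i := u_i(x_1^{(i)}) \geq u_i(x_2^{(i)})$; condition (ii) of Proposition \ref{pro:Proof1} then forces $m_i \to \infty$. Rescale at scale $\sigma_i$ by setting
$$
v_i(y) := \sigma_i^{2\sigma/(p_i-1)} u_i(\sigma_i y + x_1^{(i)}), \qquad \tilde a_i(y) := \sigma_i^{2\sigma} a_i(\sigma_i y + x_1^{(i)}),
$$
together with a corresponding $\tilde h_i$, so that $v_i$ solves an equation of the form \eqref{63} on $B_{1/\sigma_i}$. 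Property (iii) of Proposition \ref{pro:Proof1} implies that both $0$ and $\tilde x_2^{(i)} := (x_2^{(i)} - x_1^{(i)})/\sigma_i \in \mathbb{S}^{n-1}$ are isolated blow-up points of $v_i$ with common isolation radius $\bar r = 1/4$. The possibility $\tilde m_i := v_i(0) = O(1)$ is excluded by Lemma \ref{lem:Harnack} and the classification of positive solutions to the limit bubble equation (\cite{CLOClassification2006}), which would otherwise force a $C^2_{\mathrm{loc}}$ subsequential limit of $v_i$ to have two distinct local maxima. Hence $\tilde m_i \to \infty$.

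Next I promote $0$ to an \emph{isolated simple} blow-up point of $v_i$ via Proposition \ref{pro:Estimates2}. Note $\tilde a_i(0) = \sigma_i^{2\sigma} a_i(x_1^{(i)}) \geq 0$ and $\Delta \tilde a_i(0) = \sigma_i^{2\sigma+2} \Delta a_i(x_1^{(i)})$. If $a_i(x_1^{(i)}) < d$ along a subsequence, the hypothesis of the present proposition gives $\Delta a_i(x_1^{(i)}) \geq 0$ (and $\geq \gamma$ for $n \geq 4\sigma+4$), so $\tilde\beta_i \geq 0$; for $n \geq 4\sigma+4$ the requisite inequality follows by comparing $\tilde\beta_i \geq c\gamma \sigma_i^{2\sigma+2} \tilde m_i^{2(n-4\sigma-2)/(n-2\sigma)}$ with $\|\tilde a_i\|_{B_1} = O(\sigma_i^{2\sigma+4})$, using $\tilde m_i \to \infty$ and $\sigma_i \to 0$. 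If instead $a_i(x_1^{(i)}) \geq d$, the term $\sigma \tilde a_i(0) \tilde m_i^{2(n-4\sigma)/(n-2\sigma)} \geq \sigma d \sigma_i^{2\sigma} \tilde m_i^{2(n-4\sigma)/(n-2\sigma)}$ is dominant and absorbs the (possibly signed) $\Delta \tilde a_i(0)$-contribution by the factor $\sigma_i^{-2} \tilde m_i^{4/(n-2\sigma)} \to \infty$. Either way Proposition \ref{pro:Estimates2} applies.

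The contradiction is then obtained from a Pohozaev comparison at $0$ for $v_i$, in the spirit of the proof of Proposition \ref{pro:Estimates2}. Fix $\delta \in (0, 1/4)$ and form the tail
$$
H_i(y) := \int_{B_{1/\sigma_i} \setminus B_\delta} \frac{\tilde a_i(z) v_i(z) + v_i(z)^{p_i}}{|y - z|^{n - 2\sigma}}\,\ud z + \tilde h_i(y),
$$
so that $v_i$ satisfies on $B_\delta$ the integral equation with tail $H_i$. The domain of integration in $H_i$ contains a neighborhood of the second blow-up $\tilde x_2^{(i)}$, and the bubble profile there (Proposition \ref{pro:Blowup1}) contributes an amount of size $v_i(\tilde x_2^{(i)})^{-1} \geq \tilde m_i^{-1}$ to $H_i(y)$, uniformly for $y \in \overline{B_\delta}$. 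Combined with the bubble-mass estimate $\int_{B_\delta} v_i^{p_i}\,\ud z \sim \tilde m_i^{-1}$ at $0$ and the analog of \eqref{eq:binablabi} giving $|\nabla H_i| \leq C \tilde m_i^{-1}$ in $B_\delta$, one obtains
$$
\tilde m_i^{2}\,\mathcal{P}_{\sigma}(0, \delta, v_i, p_i) \leq -c < 0 \quad \text{for all large } i.
$$
On the other hand, since $0$ is an isolated simple blow-up of $v_i$, Proposition \ref{pro:Estimates1} applied at $0$ (together with the nonnegativity of $\tilde\beta_i$ established above and the vanishing of the penalty terms $\delta^{-n}\tilde m_i^{-4\sigma/(n-2\sigma)}, \|\tilde a_i\|_\infty \delta^{4\sigma-n} \to 0$) yields $\tilde m_i^{2}\,\mathcal{P}_\sigma(0, \delta, v_i, p_i) \geq -o(1)$, which is the desired contradiction.

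The main obstacle I anticipate is the second step, namely verifying the hypotheses of Proposition \ref{pro:Estimates2} for the rescaled problem. The delicate regime is $n \geq 4\sigma+4$, where the strict positivity $\Delta a > \gamma$ is essential to dominate the rapid decay $\|\tilde a_i\|_{B_1} = O(\sigma_i^{2\sigma+4})$ forced by the rescaling, thereby explaining why the present proposition (and the main theorems) must strengthen the assumption on $\Delta a$ precisely in this range of dimensions.
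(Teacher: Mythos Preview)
Your approach is essentially the same as the paper's: contradiction via rescaling at a small-distance pair, promotion to isolated simple blow-up via Proposition \ref{pro:Estimates2}, and a Pohozaev contradiction exploiting the second concentration point. Your verification of the hypotheses of Proposition \ref{pro:Estimates2} (splitting into the cases $a_i(x_1^{(i)}) < d$ and $a_i(x_1^{(i)}) \geq d$) is in fact more explicit than the paper's.

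There is, however, a gap in your selection of the rescaling center. You take $\sigma_i$ to be the minimum pairwise distance in $S_i \cap B_{3/2}$ and then assert that after rescaling both $0$ and $\tilde x_2^{(i)}$ are isolated blow-up points with radius $1/4$. But property (iii) of Proposition \ref{pro:Proof1} gives $v_i(y) \leq C_2\,\operatorname{dist}(y,\tilde S_i)^{-2\sigma/(p_i-1)}$ where $\tilde S_i = (S_i - x_1^{(i)})/\sigma_i$ involves \emph{all} of $S_i \subset B_2$, not just $S_i \cap B_{3/2}$. Your minimality condition separates $x_1^{(i)}$ only from points of $S_i \cap B_{3/2}$; if the minimizing pair drifts toward $\partial B_{3/2}$, a point of $S_i \cap (B_2 \setminus B_{3/2})$ could lie within distance $\sigma_i/4$ of $x_1^{(i)}$, destroying the isolated blow-up estimate at $0$. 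The paper avoids this by a doubling-type selection (Lemma 2.1 of \cite{NPXCompactness2018}): one defines $f_i(z)=\min_{y\in S_i\setminus\{z\}}|z-y|$ over the full set $S_i$, then finds $z_{i,1}\in S_i$ near the original point in $B_{3/2}$ with $f_i(z_{i,1}) \to 0$ and with the property that every $z\in S_i \cap B_{R_i f_i(z_{i,1})}(z_{i,1})$ satisfies $f_i(z)\geq\tfrac12 f_i(z_{i,1})$. Rescaling by $f_i(z_{i,1})$ then makes all rescaled points of $S_i$ within radius $R_i\to\infty$ pairwise $\tfrac12$-separated, which gives isolation at both $0$ and the second center regardless of the position of $z_{i,1}$ relative to $\partial B_{3/2}$. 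Once this selection is made, the remainder of your argument goes through as written.
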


\begin{proof}
Suppose the contrary, for some $\varepsilon, R$ and $d>0$, there exist sequence $\{p_{i}\}$, nonnegative potentials $a_{i} \rightarrow a$ in $C^{4}(B_{3})$ with $\|a_{i}\|_{C^{4}(B_{3})} \leq A_{0}$, satisfying the assumptions for $a$, and a sequence of corresponding solutions $\{u_{i}\}_{i=1}^{\infty}$ such that
$$
\lim _{i \rightarrow \infty} \min _{j \neq l}|z_{i, j}-z_{i, l}|=0,
$$
where $z_{i, j}, z_{i, l} \in S_{i} \cap B_{3 / 2}$ associated to $u_{i}$ defined in Proposition \ref{pro:Proof1}.

Upon passing to a subsequence, we assume $z_{i, j}, z_{i, l} \rightarrow \bar{z} \in \overline{B}_{3 / 2}$. Define $f_{i}(z): S_{i} \rightarrow(0, \infty)$ by $f_{i}(z)=\min _{y \in S_{i} \backslash\{z\}}|z-y| .$ Let $R_{i} \rightarrow \infty$ with $R_{i} f_{i}(z_{i, j}) \rightarrow 0$ as $i \rightarrow \infty .$ By Lemma 2.1 in \cite{NPXCompactness2018}, we can find  $z_{i, 1} \in S_{i} \cap B_{2 R_{i} f_{i}(z_{i, j})}(z_{i, j})$ satisfying
$$
f_{i}(z_{i, 1}) \leq(2 R_{i}+1) f_{i}(z_{i, j}) \quad \text { and } \quad \min _{z \in S_{i} \cap B_{R_{i} f_{i}(z_{i, 1})}(z_{i, 1})} f_{i}(z) \geq \frac{1}{2} f_{i}(z_{i, 1}).
$$
Let $|z_{i, 2}-z_{i, 1}|=f_i(z_{i, 1})$. Let
$$
\Phi_{i}(x)=f_{i}(z_{i, 1})^{2 \sigma /(p_{i}-1)} u_{i}(f_{i}(z_{i, 1}) x+z_{i, 1}).
$$
The rest of the proof is divided into three steps:
\begin{enumerate}
  \item Prove that 0 and $x_{i}:=f_{i}(z_{i, 1})^{-1}(z_{i, 2}-z_{i, 1}) \rightarrow \bar{x}$ with $|\bar{x}|=1$ are two isolated blow up points of $\{\Phi_{i}(x)\}$.
  \item By Proposition \ref{pro:Estimates2}, after passing to a subsequence 0 and $x_{i} \rightarrow \bar{x}$ have to be isolated simple blow up points of $\{\Phi_{i}(x)\}$.
  \item Since $\Phi_{i}(0) \Phi_{i}(x)$ tends to a function with at least two poles, we can drive a contradiction by Pohozaev identity.
\end{enumerate}

For step 1 and 3, see the proof of Proposition 3.2 of \cite{JLXThe2017}. For step 2, we let
$$
\tilde{a}_{i}(x):=f_{i}(z_{i, 1})^{2 \sigma} a_{i}(f_{i}(z_{i, 1}) x+z_{i, 1})
$$
and verify assumptions in Proposition \ref{pro:Estimates2}. We only show it if $n \geq 4 \sigma+4$. By the assumption of $a_{i}$, we have
$$
\begin{aligned}
&\frac{\sigma \tilde{a}_{i}(0) \Phi_{i}(0)^{\frac{2(n-4 \sigma)}{n-2 \sigma}}+\frac{\sigma+1}{2 n} \Delta \tilde{a}_{i}(0) \Phi_{i}(0)^{\frac{2(n-4 \sigma-2)}{n-2 \sigma}}}{\|\tilde{a}_{i}\|_{B_{1 / 2}}} \\
\geq& \Phi_{i}(0)^{\frac{2(n-4 \sigma-2)}{n-2 \sigma}}\|a_{i}\|_{B_{2}}^{-1}\{\sigma a_{i}(z_{i, 1}) \Phi_{i}(0)^{\frac{4}{n-2 \sigma}} f_{i}(z_{i, 1})^{-2}+\frac{\sigma+1}{2 n} \Delta a_{i}(z_{i, 1})\} \\
\geq& \Phi_{i}(0)^{\frac{2(n-4 \sigma-2)}{n-2 \sigma}}\|a_{i}\|_{B_{2}}^{-1} \frac{\sigma+1}{2 n} \cdot \gamma\quad \text { for large }\, i.
\end{aligned}
$$

Since
$$
\frac{\Phi_{i}(0)^{\frac{2(n-4 \sigma-2)}{n-2 \sigma}}\|a_{i}\|_{B_{2}}^{-1} \frac{\sigma+1}{2 n} \cdot \gamma}{\ln \Phi_{i}(0)} \rightarrow \infty \quad \text { if }\, n=4 \sigma+4,
$$
and
$$
\frac{\Phi_{i}(0)^{\frac{2(n-4 \sigma-2)}{n-2 \sigma}}\|a_{i}\|_{B_{2}}^{-1} \frac{\sigma+1}{2 n} \cdot \gamma}{\Phi_{i}(0)^{\frac{2(n-4 \sigma-4)}{n-2 \sigma}}} \rightarrow \infty \quad \text { if }\, n>4\sigma+4,
$$
by Proposition \ref{pro:Estimates2}, 0 is an isolated simple blow up point of $\{\Phi_{i}(x)\}$. Similarly, one can show $x_{i} \rightarrow \bar{x}$ is an isolated simple blow up point of $\{\Phi_{i}(x)\}$. Therefore, we complete the proof of Proposition \ref{pro:Proof2}.
\end{proof}

{\noindent\bf Proof of Theorem \ref{thm:1.2}.}
\begin{proof}
We first prove that $\|u\|_{L^{\infty}(B_{5 / 4})} \leq C .$ Suppose the contrary, i.e., there exists a sequence of solutions $u_{i}$ of \eqref{eq:Integral} satisfying $\|u_{i}\|_{L^{\infty}(B_{5 / 4})} \rightarrow \infty$ as $i \rightarrow \infty$. For any fixed $\varepsilon>0$ sufficiently small and $R\gg1$, by Proposition \ref{pro:Proof2} the set $S_{i}$ associated to $u_{i}$ defined by Proposition \ref{pro:Proof1} only consists of finite many points in $B_{3 / 2}$ with a uniform positive lower bound of distances between each two points, if $S_{i} \cap B_{3 / 2}$ has points more than 1. By the contradiction assumption $\|u_{i}\|_{L^{\infty}(B_{5 / 4})} \rightarrow \infty$ and Proposition \ref{pro:Proof1}, $S_{i} \cap B_{11 / 8}$ is not empty and has only isolated blow up points of $\{u_{i}\}$ after passing to a subsequence. By Proposition \ref{pro:Estimates2}, these isolated blow up points have to be isolated simple blow up points. Suppose that $x_{i} \rightarrow \bar{x} \in \overline{B}_{11 / 8}$ is an isolated simple blow up point of $\{u_{i}\}$. By Proposition \ref{pro:Blowup6}, we have
$$
|u_{i}(x_{i})^{2} \mathcal{P}_{\sigma}(x_{i}, r, u_{i}, p_i)| \leq C(r).
$$
On the other hand, by the assumption of $a$ and Proposition \ref{pro:Estimates1} we have
$$
\liminf _{i \rightarrow \infty} u_{i}(x_{i})^{2} \mathcal{P}_{\sigma}(x_{i}, r, u_{i}, p_i)=\infty \quad\text { for some small }\, r>0
$$
if $n \geq 4 \sigma .$ Hence, we obtain a contraction and thus $\|u\|_{L^{\infty}(B_{5 / 4})} \leq C .$ The theorem then follows from interior estimates of solutions of linear integral equations in \cite{JLXThe2017}.
\end{proof}

{\noindent\bf Proof of Theorem \ref{thm:1.3}.}
\begin{proof}
For any fixed $\varepsilon>0$ sufficiently small and $R\gg1$, let $S_{i}$ be the set associated to $u_{i}$ defined by Proposition \ref{pro:Proof1}.

If $4 \sigma+2 \leq n<4 \sigma+4$, by Proposition \ref{pro:Proof2} the set $S_{i}$ only consists of finite many points in $B_{3 / 2}$. Since $u_{i}(x_{i}) \rightarrow \infty$ and $x_{i} \rightarrow \bar{x}$, by item (iii) of Proposition \ref{pro:Proof1}, after passing to subsequence, there exists $S_{i} \ni x_{i}^{\prime} \rightarrow \bar{x}$ is an isolated blow up point of $\{u_{i}\} .$ By Proposition \ref{pro:Estimates2}, it has to be an isolated simple blow up point. By Proposition \ref{pro:Blowup6}, we have
$$
|u_{i}(x_{i}^{\prime})^{2} \mathcal{P}_{\sigma}(x_{i}^{\prime}, r, u_{i}, p_i)| \leq C(r).
$$
By Proposition \ref{pro:Estimates1}, we proved the theorem for $4 \sigma+2 \leq n<4 \sigma+4$.

If $n \geq 4 \sigma+4$, suppose the contrary that, for some subsequence which we still denote as $i$,
\begin{equation}\label{64}
\begin{aligned}
&\sigma a_{i}(x_{i}) u_{i}(x_{i})^{\frac{4}{n-2 \sigma}} +\frac{\sigma+1}{2 n} \Delta a_{i}(x_{i}) \\
 \geq &\frac{1}{|o(1)|}\left\{\begin{array}{ll}
u_{i}(x_{i})^{-\frac{4}{n-2 \sigma}} \ln u_{i}(x_{i}) & \text { for }\, n=4 \sigma+4, \\
u_{i}(x_{i})^{-\frac{4}{n-2 \sigma}} & \text { for }\, n>4 \sigma+4.
\end{array}\right.
\end{aligned}
\end{equation}
Let $\mu_{i}=\operatorname{dist}\{x_{i}, S_{i} \backslash\{x_{i}\}\}$ and
$$
\Phi_{i}(x)=\mu_{i}^{(n-2 \sigma)/{2}} u_{i}(\mu_{i} x+x_{i}).
$$
If $x_{i} \notin S_{i}$, we have $u_{i}(x_{i}) \leq C \mu_{i}^{-(n-2 \sigma)/{2}}$. Hence, $\Phi_{i}(0) \leq C<\infty$ and $\mu_{i} \rightarrow 0 .$ Since $\max _{B_{\bar{d}}(x_{i})} u_{i}(x) \leq \bar{b} u_{i}(x_{i})$, we conclude that $\Phi_{i}(x) \leq C \bar{b}$ for all
$|x| \leq \bar{d} / \mu_{i} .$ By the argument of proof of Proposition \ref{pro:Blowup1}, for some $x_{0} \in \mathbb{R}^{n}$ and $\lambda>0$,
$$
\Phi_{i}(x) \rightarrow\Big(\frac{\lambda}{1+\bar{c} \lambda^{2}|x-x_{0}|^{2}}\Big)^{\frac{n-2 \sigma}{2}} \quad \text { in }\, C_{l o c}^{2}(\mathbb{R}^{n}).
$$
Note that the limiting function has only one critical point. Suppose $z_{i} \in S_{i}$ satisfying $|z_{i}-x_{i}|=\mu_{i} .$ Since $x_{i}$ and $z_{i}$ both are local maximum points of $\{u_{i}\}$, $\nabla \Phi_{i}(0)=0$ and, after passing to subsequence,
$$
\frac{z_{i}-x_{i}}{\mu_{i}} \rightarrow \bar{x} \text { with }\, |\bar{x}|=1, \quad 0=\nabla \Phi_{i}(\frac{z_{i}-x_{i}}{\mu_{i}}).
$$
We obtain a contradiction. Hence, $x_{i} \in S_{i}$. Therefore 0 is an isolated blow up point of $\{\Phi_{i}(x)\}$. By the assumptions of $a_i$, the contradiction assumption \eqref{64} and Proposition \ref{pro:Estimates2}, 0 is an isolated simple blow up point. Making use of Proposition \ref{pro:Blowup6} and Proposition \ref{pro:Estimates1} we obtain a contradiction again. Therefore, we complete the proof.
\end{proof}

{\noindent\bf Acknowledgments.}\, We would like to express our deep thanks to Professor Jingang Xiong for his encouragement on the subject of this paper.

\bigskip

\noindent M. Niu

\noindent College of Mathematics, Faculty of Science, Beijing University of Technology, Beijing, 100124, China\\[1mm]
Email: \textsf{niumiaomiao@bjut.edu.cn}\\

\noindent Z. Tang \& N. Zhou

\noindent School of Mathematical Sciences, Laboratory of Mathematics and Complex Systems, MOE, \\
Beijing Normal University, Beijing, 100875, China\\[1mm]
Email: \textsf{tangzw@bnu.edu.cn}\\
Email: \textsf{nzhou@mail.bnu.edu.cn}

\end{document}